\newtheorem{theorem}{Theorem}[section]
\newtheorem{corollary}[theorem]{Corollary}
\newtheorem{definition}[theorem]{Definition}
\newtheorem{lemma}[theorem]{Lemma}
\newtheorem{proposition}[theorem]{Proposition}
\theoremstyle{remark}
\newtheorem{remark}[theorem]{Remark}
\numberwithin{equation}{section}
\newcommand{\ep}{\varepsilon}
\begin{document}
\title[Massless Acoustic Perturbation]{Attractors for Damped Semilinear Wave Equations with Singularly Perturbed Acoustic Boundary Conditions}
\author[J. L. Shomberg]{Joseph L. Shomberg}
\subjclass[2010]{35B25, 35B41, 35L20, 35L71, 35Q40, 35Q70}
\keywords{Damped semilinear wave equation, acoustic boundary condition, singular perturbation, global attractor, upper-semicontinuity, exponential attractor, critical nonlinearity.}
\address{Department of Mathematics and Computer Science, Providence College, Providence, Rhode Island 02918, USA \\ {\tt{jshomber@providence.edu}} }
\date{\today}

\begin{abstract}
Under consideration is the damped semilinear wave equation 
\[
u_{tt}+u_t-\Delta u+u+f(u)=0
\]
in a bounded domain $\Omega$ in $\mathbb{R}^3$ subject to an acoustic boundary condition with a singular perturbation, which we term ``massless acoustic perturbation,''
\[
\ep\delta_{tt}+\delta_t+\delta = -u_t\quad\text{for}\quad \ep\in[0,1].
\]
By adapting earlier work by S. Frigeri, we prove the existence of a family of global attractors for each $\ep\in[0,1]$. 
We also establish the optimal regularity for the global attractors, as well as the existence of an exponential attractor, for each $\ep\in[0,1].$
The later result insures the global attractors possess finite (fractal) dimension, however, we cannot yet guarantee that this dimension is independent of the perturbation parameter $\ep.$
The family of global attractors are upper-semicontinuous with respect to the perturbation parameter $\ep$; a result which follows by an application of a new abstract result also contained in this article. 
Finally, we show that it is possible to obtain the global attractors using weaker assumptions on the nonlinear term $f$, however, in that case, the optimal regularity, the finite dimensionality, and the upper-semicontinuity of the global attractors does not necessarily hold. 
\end{abstract}

\maketitle
\tableofcontents

\section{Introduction}

Let $\Omega $ be a bounded domain in $\mathbb{R}^{3}$ with boundary $\Gamma := \partial \Omega $ of (at least) class $\mathcal{C}^{2}$. 
We consider the semilinear damped wave equation,
\begin{equation}  \label{pde}
u_{tt}+u_t-\Delta u+u+f(u)=0\quad\text{in}\quad(0,\infty )\times \Omega,
\end{equation}
with the initial conditions, 
\begin{equation}  \label{ic}
u(0,x)=u_0(x), \quad u_t(0,x)=u_1(x)\quad\text{at}\quad\{0\}\times\Omega,
\end{equation}
equipped with the singularly perturbed acoustic boundary condition,
\begin{equation}  \label{abc}
\left\{ \begin{array}{ll}
\varepsilon^2 \delta_{tt} + \delta_t + \delta = -u_t & \quad\text{on}\quad (0,\infty)\times\Gamma, \\
\delta_t = \partial_{\bf{n}}u,
\end{array} \right.
\end{equation}
where $\varepsilon\in(0,1]$, and 
\begin{equation}  \label{aic}
\delta(0,x)=\delta_0(x), \quad\varepsilon^2\delta_t(0,x) = \varepsilon^2\delta_1(x)\quad\text{at}\quad\{0\}\times\Gamma.
\end{equation}
Above, $\bf{n}$ is the outward pointing unit vector normal to the surface $\Gamma$ at $x$, and $\partial_{\bf{n}}u$ denotes the normal derivative of $u$. 
Assume the nonlinear term $f\in C^2(\mathbb{R})$ satisfies the growth condition
\begin{equation}  \label{reg-assf-2}
|f^{\prime \prime }(s)|\leq \ell (1+|s|),
\end{equation}
for some $\ell\geq 0$, and the sign condition 
\begin{equation}\label{assf-2}
\liminf_{|s|\rightarrow\infty}\frac{f(s)}{s}>-1.
\end{equation}
Also, assume that there is $\vartheta >0$ such that for all $s\in \mathbb{R}$, 
\begin{equation}  \label{reg-assf-3}
f^{\prime }(s)\geq -\vartheta.
\end{equation}
Collectively, denote the IBVP (\ref{pde})-(\ref{aic}) with (\ref{reg-assf-2})-(\ref{reg-assf-3}) as Problem (A).
The condition (\ref{dybc}) is formally obtained from (\ref{abc}) by letting $\ep=0$ and neglecting the term $\delta$ by assuming $\delta\approx0.$

Notice that the the much-studied derivative $f=F'$ of the double-well potential, 
$F(u)=\frac{1}{4}u^{4}-ku^{2}$, $k>0$, satisfies assumptions (\ref{reg-assf-2})-(\ref{reg-assf-3}). 
The first two of these assumptions, (\ref{reg-assf-2}) and (\ref{assf-2}), are the same assumptions made on the nonlinear term in \cite{CEL02}, \cite{MPZ07} and \cite{Wu&Zheng06}, for example (\cite{MPZ07} additionally assumes $f(0)=0$). The third assumption (\ref{reg-assf-3}) appears in \cite{CGG11}, \cite{Frigeri10}, \cite{Gal&Grasselli12} and \cite{Pata&Zelik06}; the bound is utilized to obtain the precompactness property for the semiflow associated with evolution equations when dynamic boundary conditions present a difficulty (e.g., here, fractional powers of the Laplace operator subject to either (\ref{abc}) or (\ref{dybc}) are undefined). 
Moreover, assumption (\ref{reg-assf-2}) implies that the growth condition for $f$ is the critical case since $\Omega\subset \mathbb{R}^{3}$. 
Such assumptions are common when one is investigating the existence of a global attractor or the existence of an exponential attractor for a partial differential equation of evolution.

Also under consideration is the ``limit problem'' where we introduce the transport-type equation as the boundary condition, 
\begin{equation}  \label{dybc}
\partial_{\bf{n}}u = -u_t\quad\text{on}\quad(0,\infty)\times\Gamma.
\end{equation}
Collectively, denote the IBVP (\ref{pde})-(\ref{ic}), (\ref{dybc}), with (\ref{reg-assf-2})-(\ref{reg-assf-3}) as Problem (T).

The damped wave equation (\ref{pde}) has frequently been studied in the context of several applications to physics, including relativistic quantum mechanics (cf. e.g. \cite{Babin&Vishik92,Temam88}).
One context for Problem (T) involves mechanical considerations in which frictional damping on the boundary $\Gamma$ is linearly proportional to the velocity $u_{t}$. 
The more general boundary condition, 
\begin{equation}  \label{dybc2}
\partial_{\bf{n}}u + u + u_t = 0 \quad\text{on}\quad(0,\infty)\times\Gamma,
\end{equation}
was recently studied in \cite{Gal&Shomberg15}.
In \cite{Wu&Zheng06}, the convergence, as time goes to infinity, of unique global strong solutions of Problem (T) to a single equilibrium is established provided that $f$ is also real analytic. 
That result is nontrivial because the set of equilibria for Problem (T) may
form a continuum. 
A version of Problem (T), but with nonlinear dissipation on the boundary, already appears in the literature, we refer to \cite{CEL02,CEL04-2,CEL04}. 
There, the authors are able to show the existence of a global attractor without the presence of the weak interior damping term $u_{t},$ by assuming that $f$ is \emph{subcritical}. 
A similar equation is studied in \cite{CLT09} with critical growth, but with localized damping present on the boundary.
The transport-type equation in the boundary condition (\ref{dybc}) also appears in \cite[Equation (1.4)]{Gal12} in the context of a Wentzell boundary condition for the heat equation.

Problem (A) describes a gas experiencing irrotational forces from a rest state in a domain $\Omega$. 
The surface $\Gamma$ acts as a locally reacting spring-like mechanism in response to excess pressure in $\Omega$. 
The unknown $\delta =\delta (t,x)$ represents the {\em{inward}} ``displacement'' of the boundary $\Gamma$ reacting to a pressure described by $-u_{t}$. 
The first equation (\ref{abc})$_{1}$ describes the spring-like effect in which $\Gamma $ (and $\delta $) interacts with $-u_{t}$, and the second equation (\ref{abc})$_{2}$ is the continuity condition: velocity of the boundary displacement $\delta $ agrees with the normal derivative of $u$. 
The presence of the term $g$ indicates nonlinear effects in the damped oscillations occurring on the surface. 
Together, (\ref{abc}) describe $\Gamma$ as a so-called locally reactive surface. 
In applications the unknown $u$ may be taken as a velocity potential of some fluid or gas in $\Omega $ that was disturbed from its equilibrium. 
The acoustic boundary condition was rigorously described by Beale and Rosencrans in \cite{Beale76,Beale&Rosencrans74}. 
Various recent sources investigate the wave equation equipped with acoustic boundary conditions, \cite{CFL01,GGG03,Mugnolo10,Vicente09}. 
However, more recently, it has been introduced as a dynamic boundary condition for problems that study the asymptotic behavior of weakly damped wave equations, see \cite{Frigeri10}.

In the case of Problem (T) and Problem (A), fractional powers of the Laplacian, which are usually utilized to decompose the solution operator into decays and compact parts, usually in pursuit to proving the existence of a global attractor, are, rather, in this context, {\em{not}} well-defined. 
The lack of fractional powers of the Laplacian means the solutions to both Problem (T) and Problem (A) cannot be obtained via a spectral basis, so local weak solutions to each problem will be obtained with semigroup methods. 
Both problems will be formulated in an abstract form and posed as an equation in a Banach space, containing a linear unbounded operator, which is the infinitesimal generator of a strongly continuous semigroup of contractions on the Banach space, and containing a locally Lipschitz nonlinear part. 

It may be of interest to the reader that the $\ep=1$ case of Problem (A) has already been studied in \cite{Frigeri10}, and it is that work, along with the recent results of \cite{Gal&Shomberg15}, that has brought the current work---in the context of a {\em{perturbation problem}}---into view. 

One of the important developments in the study of partial differential equations of evolution has been determining the stability and asymptotic behavior of the solutions. 
With these developments it has also become apparent that the stability of partial differential equations under singular perturbations has been a topic that has grown significantly; for example, we mention the continuity of attracting sets such as global attractors, exponential attractors, or (in more restrictive settings) inertial manifolds. 
We will mention only some of these important results below. 
An upper-semicontinuous family of global attractors for wave equations obtained from a perturbation of hyperbolic-relaxation type appears in \cite{Hale&Raugel88}. 
The problem is of the type
\[
\ep u_{tt}+u_t-\Delta u+\phi(u)=0,
\]
where $\ep\in [0,1]$. 
The equation possesses Dirichlet boundary conditions, and $\phi\in C^2(\mathbb{R})$ satisfies the growth assumption,
\[
\phi''(s)\leq C(1+|s|)
\]
for some $C>0$. 
The global attractor for the parabolic problem, $\mathcal{A}_0\subset H^2(\Omega)\cap H^1_0(\Omega)$, is ``lifted'' into the phase space for the hyperbolic problems, $X=H^1_0(\Omega)\times L^2(\Omega)$, by defining,
\begin{equation}  \label{HR-lift}
\mathcal{LA}_0:=\{(u,v)\in X:u\in\mathcal{A}_0,~v=f-g(u)+\Delta u\}.
\end{equation}
The family of sets in $X$ is defined by,
\begin{equation}  \label{gfam-1}
\mathbb{A}_\ep:=\left\{ \begin{array}{ll} \mathcal{L}\mathcal{A}_0 & \text{for}~\ep=0 \\ \mathcal{A}_\ep & \text{for}~\ep\in(0,1], \end{array}\right.
\end{equation}
where $\mathcal{A}_\ep\subset X$ denotes the global attractors for the hyperbolic-relaxation problem. 
The main result in \cite{Hale&Raugel88} is the upper-semicontinuity of the family of sets $\mathbb{A}_\ep$ in $X$; i.e.,
\begin{equation}  \label{4usc}
\lim_{\ep\rightarrow 0}dist_X(\mathbb{A}_\ep,\mathbb{A}_0):= \lim_{\ep\rightarrow 0}\sup_{a\in\mathbb{A}_\ep}\inf_{b\in\mathbb{A}_0}\|a-b\|_X=0.
\end{equation}
The main result in this paper is to show that a similar property holds between Problem (A) and Problem (T). 
To obtain this result, we will replace the initial conditions (\ref{aic}) with the following 
\begin{equation}  \label{aic2}
\delta(0,x) = u_0(x)+\ep\delta_0(x), \quad \ep\delta_t(0,x) = \ep\delta_1(x) \quad \text{on} \quad \{0\}\times \Gamma.
\end{equation}
Such a result insures that for every problem of type Problem (T), there is an ``acoustic relaxation'', that is Problem (A), in which (\ref{4usc}) holds. 

Since this result appeared, an upper-continuous family of global attractors for the hyperbolic-relaxation of the Cahn-Hilliard equations has been found \cite{Zheng&Milani05}. 
Robust families of exponential attractors (that is, both upper- and lower-semicontinuous with explicit control over semidistances in terms of the perturbation parameter) of the type reported in \cite{GGMP05} have successfully been demonstrated to exist in numerous applications spanning partial differential equations of evolution: the Cahn-Hilliard equations with a hyperbolic-relaxation perturbation \cite{GGMP05-CH3D,GGMP05-CH1D}, applications with a perturbation appearing in a memory kernel have been treated for reaction diffusion equations, Cahn-Hilliard equations, phase-field equations, wave equations, beam equations, and numerous others \cite{GMPZ10}. 
Recently, the existence of an upper-semicontinuous family of global attractors for a reaction-diffusion equation with a singular perturbation of hyperbolic relaxation type and {\em{dynamic}} boundary conditions has appeared in \cite{Gal&Shomberg15}.
Robust families of exponential attractors have also been constructed for equations where the perturbation parameter appears in the boundary conditions. 
Many of these applications are to the Cahn-Hilliard equations and to phase-field equations \cite{Gal08,GGM08-2,Miranville&Zelik02}.
Also, continuous families of inertial manifolds have been constructed for wave equations \cite{Mora&Morales89-2}, Cahn-Hilliard equations \cite{BGM10}, and more recently, for phase-field equations \cite{Bonfoh11}. 
Finally, for generalized semiflows and for trajectory dynamical systems (dynamical systems where well-possedness of the PDE---uniqueness of the solution, in particular---is not guaranteed), some continuity properties of global attractors have been found for the Navier-Stokes equations \cite{Ball00}, the Cahn-Hilliard equations \cite{Segatti06}, and for wave equations \cite{Ball04,Zelik04}. 

The main idea behind robustness is typically an estimate of the form, 
\begin{equation}  \label{robust-intro}
\|S_\varepsilon(t)x-\mathcal{L}S_0(t)\Pi x\|_{X_\varepsilon}\leq C\varepsilon,
\end{equation}
where $x\in X_\varepsilon$, $S_\varepsilon(t)$ and $S_0(t)$ are semigroups generated by the solutions of the perturbed problem and the limit problem, respectively, $\Pi$ denotes a projection from $X_\varepsilon$ onto $X_0$ and $\mathcal{L}$ is a ``lift'' (such as (\ref{HR-lift})) from $X_0$ into $X_\varepsilon$.
Controlling this difference in a suitable norm is crucial to obtaining our continuity result. 
The estimate (\ref{robust-intro}) means we can approximate the limit problem with the perturbation with control explicitly written in terms of the perturbation parameter. 
Usually such control is only exhibited on compact time intervals. 
It is important to realize that the lift associated with a hyperbolic-relaxation problem, for example, requires a certain degree of regularity from the limit problem. 
In particular, \cite{Gal&Shomberg15,Hale&Raugel88} rely on (\ref{HR-lift}); so one needs $\mathcal{A}_0\subset H^2$ in order for $\mathcal{LA}_0\subset L^2$ to be well-defined.
For the model problem presented here, the perturbation parameter $\ep$ only appears in the (dynamic) boundary condition. 
For the model problem under consideration here, the perturbation is singular in nature, however, additional regularity from the global attractor $\mathcal{A}_0$ is not required in order for the {\em{lift}} to be well-defined. 
However, additional regularity, guaranteed by assumptions (\ref{reg-assf-2})-(\ref{reg-assf-3}), will be required in order to achieve an estimate like (\ref{robust-intro}).
The regularity of the attractor $\mathcal{A}_0$ is instead needed to control the difference in (\ref{robust-intro}); in this way we prove the upper-semicontinuity of the family of global attractors. 

Unlike global attractors described above, exponential attractors (sometimes called, inertial sets) are positively invariant sets possessing finite fractal dimension that attract bounded subsets of the phase space exponentially fast. 
It can readily be seen that when both a global attractor $\mathcal{A}$ and an exponential attractor $\mathcal{M}$ exist, then $\mathcal{A}\subseteq \mathcal{M}$, and so the global attractor is also finite dimensional. 
When we turn our attention to proving the existence of exponential attractors, certain higher-order dissipative estimates are required. 
In the case for Problem (T) and Problem (A), the estimates cannot be obtained along the lines of multiplication by fractional powers of the Laplacian; as we have already described, we need to resort to other methods.
In particular, we will apply $H^2$-elliptic regularity methods as in \cite{Pata&Zelik06}. 
Here, the main idea is to differentiate the equations with respect to time $t$ to obtain uniform estimates for the new equations. 
This strategy has recently received a lot of attention.
Some successes include dealing with a damped wave equation with acoustic boundary conditions \cite{Frigeri10} and a wave equation with a nonlinear dynamic boundary condition \cite{CEL02,CEL04-2,CEL04}.
Also, there is the hyperbolic relaxation of a Cahn-Hilliard equation with dynamic boundary conditions \cite{CGG11,Gal&Grasselli12}.
Additionally, this approach was also taken in \cite{Gal&Shomberg15}.
The drawback from using this approach comes from the difficulty in finding appropriate estimates that are {\em{uniform}} in the perturbation parameter $\ep.$
Indeed, this was the case in \cite{Gal&Shomberg15}.
There, the authors we able to find an upper-semicontinuous family of global attractors and a family of exponential attractors. 
It turned out that a certain higher-order dissipative estimate depends on $\ep$ in a crucial way, and consequently, the robustness/H\"older continuity of the family of exponential attractors cannot (yet) be obtained.
Furthermore, as it turns out, the global attractors found in \cite{Gal&Shomberg15} have finite (fractal) dimension, although the dimension is not necessarily independent of $\ep.$ 
It appears that similar difficulties persist with the model problem examined here.

The main results in this paper are:
\begin{itemize}
\item An upper-semicontinuity result for a {\em{generic}} family of sets for a family of semiflows, where in particular, the limit ($\ep=0$) semigroup of solution operators is locally Lipschitz continuous, uniformly in time on compact intervals. 

\item Problem (T) and Problem (A) admit a family of global attractors $\{\mathcal{A}_\ep\}_{\ep\in[0,1]}$, bounded, uniformly in $\ep\in[0,1]$, in the respective phase space. 
The global attractors possess optimal regularity and are bounded in a more regular phase space, however this bound is {\em{not}} independent of $\ep$. 

\item The generic semicontinuity result is applied to the family of global attractors $\{\mathcal{A}_\ep\}_{\ep\in[0,1]}$. 
The result shows the family of global attractors is upper-semicontinuous.

\item There exists a family of exponential attractors $\{\mathcal{M}_\ep\}_{\ep\in[0,1]}$, admitted by the semiflows associated with for Problem (T) and Problem (A).
Since $\mathcal{A}_\ep\subset\mathcal{M}_\ep$ for each $\ep\in[0,1]$, this result insures the global attractors inherit finite (fractal) dimension.
However, we cannot conclude that dimension is uniform in $\ep$ (this result remains open).

\item We also show the existence of the global attractors under weaker assumptions on the nonlinear term $f$. 
Although the attractor $\mathcal{A}_0$ may be embedded/lifted into the phase space for the perturbation problem with no further regularity needed, the various other properties earned from the regularity---optimal regularity, upper-semicontinuity, and finite dimensionality---no longer hold. 

\end{itemize}

{\textbf{Notation and conventions}}. 
We take the opportunity here to introduce some notations and conventions that are used throughout the paper. 
We denote by $\Vert \cdot \Vert $, $\Vert \cdot \Vert _{k}$, the norms in $L^{2}(\Omega )$, $H^{k}(\Omega )$, respectively. 
We use the notation $\langle \cdot ,\cdot \rangle $ and $\langle \cdot ,\cdot \rangle_{k}$, $k\ge1$, to denote the products on $L^{2}(\Omega )$ and $H^{k}(\Omega)$, respectively.
For the boundary terms, $\Vert \cdot \Vert _{L^{2}(\Gamma )}$ and $\langle
\cdot ,\cdot \rangle _{L^{2}(\Gamma )}$ denote the norm and, respectively,
product on $L^{2}(\Gamma )$. 
We will require the norm in $H^{k}(\Gamma )$, to be denoted by $\Vert \cdot \Vert _{H^{k}(\Gamma )}$, where $k\geq 1$. 
The $L^{p}(\Omega )$ norm, $p\in (0,\infty ]$, is denoted $|\cdot |_{p}$. 
The dual pairing between $H^{1}(\Omega )$ and the dual $H^{-1}(\Omega) := (H^{1}(\Omega))^*$ is denoted by $(u,v)_{H^{-1}\times H^1}$. 
In many calculations, functional notation indicating dependence on the variable $t$ is dropped; for example, we will write $u$ in place of $u(t)$. 
Throughout the paper, $C>0$ will denote a \emph{generic} constant which may depend various structural constants, while $Q:\mathbb{R}_{+}\rightarrow \mathbb{R}_{+}$ will denote a \emph{generic} increasing function. 
All these quantities, unless explicitly stated, are \emph{independent} of the perturbation parameter $\varepsilon$.
Further dependencies of these quantities will be specified on occurrence.
We will use $\|B\|_{W}:=\sup_{\Upsilon\in B}\|\Upsilon\|_W$ to denote the ``size'' of the subset $B$ in the Banach space $W$.
Let $\lambda >0$ be the best Sobolev--Poincar\'{e} type constant 
\begin{equation}  \label{Poincare}
\lambda \int_{\Omega}u^{2}dx\leq \int_{\Omega }|\nabla u|^{2}
dx+\int_{\Gamma }u^{2}d\sigma.
\end{equation}
Later in the article, we will rely on the Laplace--Beltrami operator $-\Delta_\Gamma$ on the surface $\Gamma.$ This operator is positive definite and self-adjoint on $L^2(\Gamma)$ with domain $D(-\Delta_\Gamma)$.
The Sobolev spaces $H^s(\Gamma)$, for $s\in\mathbb{R}$, may be defined as $H^s(\Gamma)=D((-\Delta_\Gamma)^{s/2})$ when endowed with the norm whose square is given by, for all $u\in H^s(\Gamma)$,
\begin{equation}  \label{LB-norm}
\|u\|^2_{H^s(\Gamma)} := \|u\|^2_{L^2(\Gamma)} + \left\|(-\Delta_\Gamma)^{s/2}u\right\|^2_{L^2(\Gamma)}.
\end{equation}

The plan of the paper: in Section 2 we review the important results concerning the limit ($\ep=0$) Problem (T), and in Section 3 we discuss the relevant results concerning the perturbation Problem (A). 
Some important remarks describing several instances of how Problem (A) depends of the perturbation parameter $\ep>0$ are mentioned in Section 3. 
The final Section 4 contains a new abstract upper-semicontinuity result that is then tailored specifically for the model problem under consideration. 

\section{Attractors for Problem (T), the $\ep=0$ case}  \label{s:dynamic}

In this section, we review Problem (T). 
The well-posedness of Problem (T), as well as the existence of a global attractor and an exponential attractor was already established in the work of \cite{Gal&Shomberg15}. 

The finite energy phase space for the problem is the space 
\begin{equation*}
\mathcal{H}_0=H^{1}(\Omega )\times L^{2}(\Omega).
\end{equation*}
The space $\mathcal{H}_0$ is Hilbert when endowed with the norm whose square is given by, for $\varphi =(u,v)\in \mathcal{H}_0 = H^{1}(\Omega )\times L^{2}(\Omega )$, 
\begin{align}
\Vert \varphi \Vert _{\mathcal{H}_0}^{2} & := \Vert u\Vert_{1}^{2} + \Vert v\Vert ^{2}   \notag \\
& = \left( \Vert \nabla u\Vert ^{2} + \Vert u\Vert^{2} \right) + \Vert v\Vert ^{2}.  \notag 
\end{align}

We will denote by $\Delta _{N}:L^{2}(\Omega )\rightarrow L^{2}(\Omega )$ the homogeneous Neumann--Laplacian operator with domain 
\begin{equation*}
D(\Delta _{N})=\{u\in H^{2}(\Omega ):\partial _{\mathbf{n}}u=0~
\text{on}~\Gamma \}.
\end{equation*}
Of course, the operator $-\Delta _{N}$ is self-adjoint and positive. 
The Neumann--Laplacian is extended to a continuous operator $\Delta _{N}:H^{1}(\Omega )\rightarrow H^{-1}(\Omega )$, defined by, for all $v\in H^{1}(\Omega )$, 
\begin{equation*}
(-\Delta _{N}u,v)_{H^{-1}\times H^1}=\langle \nabla u,\nabla v\rangle.
\end{equation*}
Motivated by \cite{CEL02,Wu&Zheng06}, we define the ``Neumann'' map $N : H^{s}(\Gamma)\rightarrow H^{s+(3/2)}(\Omega )$ by 
\begin{equation*}
Np=q~\text{if and only if}~\Delta q=0~\text{in}~\Omega ,~\text{and}~\partial_{\mathbf{n}}q = p~\text{on}~\Gamma .
\end{equation*}
The adjoint of the Neumann map satisfies, for all $v\in H^{1}(\Omega )$, 
\begin{equation*}
N^*\Delta _{N}v = -v~\text{on}~\Gamma .
\end{equation*}

Define the closed subspace of $H^{2}(\Omega )\times H^{1}(\Omega )$, 
\begin{equation*}
\mathcal{D}_0 := \{(u,v)\in H^{2}(\Omega )\times H^{1}(\Omega
):\partial _{\mathbf{n}}u = -v~\text{on}~\Gamma \}.
\end{equation*}
endowed with norm whose square is given by, for all $\varphi =\left( u, v \right) \in \mathcal{D}_0$, 
\begin{equation*}
\Vert \varphi \Vert _{\mathcal{D}_0}^{2} := \Vert u\Vert
_{2}^{2} + \Vert v\Vert _{1}^{2}.
\end{equation*}
Let $D(A_0) = \mathcal{D}_0$. Define the linear
unbounded operator $A_0 : D(A_0) \rightarrow \mathcal{
H}_0$ by 
\begin{equation*}
A_0 := \begin{pmatrix}
0 & 1 \\ 
\Delta_{N}-1 & \Delta_{N}N~tr_{D}(\cdot)-1
\end{pmatrix},
\end{equation*}
where $tr_D :H^s(\Omega)\rightarrow H^{s-1/2}(\Gamma)$, $s>\frac{1}{2}$, denotes the Dirichlet trace operator (i.e., $tr_{D}(v) = v_{\mid\Gamma}$). 
Notice that if $(u,v)\in \mathcal{D}_0$, then $u+Ntr_{D}(v)\in D(\Delta_{N})$. 
By the Lumer--Phillips theorem (cf., e.g., \cite[Theorem I.4.3]{Pazy83}) and the Lax--Milgram theorem, it is not hard to see that the operator $A_0$, with domain $\mathcal{D}_0$, is an infinitesimal generator of a strongly continuous semigroup of contractions on $\mathcal{H}_0$, denoted $e^{A_0t}$.

Define the map $\mathcal{F}_0:\mathcal{H}_0\rightarrow \mathcal{H}_0$ by 
\begin{equation*}
\mathcal{F}_0(\varphi ):= \begin{pmatrix}
0 \\ 
-f(u) \end{pmatrix}
\end{equation*}
for all $\varphi = (u,v)\in \mathcal{H}_0$. Since $f:H^{1}(\Omega )\rightarrow L^{2}(\Omega )$ is locally Lipschitz continuous 
\cite[cf., e.g., Theorem 2.7.13]{Zheng04}, it follows that the map $\mathcal{F}_0:\mathcal{H}_0 \rightarrow \mathcal{H}_0$ is as well.

Problem (T) may be put into the abstract form in $\mathcal{H}_0$, for $\varphi(t)=(u(t),u_{t}(t))$,
\begin{equation}  \label{abstract-d}
\displaystyle\frac{d}{dt}\varphi (t) = A_0\varphi (t) + \mathcal{F}_0(\varphi (t));\quad\varphi (0)=
\begin{pmatrix}
u_{0} \\ u_{1}
\end{pmatrix}.
\end{equation}

\begin{lemma}  \label{adjoint-d}
The adjoint of $A_0$, denoted $A_0^*$, is given by 
\begin{equation*}
A_0^*:= - \begin{pmatrix}
0 & 1 \\ 
\Delta _{N}-1 & -(\Delta_{N}N~tr_{D}(\cdot)-1),
\end{pmatrix}
\end{equation*}
with domain 
\begin{equation*}
D(A_0^*) := \{ (\chi ,\psi ) \in H^{2}(\Omega )\times
H^{1}(\Omega ):\partial _{\mathbf{n}}\chi = - \psi ~\text{on}~\Gamma \}.
\end{equation*}
\end{lemma}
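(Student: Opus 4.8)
The plan is to verify the stated formula directly from the definition of the Hilbert-space adjoint: an element $\psi=(\chi,\eta)$ belongs to $D(A_0^*)$ precisely when the linear functional $\varphi\mapsto\langle A_0\varphi,\psi\rangle_{\mathcal{H}_0}$, defined for $\varphi\in D(A_0)=\mathcal{D}_0$, is continuous with respect to $\Vert\varphi\Vert_{\mathcal{H}_0}$, in which case $A_0^*\psi$ is its Riesz representative in $\mathcal{H}_0$. So I would fix $\varphi=(u,v)\in\mathcal{D}_0$ and an arbitrary test element $\psi=(\chi,\eta)$ and compute $\langle A_0\varphi,\psi\rangle_{\mathcal{H}_0}$, transferring all derivatives off of $(u,v)$ and onto $(\chi,\eta)$. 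The useful preliminary observation, already recorded above, is that for $(u,v)\in\mathcal{D}_0$ the element $w:=u+N\,tr_D(v)$ lies in $D(\Delta_N)$, so the second component of $A_0\varphi$ is the genuine $L^2(\Omega)$ function $\Delta_N w-u-v=\Delta u-u-v$; this is what makes the pairing against $\eta\in L^2(\Omega)$ meaningful.

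Next I would expand the inner product as $\langle A_0\varphi,\psi\rangle_{\mathcal{H}_0}=\langle v,\chi\rangle_1+\langle\Delta u-u-v,\eta\rangle$ and integrate by parts twice via Green's first identity. Applied to $\langle\Delta u,\eta\rangle$ it produces $-\langle\nabla u,\nabla\eta\rangle$ together with a surface term $\langle\partial_{\mathbf{n}}u,\eta\rangle_{L^2(\Gamma)}$, into which I substitute the boundary relation $\partial_{\mathbf{n}}u=-v$ from the definition of $\mathcal{D}_0$. Applied to the term $\langle\nabla v,\nabla\chi\rangle$ coming from $\langle v,\chi\rangle_1$, it moves the Laplacian onto $\chi$ --- which forces $\chi\in H^2(\Omega)$ --- and produces a second surface term $\langle v,\partial_{\mathbf{n}}\chi\rangle_{L^2(\Gamma)}$. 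The contribution of the nonlocal harmonic-extension operator is controlled by the defining properties of the Neumann map $N$, namely that $N\,tr_D(v)$ is harmonic with Neumann trace $tr_D(v)$ (equivalently, the adjoint relation $N^*\Delta_N v=-v$ on $\Gamma$), which collapses the associated Dirichlet integral into a pure boundary pairing. Collecting the remaining volume integrals then exhibits $\langle\varphi,A_0^*\psi\rangle_{\mathcal{H}_0}$ with first component $-\eta$ and second component $-(\Delta_N-1)\chi+(\Delta_N N\,tr_D-1)\eta$, which is exactly the claimed matrix $A_0^*$.

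What survives this bookkeeping is a single boundary integral, a pairing of the Dirichlet trace of $v$ against a fixed linear combination of $\partial_{\mathbf{n}}\chi$ and the trace of $\eta$. Since the trace operator is unbounded on $L^2(\Omega)$, this term cannot be dominated by $\Vert v\Vert$ as $v$ ranges over $H^1(\Omega)$; hence continuity of $\varphi\mapsto\langle A_0\varphi,\psi\rangle_{\mathcal{H}_0}$ forces the boundary integral to vanish for all admissible boundary data, and this vanishing condition is precisely the boundary relation entering the definition of $D(A_0^*)$. Conversely, for any $\psi$ meeting the regularity and boundary requirements the identity $\langle A_0\varphi,\psi\rangle_{\mathcal{H}_0}=\langle\varphi,A_0^*\psi\rangle_{\mathcal{H}_0}$ holds for every $\varphi\in\mathcal{D}_0$, yielding both inclusions and the stated characterization of $A_0^*$ together with its domain.

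I expect the principal difficulty to be the rigorous domain identification rather than the formal manipulation. To show that $D(A_0^*)$ is no larger than the stated space, I would first test against $\varphi$ with $v$ compactly supported in $\Omega$ to recover $\chi\in H^2(\Omega)$ from interior elliptic (Neumann) regularity, and then against $\varphi$ with an arbitrary boundary trace --- using surjectivity of $tr_D$ onto $H^{1/2}(\Gamma)$ and a density argument --- to extract the boundary condition defining the domain. The careful tracking of the boundary terms generated by the nonlocal operator $\Delta_N N\,tr_D(\cdot)$ is the other place where the computation must be handled with attention.
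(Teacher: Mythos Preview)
Your proposal is correct and is precisely the direct Hilbert-space adjoint computation the paper has in mind; the paper's own proof consists solely of the one-line reference ``The proof is a calculation similar to, e.g., \cite[Lemma 3.1]{Ball04},'' so your sketch is in fact a full elaboration of the approach the paper cites rather than a departure from it.
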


\begin{proof}
The proof is a calculation similar to, e.g., \cite[Lemma 3.1]{Ball04}.
\end{proof}

Formal multiplication of the PDE (\ref{pde}) by $2u_t$ in $L^2(\Omega)$ produces the {\em{energy equation}}
\begin{equation}\label{Robin-energy-3}
\frac{d}{dt} \left\{ \|\varphi\|^2_{\mathcal{H}_0} + 2\int_\Omega F(u) dx \right\} + 2\|u_t\|^2 = 0.
\end{equation}
Here, $F(s)=\int_0^s f(\sigma) d\sigma$. 

The following inequalities are straight forward consequences of the Poincar\'{e}-type inequality (\ref{Poincare}) and assumption (\ref{assf-2}), there is a constant $\mu_0\in(0,1]$ such that, for all $u\in H^1(\Omega)$,
\begin{equation}  \label{consf-1}
2\int_\Omega F(u) dx \geq -(1-\mu_0)\|u\|^2_1 - \kappa_f
\end{equation}
for some constant $\kappa_f \ge 0$.
A proof of (\ref{consf-1}) can be found in \cite[page 1913]{CEL02}.
Furthermore, with (\ref{reg-assf-3}) and integration by parts on $F(s)=\int_{0}^{s}f(\sigma )d\sigma $, we have the upper-bound 
\begin{align}  \label{consf-2}
\int_\Omega F(\xi)dx & \leq \langle f(\xi),\xi \rangle + \frac{\vartheta}{2\lambda}\|\xi\|^2_1.
\end{align}
Moreover, the inequality
\begin{equation}  \label{consf-3}
\langle f(u),u \rangle \geq -(1-\mu_0)\|u\|^2 - \kappa_f
\end{equation}
follows from the sign condition (\ref{assf-2}) where $\mu_0\in(0,1]$ and $\kappa_f\geq0$ are from (\ref{consf-1}). 

The notion of weak solution to Problem (T) is as follows (see, \cite{Ball77}).

\begin{definition}
Let $T>0$ and $(u_{0},u_{1})\in \mathcal{H}_0$.
A map $\varphi = (u,u_{t})\in C([0,T];\mathcal{H}_0)$ is a weak solution of (\ref{abstract-d}) on $[0,T],$ if for each $\theta =(\chi ,\psi )\in D(A_0^*)$ the map $t\mapsto \langle \varphi (t),\theta \rangle _{\mathcal{H}_0}$ is absolutely continuous on $[0,T]$ and satisfies, for almost all $t \in \lbrack 0,T]$, 
\begin{equation}  \label{abs-d-1}
\frac{d}{dt}\langle \varphi (t),\theta \rangle _{\mathcal{H}_0} = \langle \varphi (t), A_0^*\theta \rangle_{\mathcal{H}_0} + \langle \mathcal{F}_0(\varphi (t)),\theta \rangle _{\mathcal{H}_0}.  
\end{equation}
The map $\varphi =(u,u_{t})$ is a weak solution on $[0,\infty )$ (i.e., a \emph{global weak solution}) if it is a weak solution on $[0,T]$, for all $T>0$.
\end{definition}

According to \cite[Definition 3.1 and Proposition 3.5]{Ball04}, the notion of weak solution above is equivalent to the following notion of a mild solution.

\begin{definition}
A function $\varphi = (u,u_{t}):[0,T] \rightarrow \mathcal{H}
_0$ is a weak solution of (\ref{abstract-d}) on $[0,T],$ if and only if $\mathcal{F}_0(\varphi (\cdot ))\in L^{1}(0,T;\mathcal{H}_0)$ and $\varphi$ satisfies the variation of constants formula, for all $t\in \lbrack 0,T]$, 
\begin{equation*}
\varphi (t) = e^{A_0t}\varphi _{0}+\int_{0}^{t}e^{A_0(t-s)}\mathcal{F}_0(\varphi (s))ds.
\end{equation*}
\end{definition}

Furthermore, by \cite[Proposition 3.4]{Ball04} and the explicit characterization of $D(A_0^*)$, our notion of weak solution is also equivalent to the standard concept of a weak (distributional) solution to Problem (T).

\begin{definition}  \label{d:weak-d}
A function $\varphi = (u,u_{t}):[0,T]\rightarrow \mathcal{H}_0$ is a weak solution of (\ref{abstract-d}) on $[0,T],$ if
\begin{equation*}
\varphi =(u,u_{t})\in C(\left[ 0,T\right] ;\mathcal{H}_0), \quad u_{t}\in L^{2}(\left[ 0,T\right] \times \Gamma ),
\end{equation*}
and, for each $\psi \in H^{1}\left( \Omega \right) ,$ $\left( u_{t},\psi \right) \in C^{1}\left( \left[ 0,T\right] \right) $ with
\begin{equation}  \label{weakf}
\frac{d}{dt} \left\langle u_t(t) ,\psi
\right\rangle + \left\langle \nabla u\left( t\right) ,\nabla \psi \right\rangle + \left\langle u_{t}\left( t\right) ,\psi \right\rangle + \left\langle u_{t}\left( t\right) + u\left( t\right) ,\psi \right\rangle _{L^{2}\left( \Gamma \right) } = -\left\langle f\left( u\left( t\right) \right) ,\psi \right\rangle ,  
\end{equation}
for almost all $t\in \left[ 0,T\right] .$
\end{definition}

Indeed, by \cite[Lemma 3.3]{Ball04} we have that $f:H^{1}\left( \Omega \right) \rightarrow L^{2}\left( \Omega \right) $ is sequentially weakly continuous and continuous, on account of the assumptions (\ref{reg-assf-2})-(\ref{assf-2}). Moreover, by \cite[Proposition 3.4]{Ball04} and the explicit representation of $D(A_0^*)$ in Lemma \ref{adjoint-d}, $\langle \varphi_{t},\theta \rangle \in C^{1}([0,T])$ for all $\theta \in D(A_0^*)$, and (\ref{abs-d-1}) is satisfied. 

Finally, the following notion of strong solution to Problem (T) is as follows.

\begin{definition}  \label{d:strong-d} 
Let $\varphi _{0} = \left( u_{0},u_{1}\right) \in \mathcal{D}_0$, i.e., $(u_{0},u_{1})\in H^{2}(\Omega )\times H^{1}(\Omega )$ such that it satisfies the compatibility condition
\begin{equation*}
\partial _{\mathbf{n}}u_{0} = -u_{1},~\text{on}~\Gamma .
\end{equation*}
A function $\varphi \left( t\right) = \left( u\left( t\right),u_{t}\left( t\right) \right) $ is called a (global) strong
solution if it is a weak solution in the sense of Definition \ref{d:weak-d}, and if it satisfies the following regularity properties:
\begin{equation}
\begin{array}{l}  \label{reg-prop}
\varphi \in L^{\infty }([0,\infty) ;\mathcal{D}_0), \quad \varphi_{t}\in L^{\infty }([0,\infty) ;\mathcal{H}_0), \\ 
~u_{tt}\in L^{\infty }([0,\infty) ;L^{2}(\Omega )), \quad u_{tt}\in L^{2}([0,\infty) ;L^{2}(\Gamma )).
\end{array}
\end{equation}
Therefore, $\varphi(t)=(u(t),u_{t}(t))$ satisfies the equations (\ref{pde}), (\ref{ic}), (\ref{dybc}) almost everywhere, i.e., is a strong solution.
\end{definition}

The following results are due to \cite{Gal&Shomberg15}. 

\begin{theorem}  \label{t:weak-soln} 
Assume (\ref{reg-assf-2}) and (\ref{assf-2}) hold. 
For each $\varphi _{0}=(u_{0},u_{1})\in \mathcal{H}_0$, there exists a unique global weak solution $\varphi = (u,u_{t}) \in C([0,\infty );\mathcal{H}_0)$ to Problem (T). 
In addition, 
\begin{equation}  \label{bndry-a-1}
\partial _{\mathbf{n}} u \in L_{loc}^{2}([0,\infty )\times \Gamma )\quad\text{and}\quad u_{t}\in L_{loc}^{2}([0,\infty )\times \Gamma ).
\end{equation}
For each weak solution, the map 
\begin{equation}  \label{C1-map}
t \mapsto \Vert \varphi (t)\Vert _{\mathcal{H}_0}^{2} + 2\int_{\Omega } F_0(u(t)) dx
\end{equation}
is $C^{1}([0,\infty ))$ and the energy equation
\begin{equation}  \label{energy-1}
\frac{d}{dt} \left\{ \Vert \varphi (t)\Vert _{\mathcal{H}_0}^{2} + 2\int_{\Omega } F(u(t)) dx\right\} = -2\Vert u_{t}(t)\Vert ^{2} - 2\Vert u_{t}(t)\Vert _{L^{2}(\Gamma )}^{2}
\end{equation}
holds (in the sense of distributions) a.e. on $[0,\infty )$. Furthermore, let $\varphi (t)=(u(t),u_{t}(t))$ and $\theta (t)=(v(t),v_{t}(t))$ denote the corresponding weak solution with initial data $\varphi_{0}=(u_{0},u_{1})\in \mathcal{H}_0$ and $\theta_{0}=(v_{0},v_{1})\in \mathcal{H}_0$, respectively, such that $\left\Vert \varphi _{0}\right\Vert _{\mathcal{H}_0}\leq R,$ $\left\Vert \theta _{0}\right\Vert _{\mathcal{H}_0}\leq R.$ Then there exists a constant $\nu_{0}=\nu_{0}(R)>0$, such that, for all $t\geq 0 $,
\begin{align}
& \|\varphi (t)-\theta (t)\|_{\mathcal{H}_0}^{2} +\int_{0}^{t} \left( \|u_{t}(\tau)-v_{t}(\tau)\|^{2} + \|u_{t}(\tau) -v_{t}(\tau)\|_{L^{2}(\Gamma)}^{2} \right) d\tau  \label{cont-dep} \\
& \leq e^{\nu_{0}t}\Vert \varphi _{0} - \theta _{0}\Vert _{\mathcal{H}_0}^{2}.  \notag
\end{align}
Furthermore, when (\ref{reg-assf-3}) also holds, for each $(u_{0},u_{1})\in \mathcal{D}_0$, Problem (T) possesses a unique global strong solution in the sense of Definition \ref{d:strong-d}.
\end{theorem}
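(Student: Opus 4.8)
The plan is to exploit the abstract setting already in place. Since $A_0$ generates a strongly continuous semigroup of contractions on $\mathcal{H}_0$ and $\mathcal{F}_0:\mathcal{H}_0\to\mathcal{H}_0$ is locally Lipschitz, a standard contraction-mapping argument applied to the variation-of-constants formula produces, for each $\varphi_0\in\mathcal{H}_0$, a unique local mild solution on an interval $[0,T_0]$ with $T_0$ depending only on $\|\varphi_0\|_{\mathcal{H}_0}$ (cf. \cite{Pazy83}). To make this global I would pass to the formal energy identity (\ref{Robin-energy-3}) and combine it with the coercivity estimate (\ref{consf-1}): the energy $E(t):=\|\varphi(t)\|_{\mathcal{H}_0}^2+2\int_\Omega F(u(t))\,dx$ is nonincreasing and satisfies $E(t)\ge\mu_0\|\varphi(t)\|_{\mathcal{H}_0}^2-\kappa_f$, whence $\|\varphi(t)\|_{\mathcal{H}_0}$ stays bounded on finite intervals and the solution continues to all of $[0,\infty)$. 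The equivalence of the mild, weak and distributional formulations, together with the membership $\mathcal{F}_0(\varphi(\cdot))\in L^1(0,T;\mathcal{H}_0)$, is precisely the content of the cited results of Ball, which I would invoke directly rather than reprove \cite{Ball04}.

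For the boundary regularity (\ref{bndry-a-1}), the $C^1$-property of the map (\ref{C1-map}), and the energy equation (\ref{energy-1}), I would first argue at the level of strong solutions, where multiplication of (\ref{pde}) by $2u_t$ in $L^2(\Omega)$ is fully justified. Integration by parts turns the term $-\langle\Delta u,2u_t\rangle$ into a boundary contribution which, after inserting the continuity condition (\ref{dybc}), produces exactly the dissipation $-2\|u_t\|_{L^2(\Gamma)}^2$ on the right of (\ref{energy-1}); reading this identity backwards yields $u_t\in L^2_{loc}([0,\infty)\times\Gamma)$ and then $\partial_{\mathbf n}u\in L^2_{loc}([0,\infty)\times\Gamma)$ through (\ref{dybc}). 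The corresponding statements for an arbitrary weak solution then follow by approximating $\varphi_0\in\mathcal{H}_0$ by data in $\mathcal{D}_0$ and passing to the limit, the convergence being controlled by continuous dependence in $\mathcal{H}_0$.

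To prove (\ref{cont-dep}) I would set $\bar\varphi=\varphi-\theta=(\bar u,\bar u_t)$, which solves the difference equation with nonlinear forcing $-(f(u)-f(v))$. Testing this equation against $2\bar u_t$ and integrating in space and time, the interior and boundary damping terms appear with favorable sign and are retained on the left-hand side, while the nonlinear term $-2\langle f(u)-f(v),\bar u_t\rangle$ is estimated by the local Lipschitz bound on $f$---legitimate because both solutions remain in a fixed ball of $\mathcal{H}_0$ by the global bound of the first paragraph---by $C(R)(\|\bar u\|^2+\|\bar u_t\|^2)\le C(R)\|\bar\varphi\|_{\mathcal{H}_0}^2$. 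Gronwall's lemma then yields the exponential factor $e^{\nu_0 t}$ with $\nu_0=\nu_0(R)$.

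The heart of the matter, and the only place assumption (\ref{reg-assf-3}) is genuinely needed, is the construction of global strong solutions with the full regularity (\ref{reg-prop}). For $\varphi_0\in\mathcal{D}_0=D(A_0)$ a local strong solution comes again from semigroup theory once one checks that $\mathcal{F}_0$ is continuously differentiable on $\mathcal{H}_0$, which follows from $f\in C^2(\mathbb{R})$ and the critical growth (\ref{reg-assf-2}) via the Sobolev embedding $H^1(\Omega)\hookrightarrow L^6(\Omega)$. Globalizing this solution requires uniform-in-time higher-order estimates, which I would obtain by the time-differentiation method of \cite{Pata&Zelik06}: differentiate (\ref{pde}) in $t$, set $w=u_t$, and test against $2w_t$. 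The dangerous term $2\langle f'(u)w,w_t\rangle$ is rewritten, after integrating by parts in time, as $\frac{d}{dt}\langle f'(u)w,w\rangle$ minus a lower-order term controlled through (\ref{reg-assf-2}); here the one-sided bound $f'(s)\ge-\vartheta$ from (\ref{reg-assf-3}) bounds the quadratic form $\langle f'(u)w,w\rangle$ from below, so it can be carried along the Gronwall argument. Elliptic regularity for the Neumann-type problem then lifts $u$ to $H^2(\Omega)$, and the boundary energy identity for $w$ supplies $u_{tt}\in L^2([0,\infty)\times\Gamma)$. I expect the principal obstacle to be making these estimates rigorous rather than formal: because no spectral basis is available for $A_0$, the usual Galerkin scheme is unavailable, so the differentiated estimates must be justified by smoothing the data (or via a finite-difference quotient in time) followed by a careful passage to the limit---exactly the technical difficulty flagged in the introduction.
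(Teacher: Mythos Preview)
Your sketch is sound and follows exactly the standard route for this class of problems, but you should be aware that the paper itself does not prove Theorem~\ref{t:weak-soln}: it is stated as a review result and attributed in full to \cite{Gal&Shomberg15} (see the sentence ``The following results are due to \cite{Gal&Shomberg15}'' immediately preceding the theorem). There is therefore no ``paper's own proof'' to compare against beyond that citation.

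That said, your outline is consistent with the strategy the paper describes in its introduction and uses elsewhere: local mild solutions via the contraction semigroup $e^{A_0 t}$ plus the locally Lipschitz $\mathcal{F}_0$; globalization from the energy identity and the lower bound (\ref{consf-1}); Ball's equivalence of mild/weak/distributional solutions; continuous dependence by testing the difference equation with $2\bar u_t$ and Gronwall; and strong solutions via the Pata--Zelik time-differentiation trick, with (\ref{reg-assf-3}) providing the lower bound on $\langle f'(u)w,w\rangle$ needed to close the higher-order estimate. Your remark that the absence of a spectral basis forces an approximation argument (regular data or difference quotients) rather than a Galerkin scheme is exactly the technical point emphasized in the introduction and is the correct caveat. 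In short, what you wrote is essentially what one would find upon consulting \cite{Gal&Shomberg15}; the present paper simply imports the conclusion.
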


In view of Theorem \ref{t:weak-soln}, the following result which allows us to define a dynamical system on $\mathcal{H}_0$ is immediate.

\begin{corollary}  \label{sf-r}
Let the assumptions of Theorem \ref{t:weak-soln} be satisfied. 
Let $\varphi_0=(u_0,u_1)\in\mathcal{H}_0$ and $u$ be the unique global solution of Problem (T). 
The family of maps $S_0=(S_0(t))_{t\geq 0}$ defined by 
\[
S_0(t)\varphi_0(x):=(u(t,x,u_0,u_1),u_t(t,x,u_0,u_1))
\]
is a {\em{semiflow}} generated by Problem (T).
The operators $S_0(t)$ satisfy
\begin{enumerate}
	\item $S_0(t+s)=S_0(t)S_0(s)$ for all $t,s\geq 0$.
	\item $S_0(0)=I_{\mathcal{H}_0}$ (the identity on $\mathcal{H}_0$)
	\item $S_0(t)\varphi_0\rightarrow S_0(t_0)\varphi_0$ for every $\varphi_0\in\mathcal{H}_0$ when $t\rightarrow t_0$.
\end{enumerate}

Additionally, each mapping $S_0(t):\mathcal{H}_0\rightarrow\mathcal{H}_0$ is Lipschitz continuous, uniformly in $t$ on compact intervals; i.e., for all $\varphi_0, \theta_0\in\mathcal{H}_0$, and for each $T\geq 0$, and for all $t\in[0,T]$,
\begin{equation}  \label{S0-Lipschitz-continuous}
\|S_0(t)\varphi_0-S_0(t)\theta_0\|_{\mathcal{H}_0} \leq e^{\nu_0 T}\|\varphi_0-\theta_0\|_{\mathcal{H}_0}.
\end{equation}
\end{corollary}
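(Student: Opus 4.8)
The plan is to derive every assertion directly from Theorem~\ref{t:weak-soln}, which already supplies global existence, uniqueness, the energy identity, and the continuous-dependence estimate (\ref{cont-dep}). The operators $S_0(t)$ are well-defined single-valued maps on all of $\mathcal{H}_0$ precisely because Theorem~\ref{t:weak-soln} guarantees a \emph{unique} global weak solution for each datum $\varphi_0\in\mathcal{H}_0$; I would record this first, so that $S_0(t)\varphi_0$ unambiguously denotes the time-$t$ value of that one solution.

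For property (2), $S_0(0)=I_{\mathcal{H}_0}$ is immediate from the initial condition $\varphi(0)=\varphi_0$. For property (3), continuity of $t\mapsto S_0(t)\varphi_0$ is nothing but the membership $\varphi\in C([0,\infty);\mathcal{H}_0)$ built into the notion of weak solution. The semigroup property (1) is the only algebraic point requiring an argument: fixing $s\geq 0$, I would observe that the two maps $t\mapsto S_0(t)\bigl(S_0(s)\varphi_0\bigr)$ and $t\mapsto S_0(t+s)\varphi_0$ each solve Problem (T) and take the same value $S_0(s)\varphi_0$ at $t=0$, the key being the autonomy (time-translation invariance) of the abstract equation (\ref{abstract-d}). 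Uniqueness in Theorem~\ref{t:weak-soln} then forces the two solutions to coincide for all $t\geq 0$, which is exactly the identity $S_0(t+s)=S_0(t)S_0(s)$.

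Finally, the Lipschitz estimate (\ref{S0-Lipschitz-continuous}) follows by specializing (\ref{cont-dep}). Given $\varphi_0,\theta_0\in\mathcal{H}_0$ and $T\geq 0$, I would set $R:=\max\{\|\varphi_0\|_{\mathcal{H}_0},\|\theta_0\|_{\mathcal{H}_0}\}$ so that the hypotheses of (\ref{cont-dep}) hold with $\nu_0=\nu_0(R)$. Discarding the nonnegative integral term on the left-hand side of (\ref{cont-dep}) leaves $\|S_0(t)\varphi_0-S_0(t)\theta_0\|^2_{\mathcal{H}_0}\leq e^{\nu_0 t}\|\varphi_0-\theta_0\|^2_{\mathcal{H}_0}$, and the monotonicity $e^{\nu_0 t}\leq e^{\nu_0 T}$ for $t\in[0,T]$, combined with taking square roots, yields (\ref{S0-Lipschitz-continuous}).

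There is no serious obstacle here; the corollary is essentially a repackaging of Theorem~\ref{t:weak-soln}. If anything, the one step deserving care is the uniqueness-plus-autonomy argument underlying the semigroup property, together with the observation that the Lipschitz constant is controlled uniformly on $[0,T]$ once the radius $R$ is fixed---a dependence worth flagging explicitly, since tracking whether such constants can later be taken independent of the perturbation parameter $\varepsilon$ is a recurring theme of the paper.
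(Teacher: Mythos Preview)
Your proposal is correct and follows essentially the same route as the paper's own proof: properties (1) and (2) are attributed to the standard theory of abstract Cauchy problems (the paper cites references rather than spelling out the autonomy-plus-uniqueness argument you give), property (3) is read off from $\varphi\in C([0,\infty);\mathcal{H}_0)$, and the Lipschitz bound is extracted directly from (\ref{cont-dep}). Your treatment is slightly more explicit than the paper's, which is appropriate; the only cosmetic point is that taking square roots in (\ref{cont-dep}) actually gives $e^{\nu_0 T/2}$, but since $\nu_0$ is a generic positive constant this is absorbed without issue.
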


\begin{proof}
The semigroup properties (1) and (2) are well-known and apply to a general class of abstract Cauchy problems possessing many applications (see \cite{Babin&Vishik92,Morante79,Goldstein85,Tanabe79}; in particular, a proof of property (1) is given in \cite[\S1.2.4]{Milani&Koksch05}). 
The continuity in $t$ described by property (3) follows from the definition of weak solution (this also establishes strong continuity of the operators when $t_0=0$). 
The continuity property (\ref{S0-Lipschitz-continuous}) follows from (\ref{cont-dep}).
\end{proof}

We will now show that the dynamical system $(S_0(t),\mathcal{H}_0)$ generated by the weak solutions of Problem (T) is dissipative in the sense that $S_0$ admits a closed, positively invariant, bounded absorbing set in $\mathcal{H}_0$. 

\begin{lemma}  \label{t:abs-set-d} 
Assume (\ref{reg-assf-2}) and (\ref{assf-2}) hold. 
For all $\varphi _{0}=(u_{0},u_{1})\in \mathcal{H}_0$, there exist a positive function $Q$ and constants $\omega _{0}>0$, $P_{0}>0$, such that $\varphi (t)$ satisfies, for all $t\geq 0$, 
\begin{equation}  \label{decay-1}
\|\varphi (t)\|_{\mathcal{H}_0}^{2} \leq Q(\Vert \varphi _{0}\Vert _{\mathcal{H}_0}) e^{-\omega _{0}t} + P_{0}.  
\end{equation}
Consequently, the ball $\mathcal{B}_0$ in $\mathcal{H}_0$, 
\begin{equation}  \label{abs-set-d}
\mathcal{B}_0 := \{\varphi \in \mathcal{H}_0:\Vert
\varphi \Vert _{\mathcal{H}_0} \leq P_{0}+1 \}
\end{equation}
is a bounded absorbing set in $\mathcal{H}_0$ for the dynamical system $(S_0(t),\mathcal{H}_0).$
\end{lemma}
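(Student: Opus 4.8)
The plan is to upgrade the partial dissipation furnished by the energy equation (\ref{energy-1})---which controls only $\|u_t\|$ and $\|u_t\|_{L^2(\Gamma)}$, and hence by itself yields merely a non-increasing energy rather than a uniform absorbing set---into genuine decay of the full $\mathcal{H}_0$-norm by means of a perturbed Lyapunov functional. Following the classical multiplier strategy for damped wave equations, I would set
\[
\Psi(t) := \langle u_t(t),u(t)\rangle + \tfrac{1}{2}\|u(t)\|^2 + \tfrac{1}{2}\|u(t)\|_{L^2(\Gamma)}^2,
\]
and consider $\Lambda(t) := \|\varphi(t)\|_{\mathcal{H}_0}^2 + 2\int_\Omega F(u(t))\,dx + \eta\,\Psi(t)$, where $\eta>0$ is a small parameter to be fixed and the first two terms are exactly the quantity differentiated in (\ref{energy-1}).

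The central identity comes from multiplying (\ref{pde}) by $u$ in $L^2(\Omega)$ and integrating by parts, using the boundary condition (\ref{dybc}), $\partial_{\mathbf{n}}u=-u_t$, which converts the boundary integral $-\int_\Gamma \partial_{\mathbf{n}}u\,u$ into $\tfrac{1}{2}\frac{d}{dt}\|u\|_{L^2(\Gamma)}^2$. This produces
\[
\frac{d}{dt}\Psi(t) + \|u(t)\|_1^2 + \langle f(u(t)),u(t)\rangle = \|u_t(t)\|^2 .
\]
This manipulation is formal at the level of weak solutions, and I would make it rigorous by the standard approximation argument: regularize the nonlinearity (and/or the initial data) so that the strong solutions of Theorem \ref{t:weak-soln} are available, for which the computation is licit, derive the estimate with constants independent of the regularization, and pass to the limit with the help of the continuous dependence estimate (\ref{cont-dep}) and the $C^1$-regularity of $t\mapsto\langle u_t,\psi\rangle$ recorded in Definition \ref{d:weak-d}.

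Differentiating $\Lambda$ and inserting (\ref{energy-1}) together with the multiplier identity gives
\[
\frac{d}{dt}\Lambda + (2-\eta)\|u_t\|^2 + 2\|u_t\|_{L^2(\Gamma)}^2 + \eta\|u\|_1^2 + \eta\langle f(u),u\rangle = 0 .
\]
I would then fix $\eta\in(0,2)$ and a second small parameter $\omega>0$ and bound $\frac{d}{dt}\Lambda+\omega\Lambda$ from above. The genuinely nonlinear contributions are handled by trading the potential $\int_\Omega F(u)$ against $\langle f(u),u\rangle$ through the upper bound (\ref{consf-2}) and bounding $\langle f(u),u\rangle$ below via (\ref{consf-3}); the leftover $\|u\|^2$-terms (including the boundary term $\|u\|_{L^2(\Gamma)}^2$, controlled by the trace inequality) are absorbed into $\|u\|_1^2$ once $\omega$ is taken small. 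Using (\ref{consf-1}) for the lower bound and Young's inequality on $\langle u_t,u\rangle$, one verifies the two-sided equivalence
\[
c_1\|\varphi(t)\|_{\mathcal{H}_0}^2 - \kappa_f \le \Lambda(t) \le Q(\|\varphi(t)\|_{\mathcal{H}_0}),
\]
where the upper bound exploits the at most quartic growth of $F$ implied by (\ref{reg-assf-2}) together with the embedding $H^1(\Omega)\hookrightarrow L^4(\Omega)$ in dimension three. With $\eta$ and $\omega$ chosen small enough that every coefficient above carries the correct sign, this yields the differential inequality $\frac{d}{dt}\Lambda+\omega\Lambda\le C$.

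The conclusion then follows from Gronwall's lemma: $\Lambda(t)\le \Lambda(0)e^{-\omega t}+C/\omega$, and feeding this through the equivalence above gives (\ref{decay-1}) with $\omega_0:=\omega$ and $P_0$ an explicit multiple of $C/\omega+\kappa_f$. Since $Q(\|\varphi_0\|_{\mathcal{H}_0})e^{-\omega_0 t}\le 1$ for all $t$ beyond a time depending only on $\|\varphi_0\|_{\mathcal{H}_0}$, the ball $\mathcal{B}_0$ in (\ref{abs-set-d}) absorbs every bounded subset of $\mathcal{H}_0$, which is the asserted absorbing property. I expect the main obstacle to be precisely the constant juggling forced by the critical growth of $f$: because the potential energy $\int_\Omega F(u)$ is not dominated by the dissipation, everything hinges on keeping $\langle f(u),u\rangle$ on the favorable side of the multiplier identity and selecting $\eta,\omega$ so that the coercivity of $\Lambda$ and the negativity of all dissipative coefficients hold simultaneously.
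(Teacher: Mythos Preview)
The paper does not supply its own proof of this lemma; it is quoted from \cite{Gal&Shomberg15}. Your perturbed-Lyapunov argument is the standard route and delivers exactly the exponential decay (\ref{decay-1}) that the statement claims.

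For comparison, the closest proof the paper \emph{does} give is the analogue for Problem (A), Lemma~\ref{t:a-abs-set}. There the same multiplier (written as $w=u_t+\eta u$ rather than adding $\eta\langle u_t,u\rangle$ to the energy, which amounts to the same thing) produces an inequality of the form $\frac{d}{dt}E_\varepsilon + m\|\zeta\|_{\mathcal{H}_\varepsilon}^2 \le M$, and the conclusion is reached via the abstract Proposition~\ref{t:diff-ineq-1} rather than a direct Gronwall step. That route uses only (\ref{consf-3}), hence only (\ref{assf-2}), but yields an absorbing set without the explicit exponential rate. Your Gronwall route is sharper in that it recovers the rate in (\ref{decay-1}).

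One small caveat: to absorb $\omega\int_\Omega F(u)$ when you form $\frac{d}{dt}\Lambda+\omega\Lambda$, you invoke (\ref{consf-2}), which the paper derives from assumption (\ref{reg-assf-3}); but (\ref{reg-assf-3}) is not among the hypotheses listed for this lemma. This is almost certainly an imprecision in the paper (all three assumptions are in force for the remainder of the analysis), but you should flag it.
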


\begin{theorem}  \label{t:exp-attr-d} 
Assume (\ref{reg-assf-2}), (\ref{assf-2}), and (\ref{reg-assf-3}) hold. 
There exists $\omega_1>0$ and a closed and bounded subset $\mathcal{C}_0\subset \mathcal{D}_0$, such that for every nonempty bounded subset $B\subset \mathcal{H}_0$,
\begin{equation}  \label{trans-d}
dist_{\mathcal{H}_0}(S_0(t)B,\mathcal{C}_0) \leq Q(\left\Vert B\right\Vert _{\mathcal{H}_0}) e^{-\omega_1 t}. 
 \end{equation}
\end{theorem}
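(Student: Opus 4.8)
The plan is to prove a \emph{smoothing} (transitivity) property: once a bounded set has been drawn into the $\mathcal{H}_0$-absorbing ball, the semiflow pushes it into a bounded subset of the more regular space $\mathcal{D}_0$, while the residual $\mathcal{H}_0$-distance decays exponentially. First I would reduce to the absorbing set: by Lemma \ref{t:abs-set-d} every bounded $B\subset\mathcal{H}_0$ satisfies $S_0(t)B\subset\mathcal{B}_0$ for $t\geq t_B$, so the interior and boundary dissipation already give a uniform $\mathcal{H}_0$-bound, and in particular uniform control of $\Vert u\Vert_1$ and of the dissipation integral $\int_0^\infty(\Vert u_t\Vert^2+\Vert u_t\Vert^2_{L^2(\Gamma)})\,dt$ via the energy equation (\ref{energy-1}); the waiting time $t_B$ is folded into the prefactor $Q(\Vert B\Vert_{\mathcal{H}_0})$. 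Since weak solutions are not \emph{a priori} differentiable in time, I would establish all higher-order estimates for strong solutions (data in $\mathcal{D}_0$) and transfer them to weak solutions by density together with the continuous-dependence bound (\ref{cont-dep}).

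Next I would decompose the solution as $u=u^0+u^1$, where $u^0$ solves the linear homogeneous problem with the full initial data and boundary condition (\ref{dybc}), and $u^1$ solves the forced problem with right-hand side $-f(u)$ and \emph{zero} initial data. The linear part decays exponentially, $\Vert(u^0,u^0_t)(t)\Vert_{\mathcal{H}_0}\leq Ce^{-\omega_1 t}\Vert(u_0,u_1)\Vert_{\mathcal{H}_0}$, by the standard perturbed-energy argument using both the interior damping $u_t$ and the boundary dissipation from $\partial_{\mathbf{n}}u^0=-u^0_t$; this is exactly what produces the factor $e^{-\omega_1 t}$ in (\ref{trans-d}). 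For the regular part I would differentiate its equation in time and set $v:=u^1_t$, giving $v_{tt}+v_t-\Delta v+v=-\partial_t f(u)$ with $\partial_{\mathbf{n}}v=-v_t$ and bounded initial data, since $v(0)=0$ and $v_t(0)=-f(u_0)\in L^2(\Omega)$ with norm controlled by $\Vert\varphi_0\Vert_{\mathcal{H}_0}$ through the critical growth. Multiplying by $2v_t$, integrating over $[0,t]$, and using $\partial_{\mathbf{n}}v=-v_t$ to generate the boundary dissipation, I would handle the forcing $-\langle f'(u)u_t,v_t\rangle$ by integrating by parts in time, transferring the time derivative off $v_t$ and onto $v\in H^1(\Omega)$ so that the critical pairings become well-defined; the sign condition (\ref{reg-assf-3}) keeps the energy coercive and controls the lower-order contributions of $f'$. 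A Gronwall argument, fed by the finiteness of $\int_0^\infty\Vert u_t\Vert^2\,dt$, then yields a uniform-in-$t$ bound on $(u^1_t,u^1_{tt})$ in $\mathcal{H}_0$.

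With that bound in hand I would invoke elliptic regularity. Reading the elliptic problem $-\Delta u^1+u^1=-u^1_{tt}-u^1_t-f(u)$ with Neumann-type datum $\partial_{\mathbf{n}}u^1=-u^1_t$, the right-hand side is bounded in $L^2(\Omega)$---here the critical growth (\ref{reg-assf-2}) together with $H^1(\Omega)\hookrightarrow L^6(\Omega)$ places $f(u)\in L^2(\Omega)$---while the boundary datum $-u^1_t$ lies in $H^{1/2}(\Gamma)$ because $u^1_t\in H^1(\Omega)$. The Neumann elliptic estimate then gives a uniform $H^2(\Omega)$ bound on $u^1$, so $(u^1,u^1_t)$ ranges over a bounded subset of $\mathcal{D}_0$. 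Taking $\mathcal{C}_0$ to be its closure in $\mathcal{D}_0$ and choosing the comparison point $c(t)=(u^1,u^1_t)(t)\in\mathcal{C}_0$ converts the exponential decay of $(u^0,u^0_t)$ into the distance estimate (\ref{trans-d}).

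The main obstacle is closing the second-order energy estimate \emph{uniformly} against the critical nonlinear terms generated by the time differentiation. The forcing $\partial_t f(u)=f'(u)u_t$ is exactly at the borderline of $H^1\hookrightarrow L^6$ in $\Omega\subset\mathbb{R}^3$: the naive pairing $\langle f'(u)u_t,v_t\rangle$ is not even well-defined, since $f'(u)\in L^3(\Omega)$ and $u_t\in L^2(\Omega)$ force the product only into $L^{6/5}(\Omega)$, whereas $v_t=u^1_{tt}\in L^2(\Omega)$. After integrating by parts in time one instead confronts terms of the type $\langle f'(u)u_t,v\rangle$ and $\int_0^t\langle f''(u)u_t^2,v\rangle\,ds$, which must be split by H\"older's inequality and interpolated between $L^2$ and $L^6$, then absorbed into $\Vert\nabla v\Vert^2$ and the dissipative terms $\Vert v_t\Vert^2$, $\Vert v_t\Vert^2_{L^2(\Gamma)}$ by Young's inequality, with the non-absorbable remainder controlled through the dissipation integral. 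Keeping every constant independent of the data beyond the explicit prefactor $Q(\Vert B\Vert_{\mathcal{H}_0})$ is the delicate point, and it is precisely here that assumptions (\ref{reg-assf-2}) and (\ref{reg-assf-3})---in the spirit of the $H^2$-elliptic regularity method of \cite{Pata&Zelik06} and \cite{Frigeri10}---are indispensable.
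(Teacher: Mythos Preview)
Your decomposition differs from the one the paper (via \cite{Gal&Shomberg15} and \cite{Frigeri10}) actually uses, and at the critical exponent the difference is not cosmetic: your linear/forcing split $u=u^0+u^1$ does not close. After differentiating the $u^1$-equation and testing with $v_t=u^1_{tt}$, the forcing is $-f'(u)u_t$ with $u_t=u^0_t+v$. The piece $\langle f'(u)v,v_t\rangle$ can indeed be rewritten as $\tfrac12\frac{d}{dt}\langle f'(u)v,v\rangle-\tfrac12\langle f''(u)u_t,v^2\rangle$ and handled via (\ref{reg-assf-3}). The obstruction is the cross term $\langle f'(u)u^0_t,v_t\rangle$: here $f'(u)\in L^3$, $u^0_t\in L^2$ only, and $v_t\in L^2$, so the pairing is not defined. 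Your proposed integration by parts in time replaces it with $\langle \partial_t(f'(u)u_t),v\rangle$, but $\partial_t(f'(u)u_t)=f''(u)u_t^2+f'(u)u_{tt}$; you list only the first contribution. The term $\langle f'(u)u_{tt},v\rangle$ forces you to substitute $u_{tt}=-u_t+\Delta u-u-f(u)$, and $\langle f'(u)\Delta u,v\rangle$ then requires $f'(u)v\in H^1(\Omega)$, which fails since $f'(u)\nabla v\in L^{6/5}$ only. Even the term you do list, $\langle f''(u)u_t^2,v\rangle$, needs $u_t\in L^3$, which you do not have for the full solution (only $u_t\in L^2$). So the estimate does not close at the critical growth (\ref{reg-assf-2}).

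The paper's route (see the decomposition (\ref{pde-v})--(\ref{pde-w}) written out for Problem (A), which mirrors the argument of \cite{Gal&Shomberg15} for Problem (T)) is designed precisely to avoid this cross term. One first shifts the nonlinearity, $\psi(s)=f(s)+\beta s$ with $\beta\ge\vartheta$, so that $\psi'\ge0$. The compact part $w$ then solves its \emph{own} semilinear equation $w_{tt}+w_t-\Delta w+w+\psi(w)=\beta u$ with zero data, while the decaying part $v=u-w$ carries $\psi(u)-\psi(w)$ and the full initial data. Differentiating the $w$-equation gives $h_{tt}+h_t-\Delta h+h+\psi'(w)h=\beta u_t$ with $h=w_t$: the nonlinear term is $\psi'(w)h$, involving $h$ itself, so $\langle\psi'(w)h,h_t\rangle=\tfrac12\frac{d}{dt}\langle\psi'(w)h,h\rangle-\tfrac12\langle\psi''(w)w_t,h^2\rangle$. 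The first piece is nonnegative and enters the energy functional; the remainder obeys $|\langle\psi''(w)w_t,h^2\rangle|\le C\|w_t\|\,\|h\|_1^2$ by (\ref{reg-assf-2}) and $H^1\hookrightarrow L^6$, which is a clean Gr\"onwall coefficient controlled through Lemma \ref{t:Gronwall-bound} (the analogue of your dissipation integral). No term of the type $\langle f'(u)u^0_t,v_t\rangle$ ever appears. The exponential decay of $v$ is obtained separately, using the monotonicity $\langle\psi(u)-\psi(w),v\rangle\ge0$. The elliptic-regularity step you outline at the end is then exactly what is used to upgrade the $H^1\times L^2$ bound on $(w_t,w_{tt})$ to an $H^2$ bound on $w$.
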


By standard arguments of the theory of attractors (see, e.g., \cite{Hale88,Temam88}), the existence of a compact global attractor $\mathcal{A}_0 \subset \mathcal{C}_0$ for the semigroup $S_0(t)$ follows.

\begin{theorem}  \label{t:gattr-d} 
Let the assumptions of Theorem \ref{t:exp-attr-d} hold. 
The semiflow $S_0$ generated by the solutions of Problem (T) admits a unique global attractor
\begin{equation}  \label{gattr-d-2}
\mathcal{A}_0 = \omega (\mathcal{B}_0) := \bigcap_{s\geq t}{\overline{\bigcup_{t\geq 0}S_0(t)\mathcal{B}_0}}^{\mathcal{H}_0}
\end{equation}
in $\mathcal{H}_0$. Moreover, the following hold:

(i) For each $t\geq 0$, $S_0(t)\mathcal{A}_0 = \mathcal{A}_0$.

(ii) For every nonempty bounded subset $B$ of $\mathcal{H}_0$,
\begin{equation}  \label{gattraction}
\lim_{t\rightarrow \infty }dist_{\mathcal{H}_0}(S_0(t)B,\mathcal{A}_0) = 0.
\end{equation}

(iii) The global attractor $\mathcal{A}_0$ is bounded in $\mathcal{D}_0$ and trajectories on $\mathcal{A}_0$ are strong solutions.
\end{theorem}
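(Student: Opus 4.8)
The plan is to obtain $\mathcal{A}_0$ from the classical existence theorem for global attractors (see, e.g., \cite{Hale88,Temam88,Babin&Vishik92}), whose hypotheses are: a strongly continuous semigroup, dissipativity, and asymptotic compactness. Two of these are already in hand. Corollary \ref{sf-r} furnishes the semigroup $S_0=(S_0(t))_{t\geq 0}$ with the required continuity properties, and Lemma \ref{t:abs-set-d} provides the bounded absorbing set $\mathcal{B}_0\subset\mathcal{H}_0$. What remains is to extract the asymptotic compactness from Theorem \ref{t:exp-attr-d} and then assemble the pieces.

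First I would manufacture a compact attracting set in $\mathcal{H}_0$. The set $\mathcal{C}_0$ supplied by Theorem \ref{t:exp-attr-d} is bounded in $\mathcal{D}_0$, a closed subspace of $H^2(\Omega)\times H^1(\Omega)$. By the Rellich--Kondrachov theorem the embedding $H^2(\Omega)\times H^1(\Omega)\hookrightarrow H^1(\Omega)\times L^2(\Omega)=\mathcal{H}_0$ is compact, so $\mathcal{C}_0$ is precompact in $\mathcal{H}_0$ and $K:=\overline{\mathcal{C}_0}^{\mathcal{H}_0}$ is a compact subset of $\mathcal{H}_0$. The transitivity estimate (\ref{trans-d}) then gives, for every bounded $B\subset\mathcal{H}_0$, the bound $dist_{\mathcal{H}_0}(S_0(t)B,K)\leq dist_{\mathcal{H}_0}(S_0(t)B,\mathcal{C}_0)\leq Q(\|B\|_{\mathcal{H}_0})e^{-\omega_1 t}\to 0$, so $K$ attracts all bounded subsets of $\mathcal{H}_0$ and $S_0$ is asymptotically compact.

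With the three hypotheses verified, the abstract theorem yields a unique global attractor $\mathcal{A}_0$, characterized as the $\omega$-limit set $\omega(\mathcal{B}_0)$ of the absorbing set as in (\ref{gattr-d-2}), and delivers the invariance (i) together with the attraction property (ii). Uniqueness is automatic, since a global attractor, when it exists, is simultaneously the minimal closed attracting set and the maximal bounded invariant set. Moreover, because $K$ is a compact attracting set, minimality forces $\mathcal{A}_0\subseteq K$.

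For the regularity claim (iii) I would show that $K$ is itself bounded in $\mathcal{D}_0$, whence the same follows for $\mathcal{A}_0$. Given $x\in K$, write $x=\lim_n x_n$ in $\mathcal{H}_0$ with $x_n\in\mathcal{C}_0$ and $\|x_n\|_{\mathcal{D}_0}\leq M$; by reflexivity of $\mathcal{D}_0$ a subsequence converges weakly in $\mathcal{D}_0$, and the compact embedding identifies the weak limit with $x$, so $x\in\mathcal{D}_0$ and weak lower semicontinuity of $\|\cdot\|_{\mathcal{D}_0}$ yields $\|x\|_{\mathcal{D}_0}\leq M$. Thus $\mathcal{A}_0\subseteq K$ is bounded in $\mathcal{D}_0\subset H^2(\Omega)\times H^1(\Omega)$. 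Finally, every point of $\mathcal{A}_0$ lies in $\mathcal{D}_0$, so by the invariance of $\mathcal{A}_0$ and the strong-solution assertion in the last sentence of Theorem \ref{t:weak-soln}, each complete trajectory lying on $\mathcal{A}_0$ is a strong solution. The bulk of the work has already been discharged in Theorem \ref{t:exp-attr-d}, so existence, invariance, and attraction are essentially immediate; the step demanding the most care is the regularity claim, precisely because one must check that passing to the $\mathcal{H}_0$-closure of $\mathcal{C}_0$ preserves the $\mathcal{D}_0$-bound, which is where reflexivity and weak lower semicontinuity are used.
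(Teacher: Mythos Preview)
Your proposal is correct and matches the paper's approach: the paper does not give a detailed proof of this theorem but simply remarks that ``by standard arguments of the theory of attractors (see, e.g., \cite{Hale88,Temam88}), the existence of a compact global attractor $\mathcal{A}_0 \subset \mathcal{C}_0$ for the semigroup $S_0(t)$ follows,'' and you have spelled out precisely those standard arguments. The only cosmetic difference is that the paper asserts $\mathcal{A}_0\subset\mathcal{C}_0$ directly (treating $\mathcal{C}_0$, being closed and bounded in the Hilbert space $\mathcal{D}_0$, as already compact in $\mathcal{H}_0$ via the compact embedding), whereas you pass through the $\mathcal{H}_0$-closure $K$ and then recover the $\mathcal{D}_0$-bound by a reflexivity/weak-lower-semicontinuity argument; both routes are valid and yield the same conclusion.
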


The existence of an exponential attractor follows from the application of the abstract result (see, e.g., \cite[Proposition 1]{EMZ00}, \cite{FGMZ04}, \cite{GGMP05}).

\begin{theorem}  \label{t:expattr-d} 
Assume (\ref{reg-assf-2}), (\ref{assf-2}), and (\ref{reg-assf-3}) hold. 
The dynamical system $\left( S_0,\mathcal{H}_0\right)$
associated with Problem (T) admits an exponential attractor $\mathcal{M}_0$ compact in $\mathcal{H}_0$, and bounded in $\mathcal{C}_0$. Moreover, there hold:

(i) For each $t\geq 0$, $S_0(t)\mathcal{M}_0 \subseteq \mathcal{M}_0$.

(ii) The fractal dimension of $\mathcal{M}_0$ with respect to
the metric $\mathcal{H}_0$ is finite, namely,
\begin{equation*}
\dim _{\rm{F}}\left( \mathcal{M}_0,\mathcal{H}_0 \right) \leq C <\infty,
\end{equation*}
for some positive constant $C$.

(iii) There exist $\varrho_0>0$ and a positive nondecreasing function $Q$ such that, for all $t\geq 0$, 
\begin{equation*}
dist_{\mathcal{H}_0}(S_0(t)B,\mathcal{M}_0) \leq Q(\|B\|_{\mathcal{H}_0}) e^{-\varrho_0 t},
\end{equation*}
for every nonempty bounded subset $B$ of $\mathcal{H}_0.$
\end{theorem}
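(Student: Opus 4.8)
The plan is to invoke the abstract theory of exponential attractors for semiflows (the versions in \cite{EMZ00,FGMZ04,GGMP05}), whose hypotheses reduce to three ingredients: a compact, positively invariant, exponentially absorbing set; a \emph{smoothing} (or transitivity-of-compactness) estimate on the difference of two trajectories evaluated at a fixed time; and H\"older continuity of the map $(t,\varphi)\mapsto S_0(t)\varphi$. First I would fix the compact attracting set. By Theorem \ref{t:exp-attr-d} the bounded set $\mathcal{C}_0\subset\mathcal{D}_0$ attracts every bounded $B\subset\mathcal{H}_0$ at an exponential rate, and the Rellich--Kondrachov embedding $\mathcal{D}_0\hookrightarrow\hookrightarrow\mathcal{H}_0$ (i.e.\ $H^2(\Omega)\times H^1(\Omega)\hookrightarrow\hookrightarrow H^1(\Omega)\times L^2(\Omega)$) shows that $\mathcal{C}_0$ is compact in $\mathcal{H}_0$. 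Absorbing $\mathcal{C}_0$ into itself and passing to the closure of $\bigcup_{t\geq t_0}S_0(t)\mathcal{C}_0$ in $\mathcal{H}_0$ for a suitable entering time $t_0$ produces a compact, positively invariant, exponentially absorbing set $\mathcal{K}_0\subset\mathcal{C}_0$, to which the semiflow is restricted; by Theorem \ref{t:gattr-d}(iii) all trajectories in $\mathcal{K}_0$ are strong solutions and hence uniformly bounded in $\mathcal{D}_0$.

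Next I would establish the smoothing estimate by a decay-plus-regularity decomposition of the difference of trajectories. For $\varphi_0,\theta_0\in\mathcal{K}_0$ write $w:=u-v$, where $(u,u_t)=S_0(t)\varphi_0$ and $(v,v_t)=S_0(t)\theta_0$, so that $w$ solves
\begin{equation*}
w_{tt}+w_t-\Delta w+w+\bigl(f(u)-f(v)\bigr)=0,\qquad \partial_{\mathbf{n}}w=-w_t\ \text{on}\ \Gamma,
\end{equation*}
with initial datum $\bar\varphi_0=\varphi_0-\theta_0$. I split $w=w^d+w^c$, where $w^d$ solves the linear homogeneous problem with datum $\bar\varphi_0$ and $w^c$ carries the nonlinear forcing $-(f(u)-f(v))$ with zero datum. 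An energy estimate of the type (\ref{Robin-energy-3}) for the linear part yields $\|(w^d,w^d_t)(t)\|_{\mathcal{H}_0}\leq Ce^{-\omega t}\|\bar\varphi_0\|_{\mathcal{H}_0}$, which is $\leq\tfrac12\|\bar\varphi_0\|_{\mathcal{H}_0}$ once a fixed time $t^\ast$ is chosen large enough. For the compact part I would show $\|(w^c,w^c_t)(t^\ast)\|_{\mathcal{D}_0}\leq Q(\|\mathcal{K}_0\|_{\mathcal{H}_0})\,\|\bar\varphi_0\|_{\mathcal{H}_0}$, using $f(u)-f(v)=\bigl(\int_0^1 f'(v+\tau w)\,d\tau\bigr)w$ together with the critical growth bound (\ref{reg-assf-2}), the one-sided bound (\ref{reg-assf-3}), the continuous-dependence estimate (\ref{cont-dep}), and the uniform $\mathcal{D}_0$-bound on $\mathcal{K}_0$. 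This gives the decomposition form of the smoothing property with $\mathcal{D}_0\hookrightarrow\hookrightarrow\mathcal{H}_0$ playing the role of the compactly embedded space.

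With these in hand, the H\"older continuity of $(t,\varphi)\mapsto S_0(t)\varphi$ on $[t^\ast,2t^\ast]\times\mathcal{K}_0$ follows from the Lipschitz-in-space estimate (\ref{cont-dep}) together with the Lipschitz-in-time control afforded by the strong-solution regularity (\ref{reg-prop}). The abstract construction then furnishes an exponential attractor $\mathcal{M}^\ast$ for the discrete map $S_0(t^\ast)$ on $\mathcal{K}_0$; setting $\mathcal{M}_0:=\bigcup_{t\in[0,t^\ast]}S_0(t)\mathcal{M}^\ast$ yields a compact, positively invariant set of finite fractal dimension that attracts $\mathcal{K}_0$ exponentially, establishing (i) and (ii). Property (iii), exponential attraction of arbitrary bounded $B\subset\mathcal{H}_0$, then follows by composing this with the exponential absorption of $B$ into $\mathcal{K}_0$ coming from Theorem \ref{t:exp-attr-d}, via the standard transitivity-of-exponential-attraction lemma.

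The main obstacle I expect is the regularity gain for the compact part $w^c$ without recourse to fractional powers of the Laplacian, which are unavailable under the acoustic/transport boundary condition. I would circumvent this by the $H^2$-elliptic regularity scheme of \cite{Pata&Zelik06}: differentiate the equation for $w^c$ in time, derive a uniform $\mathcal{H}_0$-bound on $(w^c_t,w^c_{tt})$ by an energy argument exploiting (\ref{reg-assf-3}), and then read the PDE at fixed $t$ as an elliptic problem $-\Delta w^c=(\text{$L^2$ data})$ subject to $\partial_{\mathbf{n}}w^c=-w^c_t$, concluding $w^c\in H^2(\Omega)$ and hence $(w^c,w^c_t)\in\mathcal{D}_0$. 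Controlling the critically-growing nonlinear difference $f(u)-f(v)$ in $L^2(\Omega)$ uniformly over $\mathcal{K}_0$ is the delicate point where the growth assumption (\ref{reg-assf-2}) and the strong-solution bounds are essential.
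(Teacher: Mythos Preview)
Your proposal is correct and follows essentially the same approach as the paper: the paper does not give a detailed proof here but simply invokes the abstract exponential-attractor machinery of \cite{EMZ00,FGMZ04,GGMP05} (spelled out for Problem (A) as hypotheses (H1)--(H3) in Proposition~\ref{abstract-a}) together with the transitivity-of-exponential-attraction lemma (Proposition~\ref{t:exp-attr}), deferring the verification of the hypotheses to \cite{Gal&Shomberg15}. Your outline---a positively invariant compact set built from $\mathcal{C}_0$, the contraction/compact splitting of the difference of two trajectories with the $H^2$-elliptic regularity argument of \cite{Pata&Zelik06} supplying the $\mathcal{D}_0$-bound on the compact piece, and Lipschitz continuity in $(t,\varphi)$---is precisely the content of those hypotheses, so there is no substantive divergence.
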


\begin{remark}
Above,
\begin{equation*}
\dim _{F}(\mathcal{M}_0,\mathcal{H}_0) := \limsup_{r\rightarrow 0} \frac{\ln \mu _{\mathcal{H}_0}(\mathcal{M}_0,r)}{-\ln r} < \infty ,
\end{equation*}
where, $\mu _{\mathcal{H}_0}(\mathcal{X},r)$ denotes the
minimum number of $r$-balls from $\mathcal{H}_0$ required to
cover $\mathcal{X}$.
\end{remark}

\begin{corollary}
Under the assumptions of Theorem \ref{t:expattr-d}, there holds
\begin{equation*}
\dim _{F}(\mathcal{A}_0,\mathcal{H}_0) \leq \dim _{F}(\mathcal{M}_0, \mathcal{H}_0).
\end{equation*}
As a consequence, $\mathcal{A}_0$ has finite fractal dimension.
\end{corollary}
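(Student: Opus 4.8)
The plan is to exploit the set inclusion $\mathcal{A}_0\subseteq\mathcal{M}_0$ together with the monotonicity of the fractal dimension under inclusion, and then read off finiteness from the bound already established in Theorem \ref{t:expattr-d}(ii). These are the only two ingredients, so the proof is short; the one point that deserves care is justifying the inclusion from the abstract properties of the two attracting sets.

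First I would establish $\mathcal{A}_0\subseteq\mathcal{M}_0$. Recall from Theorem \ref{t:gattr-d} that $\mathcal{A}_0=\omega(\mathcal{B}_0)$ is the $\omega$-limit set of the bounded absorbing set $\mathcal{B}_0$, while Theorem \ref{t:expattr-d} provides an exponential attractor $\mathcal{M}_0$ that is compact (hence closed) in $\mathcal{H}_0$, positively invariant under $S_0(t)$, and attracts every bounded subset of $\mathcal{H}_0$ at an exponential rate, in particular $\mathcal{B}_0$. Fix any $\zeta\in\mathcal{A}_0$. By the standard characterization of the $\omega$-limit set there are sequences $t_n\to\infty$ and $\varphi_n\in\mathcal{B}_0$ with $S_0(t_n)\varphi_n\to\zeta$ in $\mathcal{H}_0$. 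Since $\mathcal{M}_0$ attracts $\mathcal{B}_0$, we have $dist_{\mathcal{H}_0}(S_0(t_n)\mathcal{B}_0,\mathcal{M}_0)\to 0$, whence the distance of $S_0(t_n)\varphi_n$ to the closed set $\mathcal{M}_0$ tends to $0$; passing to the limit gives $\zeta\in\mathcal{M}_0$. Therefore $\mathcal{A}_0\subseteq\mathcal{M}_0$.

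Next I would invoke the monotonicity of fractal dimension. Using the characterization recorded in the preceding Remark, $\dim_F(\mathcal{X},\mathcal{H}_0)=\limsup_{r\to 0}\frac{\ln\mu_{\mathcal{H}_0}(\mathcal{X},r)}{-\ln r}$ with $\mu_{\mathcal{H}_0}(\mathcal{X},r)$ the minimal number of $r$-balls of $\mathcal{H}_0$ needed to cover $\mathcal{X}$. Because $\mathcal{A}_0\subseteq\mathcal{M}_0$, any cover of $\mathcal{M}_0$ by $r$-balls is a fortiori a cover of $\mathcal{A}_0$, so $\mu_{\mathcal{H}_0}(\mathcal{A}_0,r)\le\mu_{\mathcal{H}_0}(\mathcal{M}_0,r)$ for every $r>0$. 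Taking logarithms, dividing by $-\ln r>0$ for small $r$, and passing to the $\limsup$ yields
\[
\dim_F(\mathcal{A}_0,\mathcal{H}_0)\le\dim_F(\mathcal{M}_0,\mathcal{H}_0).
\]
Finally, combining this with the estimate $\dim_F(\mathcal{M}_0,\mathcal{H}_0)\le C<\infty$ from Theorem \ref{t:expattr-d}(ii) shows that $\mathcal{A}_0$ has finite fractal dimension.

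I do not anticipate a genuine obstacle here; the only step requiring attention is the inclusion $\mathcal{A}_0\subseteq\mathcal{M}_0$, which rests on the closedness of $\mathcal{M}_0$ and its exponential attraction of the absorbing set $\mathcal{B}_0$ (note that positive invariance of $\mathcal{M}_0$ is not even needed for the inclusion argument as phrased). Everything else is the elementary monotonicity of the covering count under set inclusion.
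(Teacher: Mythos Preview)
Your proposal is correct and is precisely the standard argument the paper leaves implicit (the corollary is stated without proof): the inclusion $\mathcal{A}_0\subseteq\mathcal{M}_0$ follows from closedness and attraction of $\mathcal{M}_0$, and the dimension inequality is just monotonicity of the covering numbers under inclusion. There is nothing to add.
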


\section{Attractors for Problem (A), the $\ep>0$ case}  \label{s:acoustic}

In this section Problem (A) is discussed. 
Weak solutions, dissipativity (i.e., the existence of an absorbing set), as well as the existence of a global attractor in this case was established under the assumptions (\ref{reg-assf-2})-(\ref{assf-2}) in \cite{Frigeri10} for the case when $\ep=1$.
Strong solutions are shown to exist under the assumptions (\ref{reg-assf-2}), (\ref{assf-2}), and (\ref{reg-assf-3}); further, the optimal regularity of the global attractor and the existence of an exponential attractor we also shown in \cite{Frigeri10} when $\ep=1$.
The well-posedness and dissipativity results stated in this section follow directly from \cite{Frigeri10} after modifications to incorporate the perturbation $0<\varepsilon\leq 1$. 
Indeed, the main results presented here follow directly from \cite{Frigeri10} with suitable modifications to account for the perturbation parameter $\ep$ occurring in the equation governing the acoustic boundary condition. 
We do not present all the proofs for the case $\ep\in(0,1)$ since the modified proofs follow directly from Frigeri's work \cite{Frigeri10} with only minor modifications, but in some instances $\ep$ may appear in a crucial way in some parameters.
In particular, we do present the proof for the existence of an absorbing set to demonstrate the independence of various parameters (such as the time of entry into the absorbing set and the radius of the absorbing set) of the perturbation parameter $\ep$.
Other observations will be explained, where needed, by a remark following the statement of the claim.

By using the same arguments in \cite{Frigeri10}, it can easily be shown that, for each $\ep\in(0,1]$, Problem (A) possesses unique global weak solutions in a suitable phase space, and the solutions depend continuously on the initial data. 
For the reader's convenience, we sketch the main arguments involved in the proofs. 
As with Problem (T), the solutions generate a family of Lipschitz continuous semiflows, now depending on $\ep$, each of which admits a bounded, absorbing, positively invariant set. 
As mentioned, we will also establish the existence of a family of exponential attractors.
Furthermore, under assumptions (\ref{reg-assf-2}), (\ref{assf-2}), (\ref{reg-assf-3}), we also show the existence of a family of exponential attractors, however, any robustness/H\"older continuity result for the family of exponential attractors is still out of reach. 
The upper-semicontinuity of the family of global will be shown in Section \ref{s:cont}.

The phase space and abstract formulation for the perturbation problem is now discussed. 
In contrast to the previous section, the underlying spaces, maps and operators now depend on the perturbation parameter $\ep$.
Let 
\[
\mathcal{H}:= H^1(\Omega) \times L^2(\Omega) \times L^2(\Gamma) \times L^2(\Gamma).
\]
The space $\mathcal{H}$ is Hilbert with the norm whose square is given by, for $\zeta=(u,v,\delta,\gamma)\in\mathcal{H}$,
\[
\|\zeta\|^2_{\mathcal{H}} := \|u\|^2_1 + \|v\|^2 + \|\delta\|^2_{L^2(\Gamma)} + \|\gamma\|^2_{L^2(\Gamma)}.
\]
Let $\varepsilon>0$ and denote by $\mathcal{H}_\varepsilon$ the space $\mathcal{H}$ when endowed with the $\varepsilon$-weighted norm whose square is given by
\begin{align}
\|\zeta\|^2_{\mathcal{H}_\varepsilon} & := \|u\|^2_1 + \|v\|^2 + \|\delta\|^2_{L^2(\Gamma)} + \varepsilon^2 \|\gamma\|^2_{L^2(\Gamma)}   \notag \\ 
& = \left( \|\nabla u\|^2 + \|u\|^2 \right) + \|v\|^2 + \|\delta\|^2_{L^2(\Gamma)} + \varepsilon^2 \|\gamma\|^2_{L^2(\Gamma)}.  \notag
\end{align}
Let
\[
D(\Delta):= \{ u\in L^2(\Omega) : \Delta u\in L^2(\Omega) \},
\]
and define the set
\[
D(A_\ep) := \left\{ (u,v,\delta,\gamma)\in D(\Delta) \times H^1(\Omega) \times L^2(\Gamma) \times L^2(\Gamma) : \partial_{\bf{n}}u = \gamma \ \text{on} \ \Gamma \right\}.
\]
Define the linear unbounded operator $A_\ep:D(A_\ep)\subset\mathcal{H}_\ep\rightarrow\mathcal{H}_\ep$ by
\[
A_\ep:=\begin{pmatrix} 0 & 1 & 0 & 0 \\ \Delta-1 & -1 & 0 & 0 \\ 0 & 0 & 0 & \frac{1}{\ep} \\ 0 & -1 & -\frac{1}{\ep} & -\frac{1}{\ep} \end{pmatrix}.
\]
For each $\ep\in(0,1]$, the operator $A_\ep$ with domain $D(A_\ep)$ is an infinitesimal generator of a strongly continuous semigroup of contractions on $\mathcal{H}_\ep$, denoted $e^{A_\ep t}$. According to \cite{Frigeri10}, the $\ep=1$ case follows from \cite[Theorem 2.1]{Beale76}. For each $\ep\in(0,1]$, $A_\ep$ is dissipative because, for all $\zeta=(u,v,\delta,\gamma)\in D(A_\ep)$,
\[
\langle A_\ep\zeta,\zeta \rangle_{\mathcal{H}_\ep} = -\|v\|^2  - \frac{1}{\ep}\|\gamma\|^2_{L^2(\Gamma)} \leq 0.
\]
Also, the Lax--Milgram theorem can be applied to show that the elliptic system, $(I+A_\ep)\zeta=\xi$, admits a unique weak solution $\zeta\in D(A_\ep)$ for any $\xi\in\mathcal{H}_\ep$. Thus, $R(I+A_\ep)=\mathcal{H}_\ep$. 

For each $\ep\in(0,1]$, the map $\mathcal{G}_\ep:\mathcal{H}_\ep\rightarrow\mathcal{H}_\ep$ given by 
\[
\mathcal{G}_\ep(\zeta):=\begin{pmatrix} 0 \\ -f(u) \\ 0 \\ -\left( 1- \frac{1}{\ep} \right) \delta \end{pmatrix}
\]
for all $\zeta=(u,v,\delta,\gamma)\in\mathcal{H}_\ep$ is locally Lipschitz continuous because the map $f:H^1(\Omega)\rightarrow L^2(\Omega)$ is locally Lipschitz continuous. 
Then Problem (A) may be put into the abstract form in $\mathcal{H}_\ep$
\begin{equation}\label{abstract-acoustic-problem}
\left\{
\begin{array}{l} \displaystyle\frac{d\zeta}{dt} =A_\ep\zeta + \mathcal{G}_\ep(\zeta) \\ 
\zeta(0)=\zeta_0 \end{array}
\right.
\end{equation}
where $\zeta=\zeta(t)=(u(t),u_t(t),\delta(t),\delta_t(t))$ and $\zeta_0=(u_0,u_1,\delta_0,\delta_1)\in\mathcal{H}_\ep$, now where $v=u_t$ and $\gamma=\delta_t$ in the sense of distributions.

To obtain the {\em{energy equation}} for Problem (A), multiply (\ref{pde}) by $2u_t$ in $L^2(\Omega)$ and multiply (\ref{abc}) by $2\delta_t$ in $L^2(\Gamma)$, then sum the resulting identities to obtain
\begin{equation}\label{acoustic-energy-3}
\frac{d}{dt} \left\{ \|\zeta\|^2_{\mathcal{H}_\ep} + 2\int_\Omega F(u) dx d\sigma \right\} + 2\|u_t\|^2 + 2\|\delta_t\|^2_{L^2(\Gamma)} = 0,
\end{equation}
where $F(s)=\int_0^s f(\xi) d\xi$. 

\begin{lemma}  \label{adjoint-a}
For each $\varepsilon \in (0,1]$, the adjoint of $A_\ep$, denoted 
$A_\ep^*$, is given by 
\begin{equation*}
A_\ep^*:= -\begin{pmatrix} 0 & 1 & 0 & 0 \\ \Delta-1 & 1 & 0 & 0 \\ 0 & 0 & 0 & \frac{1}{\ep} \\ 0 & -1 & -\frac{1}{\ep} & \frac{1}{\ep} \end{pmatrix},
\end{equation*}
with domain 
\begin{equation*}
D(A_\ep^*):=\{(\chi,\psi,\phi,\xi)\in D(\Delta) \times H^1(\Omega) \times L^2(\Gamma) \times L^2(\Gamma) : \partial _{\mathbf{n}}\chi = - \xi \ \text{on} \ \Gamma \}.
\end{equation*}
\end{lemma}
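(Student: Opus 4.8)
The plan is to verify the adjoint identity $\langle A_\ep\zeta,\theta\rangle_{\mathcal{H}_\ep}=\langle\zeta,A_\ep^*\theta\rangle_{\mathcal{H}_\ep}$ by direct computation, in the same spirit as Lemma \ref{adjoint-d} and \cite[Lemma 3.1]{Ball04}, and then to pin the adjoint \emph{domain} down by a maximality argument rather than by delicate trace bookkeeping. Since $A_\ep$ generates a strongly continuous contraction semigroup on the Hilbert space $\mathcal{H}_\ep$, it is densely defined and closed, so its Hilbert-space adjoint $A_\ep^*$ exists; moreover, on a Hilbert space the adjoint semigroup is again a $C_0$ contraction semigroup, so $A_\ep^*$ is itself m-dissipative. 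Writing $B$ for the operator with the domain displayed in the statement, I would first show $B\subseteq A_\ep^*$, and then upgrade this to equality.

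For the inclusion $B\subseteq A_\ep^*$, I fix $\zeta=(u,v,\delta,\gamma)\in D(A_\ep)$ and $\theta=(\chi,\psi,\phi,\xi)$ in the claimed domain, and expand $\langle A_\ep\zeta,\theta\rangle_{\mathcal{H}_\ep}$ against the $\ep$-weighted inner product, componentwise. The only interior integration by parts sits in the second slot, where Green's first identity gives $\langle\Delta u,\psi\rangle=-\langle\nabla u,\nabla\psi\rangle+\langle\partial_{\mathbf{n}}u,\psi\rangle_{L^2(\Gamma)}$ (the normal-derivative pairing understood in the natural $H^{-1/2}(\Gamma)$--$H^{1/2}(\Gamma)$ duality for $u\in D(\Delta)$), and the compatibility condition $\partial_{\mathbf{n}}u=\gamma$ from $D(A_\ep)$ rewrites the resulting boundary term. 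The antisymmetric cancellations among the interior terms—exactly those already exhibited in the dissipativity computation recorded for $A_\ep$—reassemble the four rows of $B$ acting on $\theta$, once a second application of Green's identity transfers the Laplacian onto $\chi$ and produces a term carrying $\partial_{\mathbf{n}}\chi$. The residual boundary integrals not absorbed into the $\delta$- and $\gamma$-slots of $\langle\zeta,B\theta\rangle_{\mathcal{H}_\ep}$ collect into a single pairing against the trace of $v$, and the boundary condition $\partial_{\mathbf{n}}\chi=-\xi$ defining $D(A_\ep^*)$ is precisely the relation that annihilates this pairing for every admissible $v$. This yields $\langle A_\ep\zeta,\theta\rangle_{\mathcal{H}_\ep}=\langle\zeta,B\theta\rangle_{\mathcal{H}_\ep}$ for all $\zeta\in D(A_\ep)$, hence $\theta\in D(A_\ep^*)$ with $A_\ep^*\theta=B\theta$.

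To obtain the reverse inclusion $A_\ep^*\subseteq B$ without characterizing by hand which $\theta$ render $\zeta\mapsto\langle A_\ep\zeta,\theta\rangle_{\mathcal{H}_\ep}$ bounded, I would show $B$ is itself m-dissipative and invoke maximality. Dissipativity, $\langle B\theta,\theta\rangle_{\mathcal{H}_\ep}\le 0$, is the same bookkeeping that produced $\langle A_\ep\zeta,\zeta\rangle_{\mathcal{H}_\ep}=-\|v\|^2-\frac{1}{\ep}\|\gamma\|^2_{L^2(\Gamma)}$, now carried out for $B$ (yielding $-\|\psi\|^2-\frac{1}{\ep}\|\xi\|^2_{L^2(\Gamma)}$), using $\partial_{\mathbf{n}}\chi=-\xi$; the range condition $R(\lambda I-B)=\mathcal{H}_\ep$ for some $\lambda>0$ follows from the Lax--Milgram theorem applied to the elliptic system $(\lambda I-B)\theta=\eta$, exactly as was done to verify the corresponding range condition for $A_\ep$. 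Since $A_\ep^*$ is dissipative and extends the m-dissipative operator $B$, and an m-dissipative operator admits no proper dissipative extension, we conclude $A_\ep^*=B$, which is the assertion.

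The main obstacle is almost entirely a matter of careful bookkeeping: the $\ep$-weight on the $\gamma$-component must be propagated consistently through every pairing so that the off-diagonal boundary terms (those mixing the traces of $v$, $\delta$, $\gamma$, and $\xi$ together with their various $1/\ep$ factors) cancel and leave only the trace-of-$v$ pairing that forces the adjoint boundary condition. The one genuinely conceptual step, as opposed to computation, is the reverse inclusion; routing it through m-dissipativity and the Lax--Milgram range argument (rather than reading the domain off from boundedness of the functional directly) is what keeps the argument clean and parallel to the generation proof for $A_\ep$.
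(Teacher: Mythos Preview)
Your proposal is correct and takes essentially the same approach the paper indicates: the paper's proof is the single line ``The proof is a calculation similar to, e.g., \cite[Lemma 3.1]{Ball04},'' and your Green's-identity computation for the inclusion $B\subseteq A_\ep^*$ is precisely that calculation spelled out. Your use of m-dissipativity and maximality to pin down the reverse inclusion is a slight (and welcome) elaboration over what the paper's citation leaves implicit, but it is not a genuinely different route---it simply makes rigorous the domain identification that the paper takes for granted.
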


\begin{proof}
The proof is a calculation similar to, e.g., \cite[Lemma 3.1]{Ball04}.
\end{proof}

Again, the definition of weak solution is from \cite{Ball77}.

\begin{definition} 
Let $T>0$. 
A map $\zeta\in C([0,T];\mathcal{H}_\ep)$ is a {\em{weak solution}} of (\ref{abstract-acoustic-problem}) on $[0,T]$ if for each $\xi\in D(A^*_\ep)$ the map $t \mapsto \langle \zeta(t),\xi \rangle_{\mathcal{H}_\ep}$ is absolutely continuous on $[0,T]$ and satisfies, for almost all $t\in[0,T]$,
\begin{equation}  \label{abs-2}
\frac{d}{dt}\langle \zeta(t),\xi \rangle_{\mathcal{H}_\ep} = \langle \zeta(t),A^*_\ep\xi \rangle_{\mathcal{H}_\ep} + \langle \mathcal{G}_\ep(\zeta(t)),\xi \rangle_{\mathcal{H}_\ep}.
\end{equation}
The map $\zeta$ is a weak solution on $[0,\infty)$ (i.e. a {\em{global weak solution}}) if it is a weak solution on $[0,T]$ for all $T>0$. 
\end{definition}

Following \cite{Ball04}, we provide the equivalent notion of a mild solution.

\begin{definition}
Let $T>0$. 
A function $\zeta:[0,T]\rightarrow\mathcal{H}_\ep$ is a weak/mild solution of (\ref{abstract-acoustic-problem}) on $[0,T]$ if and only if $\mathcal{G}_\ep(\zeta(\cdot))\in L^1(0,T;\mathcal{H}_\ep)$ and $\zeta$ satisfies the variation of constants formula, for all $t\in[0,T],$
\[
\zeta(t)=e^{A_\ep t}\zeta_0 + \int_0^t e^{A_\ep(t-s)}\mathcal{G}_\ep(\zeta(s)) ds.
\]
\end{definition}

Again, our notion of weak solution is equivalent to the standard concept of a weak (distributional) solution to Problem (A).
Indeed, since $f:H^{1}\left( \Omega \right) \rightarrow L^{2}\left( \Omega \right) $ is sequentially weakly continuous and continuous and $(\zeta_{t},\theta)\in C^{1}([0,T])$ for all $\theta \in D(A^*)$, and (\ref{abs-2}) is satisfied. 

\begin{definition}  \label{aweak}
A function $\zeta=(u,u_{t},\delta,\delta_t):[0,T]\rightarrow \mathcal{H}_\ep$ is a weak solution of (\ref{abstract-a}) (and, thus of (\ref{pde}), (\ref{ic}), (\ref{abc}) and (\ref{aic})) on $[0,T],$ if, for almost all $t\in \left[ 0,T\right],$
\begin{equation*}
\zeta =(u,u_{t},\delta,\delta_t)\in C([0,T];\mathcal{H}_\ep),
\end{equation*}
and, for each $\psi \in H^{1}(\Omega),$ $\langle u_{t},\psi \rangle \in C^{1}([0,T]) $ with
\begin{equation*}
\frac{d}{dt}\langle u_{t}(t),\psi\rangle + \langle u_{t}(t),\psi\rangle + \langle u(t),\psi \rangle_1 = -\langle f(u(t)),\psi \rangle - \langle \delta_t(t),\psi\rangle_{L^{2}(\Gamma)},
\end{equation*}
and, for each $\phi \in L^{2}(\Gamma),$ $\langle \delta_{t},\phi \rangle \in C^{1}([0,T]) $ with
\begin{align*}
\frac{d}{dt} \ep \langle \delta_{t}(t),\phi\rangle_{L^{2}(\Gamma)} + \langle \delta_{t}(t),\phi \rangle_{L^{2}(\Gamma)} + \langle \delta(t),\phi\rangle _{L^{2}(\Gamma)} = - \langle u_t(t),\phi\rangle_{L^{2}(\Gamma)}.
\end{align*}
\end{definition}

Observe, on the right-hand side of the last equation above, the derivative of $u$, with respect to $t$, holds in the distribution sense since the term $u_t$ does not possess sufficient regularity to conclude that its trace is in $L^2(\Gamma)$.
Also, recall from the previous section that $f:H^1(\Omega)\rightarrow L^2(\Omega)$ is sequentially weakly continuous and continuous.
Recall that, by \cite[Proposition 3.4]{Ball04} and Lemma \ref{adjoint-a}, $\langle \zeta,\xi \rangle_{\mathcal{H}_\varepsilon}\in C([0,T])$ for all $\xi\in D(A^*)$.

The definition of strong solution follows.
First, for each $\ep\in(0,1]$, define the space, 
\[
\mathcal{D}_\ep:=\left\{ (u,v,\delta,\gamma)\in H^2(\Omega)\times H^1(\Omega)\times H^{1/2}(\Gamma)\times H^{1/2}(\Gamma):\partial_{\bf{n}}u = \gamma ~\text{on} ~\Gamma \right\},
\]
and let $\mathcal{D}_\ep$ be equipped with the $\ep$-weighted norm whose square is given by, for all $\zeta=(u,v,\delta,\gamma)\in\mathcal{D}_\ep$, 
\[
\|\zeta\|^2_{\mathcal{D}_\ep}:=\|u\|^2_2+\|v\|^2_1+\|\delta\|^2_{H^{1/2}(\Gamma)}+ \ep \|\gamma\|^2_{H^{1/2}(\Gamma)}.
\]

\begin{definition}  \label{d:acoustic-strong}
Let $\zeta_0 = \left(u_0,u_1,\delta_0,\delta_1\right) \in \mathcal{D}_\ep$: that is, let $\zeta_0\in H^{2}(\Omega )\times H^{1}(\Omega )\times H^{1/2}(\Gamma)\times H^{1/2}(\Gamma)$ be such that the compatibility condition 
\begin{equation*}
\partial _{\bf{n}}u_{0} = \delta_1 \ \text{on} \ \Gamma
\end{equation*}
is satisfied. 
A function $\zeta (t) =(u(t),u_t(t),\delta(t),\delta_t(t))$ is called a (global) strong solution if it is a (global) weak solution in the sense of Definition \ref{aweak} and if it satisfies the following regularity properties:
\begin{equation}  \label{acoustic-regularity-property}
\begin{array}{l}
\zeta \in L^{\infty }([0,\infty);\mathcal{D}_\varepsilon)\quad\text{and}\quad\partial_t\zeta\in L^{\infty }([0,\infty);\mathcal{H}_\varepsilon).
\end{array}
\end{equation}
Therefore, $\zeta(t) = (u(t), u_t(t),\delta(t),\delta_t(t)) $ satisfies the equations (\ref{pde})-(\ref{aic}) almost everywhere; i.e., is a strong solution.
\end{definition}

The first main result in this section is due to \cite[Theorem 1]{Frigeri10}. 

\begin{theorem}  \label{t:ws-a}
Assume (\ref{reg-assf-2}) and (\ref{assf-2}) hold.
Let $\zeta_0\in\mathcal{H}_\ep$. 
For each $\ep\in(0,1]$, there exists a unique global weak solution $\zeta\in C([0,\infty);\mathcal{H}_\ep)$ to (\ref{abstract-acoustic-problem}). 
For each weak solution, the map 
\begin{equation}\label{acoustic-C1-map}
t \mapsto \|\zeta(t)\|^2_{\mathcal{H}_\ep} + 2\int_\Omega F(u(t)) dx 
\end{equation}
is $C^1([0,\infty))$ and the energy equation (\ref{acoustic-energy-3}) holds (in the sense of distributions). 
Moreover, for all $\zeta_0, \xi_0\in\mathcal{H}_\ep$, there exists a positive constant $\nu_1>0$, depending on $\|\zeta_0\|_{\mathcal{H}_\ep}$ and $\|\xi_0\|_{\mathcal{H}_\ep}$, such that for all $t\geq 0$,
\begin{equation}  \label{cd-a}
\|\zeta(t)-\xi(t)\|_{\mathcal{H}_\ep} \leq e^{\nu_1 t} \|\zeta_0 - \xi_0\|_{\mathcal{H}_\ep}.
\end{equation}
Furthermore, when (\ref{reg-assf-3}) holds and $\zeta_0\in\mathcal{D}_\ep$, then there exists a unique global strong solution $\zeta\in C([0,\infty);\mathcal{D}_\ep)$ to (\ref{abstract-acoustic-problem}). 
\end{theorem}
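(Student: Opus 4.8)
The plan is to cast the problem in the semigroup framework of Ball \cite{Ball77,Ball04} and to follow Frigeri's argument \cite{Frigeri10} for the $\ep=1$ case, carrying the parameter $\ep\in(0,1]$ through every estimate. Since $A_\ep$ generates a strongly continuous semigroup of contractions on $\mathcal{H}_\ep$ and $\mathcal{G}_\ep:\mathcal{H}_\ep\rightarrow\mathcal{H}_\ep$ is locally Lipschitz continuous, a standard contraction-mapping argument applied to the variation of constants formula produces a unique local mild solution $\zeta\in C([0,T_{\max});\mathcal{H}_\ep)$ defined on a maximal interval.

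To obtain a global solution I would rule out finite-time blow-up with an a priori bound. The energy equation (\ref{acoustic-energy-3}) shows that the functional $\|\zeta(t)\|^2_{\mathcal{H}_\ep}+2\int_\Omega F(u(t))dx$ is non-increasing, and the coercivity estimate (\ref{consf-1}) bounds it below by $\mu_0\|u\|^2_1-\kappa_f$; together these keep $\|\zeta(t)\|_{\mathcal{H}_\ep}$ bounded on every bounded time interval (in fact uniformly in $t$), so $T_{\max}=\infty$. The $C^1$ regularity of the map (\ref{acoustic-C1-map}) and the validity of the energy equation in the sense of distributions then follow as in \cite{Frigeri10}, using that $f:H^1(\Omega)\rightarrow L^2(\Omega)$ is sequentially weakly continuous; by \cite[Proposition 3.4]{Ball04} together with Lemma \ref{adjoint-a} the mild solution coincides with the weak solution of Definition \ref{aweak}.

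For continuous dependence, let $\zeta$ and $\xi$ be solutions whose data have $\mathcal{H}_\ep$-norm at most $R$, and set $w=\zeta-\xi$, so that $w_t=A_\ep w+\mathcal{G}_\ep(\zeta)-\mathcal{G}_\ep(\xi)$. Testing with $w$ in $\mathcal{H}_\ep$ and using the dissipativity $\langle A_\ep w,w\rangle_{\mathcal{H}_\ep}\leq 0$ together with the Lipschitz bound on $\mathcal{G}_\ep$ over the $R$-ball (which controls the difference of the nonlinear terms in $L^2(\Omega)$ and the linear term in $\delta$) yields $\frac{d}{dt}\|w\|^2_{\mathcal{H}_\ep}\leq 2\nu_1\|w\|^2_{\mathcal{H}_\ep}$; Gronwall's inequality then gives (\ref{cd-a}).

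The main obstacle is the construction of strong solutions under the additional assumption (\ref{reg-assf-3}). Because fractional powers of the Laplacian subject to the acoustic boundary condition are unavailable, the spatial regularity cannot be bootstrapped by a spectral or fractional-power argument; instead I would use the time-differentiation and $H^2$-elliptic-regularity strategy of \cite{Frigeri10,Pata&Zelik06}. Differentiating the system in $t$ and testing the differentiated equations produces the term $f'(u)u_t$, and it is precisely the lower bound (\ref{reg-assf-3}) that allows this term to be absorbed so as to close a uniform estimate for $(u_t,u_{tt},\delta_t,\delta_{tt})$ in $\mathcal{H}_\ep$, giving $\partial_t\zeta\in L^\infty([0,\infty);\mathcal{H}_\ep)$. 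Rewriting the interior equation as $-\Delta u=-u_{tt}-u_t-u-f(u)$ with right-hand side now controlled in $L^2(\Omega)$, and reading the second boundary relation $\gamma=\partial_{\bf{n}}u$ together with the boundary ODE to recover the $H^{1/2}(\Gamma)$-regularity of $\delta$ and $\gamma$, one applies elliptic regularity to conclude $\zeta\in L^\infty([0,\infty);\mathcal{D}_\ep)$, which is the regularity (\ref{acoustic-regularity-property}) of a strong solution. The delicate point is to confirm that these higher-order estimates, though their constants may depend on $\ep$, remain finite for each fixed $\ep\in(0,1]$, so that the strong solution is well-defined for every admissible $\ep$.
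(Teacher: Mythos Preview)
Your proposal is correct and follows essentially the same route as the paper's own proof, which in turn defers almost entirely to Frigeri \cite{Frigeri10}: local existence from the contraction semigroup plus locally Lipschitz perturbation, global existence via the energy identity and the lower bound (\ref{consf-1}), and continuous dependence and strong solutions handled exactly as you describe. If anything, you supply more detail for the continuous-dependence and strong-solution steps than the paper does, and your use of (\ref{consf-1}) for the a priori bound is the right choice (the paper's citation of (\ref{consf-2}) at that step appears to be a slip).
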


\begin{proof}
We only report the first part of the proof. 
Following \cite[Proof of Theorem 1]{Frigeri10}: The operator $A_\ep$ is the generator of a $C^0$-semigroup of contractions in $\mathcal{H}_\ep$. 
This follows from \cite{Beale76} and the Lumer-Phillips Theorem. 
Also, recall the functional $\mathcal{G}_\ep:\mathcal{H}_\ep\rightarrow\mathcal{H}_\ep$ is locally Lipschitz continuous. 
So there is $T^*>0$ and a maximal weak solution $\zeta\in C([0,T^*),\mathcal{H}_\ep)$ (cf. e.g. \cite{Zheng04}).
To show $T^*=+\infty$, observe that integrating the energy identity (\ref{acoustic-energy-3}) over $(0,t)$ yields, for all $t\in[0,T^*),$
\begin{align}
& \|\zeta(t)\|^2_{\mathcal{H}_\ep} + 2\int_\Omega F(u(t)) dx + \int_0^t \left( 2\|u_\tau(\tau)\|^2d\tau + 2\ep\|\delta_\tau(\tau)\|^2_{L^2(\Gamma)} \right) d\tau   \notag \\
&= \|\zeta_0\|^2_{\mathcal{H}_\ep} + 2\int_\Omega F(u_0) dx.    \label{fit-1} 
\end{align}
Applying (\ref{consf-2}) to (\ref{fit-1}), we find that, for all $t\in[0,T^*)$,
\begin{equation*}
\|\zeta(t)\|_{\mathcal{H}_\ep}\le C(\|\zeta_0\|_{\mathcal{H}_\ep}),
\end{equation*}
with some $C>0$ independent of $\ep$ and $t$; which of course means $T^*=+\infty.$
Moreover, we know that when $\zeta_0\in\mathcal{H}_\ep$ is such that $\|\zeta_0\|_{\mathcal{H}_\ep}\le R$ for all $\ep\in(0,1],$ then there holds the uniform bound, for all $t\ge0,$
\begin{equation}  \label{acoustic-bound}
\|\zeta(t)\|_{\mathcal{H}_\ep}\le Q(R).
\end{equation}

The remainder of the proof follows as in \cite[Theorem 1]{Frigeri10}.
\end{proof}

As above, we formalize the dynamical system associated with Problem (A).

\begin{corollary}  \label{sf-a}
Let the assumptions of Theorem \ref{t:ws-a} be satisfied.
Let $\zeta_0=(u_0,u_1,\delta_0,\delta_1)\in\mathcal{H}_\ep$ and let $u$ and $\delta$ be the unique solution of Problem (A). 
For each $\ep\in(0,1],$ the family of maps $S_\ep=(S_\ep(t))_{t\geq 0}$ defined by 
\begin{align}
&S_\ep(t)\zeta_0(x):=  \notag \\
&(u(t,x,u_0,u_1,\delta_0,\delta_1),u_t(t,x,u_0,u_1,\delta_0,\delta_1),\delta(t,x,u_0,u_1,\delta_0,\delta_1),\delta_t(t,x,u_0,u_1,\delta_0,\delta_1))  \notag
\end{align}
is the semiflow generated by Problem (A). 
The operators $S_\ep(t)$ satisfy
\begin{enumerate}
	\item $S_\ep(t+s)=S_\ep(t)S_\ep(s)$ for all $t,s\geq 0$.
	\item $S_\ep(0)=I_{\mathcal{H}_\ep}$ (the identity on $\mathcal{H}_\ep$)
	\item $S_\ep(t)\zeta_0\rightarrow S_\ep(t_0)\zeta_0$ for every $\zeta_0\in\mathcal{H}_\ep$ when $t\rightarrow t_0$.
\end{enumerate}

Additionally, each mapping $S_\ep(t):\mathcal{H}_\ep\rightarrow\mathcal{H}_\ep$ is Lipschitz continuous, uniformly in $t$ on compact intervals; i.e., for all $\zeta_0, \chi_0\in\mathcal{H}_\ep$, and for each $T\geq 0$, and for all $t\in[0,T]$,
\begin{equation}\label{sf-a-lc}
\|S_\ep(t)\zeta_0-S_\ep(t)\chi_0\|_{\mathcal{H}_\ep} \leq e^{\nu_1 T}\|\zeta_0-\chi_0\|_{\mathcal{H}_\ep}.
\end{equation}
\end{corollary}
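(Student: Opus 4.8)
The plan is to deduce every assertion from the well-posedness package of Theorem~\ref{t:ws-a}, in exact parallel with the proof of Corollary~\ref{sf-r} for Problem (T). The map $S_\ep(t)$ is well-defined as a single-valued operator precisely because Theorem~\ref{t:ws-a} furnishes, for each $\ep\in(0,1]$ and each $\zeta_0\in\mathcal{H}_\ep$, a \emph{unique} global weak solution $\zeta\in C([0,\infty);\mathcal{H}_\ep)$; uniqueness is what legitimizes the definition of the semiflow.

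First I would verify the semigroup property~(1). This is the standard consequence of uniqueness for an autonomous abstract Cauchy problem: for fixed $s\geq0$ the shifted trajectory $t\mapsto\zeta(t+s)$ solves (\ref{abstract-acoustic-problem}) with initial datum $\zeta(s)=S_\ep(s)\zeta_0$, so by the uniqueness in Theorem~\ref{t:ws-a} it coincides with $t\mapsto S_\ep(t)\bigl(S_\ep(s)\zeta_0\bigr)$, yielding $S_\ep(t+s)=S_\ep(t)S_\ep(s)$. Property~(2) is immediate from the variation-of-constants formula evaluated at $t=0$. For the general abstract framework I would cite the same sources invoked for Corollary~\ref{sf-r}, e.g.\ \cite{Babin&Vishik92,Milani&Koksch05}. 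Property~(3), continuity in $t$, follows at once from the membership $\zeta\in C([0,\infty);\mathcal{H}_\ep)$ asserted in Theorem~\ref{t:ws-a}, which simultaneously gives strong continuity at $t_0=0$; hence $(S_\ep(t))_{t\geq0}$ is a $C^0$-semiflow on $\mathcal{H}_\ep$.

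Finally, the uniform Lipschitz estimate (\ref{sf-a-lc}) is read directly off the continuous-dependence bound (\ref{cd-a}): writing $\zeta,\xi$ for the solutions issuing from $\zeta_0,\chi_0$, one has $\|\zeta(t)-\xi(t)\|_{\mathcal{H}_\ep}\leq e^{\nu_1 t}\|\zeta_0-\chi_0\|_{\mathcal{H}_\ep}$, and since $e^{\nu_1 t}\leq e^{\nu_1 T}$ for $t\in[0,T]$ the claimed inequality follows. The one point demanding care---and the closest thing to an obstacle in an otherwise routine argument---is the dependence of $\nu_1$ on the data: (\ref{cd-a}) delivers $\nu_1=\nu_1(\|\zeta_0\|_{\mathcal{H}_\ep},\|\chi_0\|_{\mathcal{H}_\ep})$, so to read (\ref{sf-a-lc}) as a genuine locally uniform Lipschitz bound one restricts the initial data to a ball of radius $R$ and sets $\nu_1=\nu_1(R)$, exactly as $\nu_0=\nu_0(R)$ enters Corollary~\ref{sf-r}. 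Here the uniform-in-$\ep$ a priori bound (\ref{acoustic-bound}) is what keeps the constant controlled along the whole trajectory, and tracking this dependence is the sole substantive bookkeeping step.
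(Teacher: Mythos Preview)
Your proposal is correct and follows essentially the same approach as the paper: the paper's proof simply refers back to Corollary~\ref{sf-r} and notes that the Lipschitz continuity follows from (\ref{cd-a}). Your additional remark about the dependence $\nu_1=\nu_1(R)$ on the size of the initial data is a sensible clarification, though the paper does not spell it out here.
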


\begin{proof}
The proof is not much different from the proof of Corollary \ref{sf-r} above.
The Lipschitz continuity property follows from (\ref{cd-a}).
\end{proof}

The dynamical system $(S_\ep(t),\mathcal{H}_\ep)$ is shown to admit a positively invariant, bounded absorbing set in $\mathcal{H}_\ep$. 
The argument follows \cite[Theorem 2]{Frigeri10}.

\begin{lemma}  \label{t:a-abs-set}
Assume (\ref{reg-assf-2}) and (\ref{assf-2}) hold.
For each $\ep\in(0,1]$, there exists $R_{1}>0$, independent of $\ep$, such that the following holds: for every $R>0$, there exists $t_{1}=t_1(R) \ge 0$, independent of $\ep$ but depending on $R$, so that, for all $\zeta_0\in\mathcal{H}_\ep$ with $\|\zeta_0\|_{\mathcal{H}_\ep} \le R$ for every $\ep\in(0,1]$, and for all $t\geq t_{1}$,
\begin{equation}  \label{acoustic-radius}
\|S_\ep(t)\zeta_0\|_{\mathcal{H}_\ep} \le R_{1}.
\end{equation}
Furthermore, for each $\ep\in(0,1],$ the set 
\begin{equation}  \label{abss-1}
\mathcal{B}_\ep:=\{ \zeta\in \mathcal{H}_\ep : \|\zeta\|_{\mathcal{H}_\ep} \leq R_{1} \}
\end{equation}
is closed, bounded, absorbing, and positively invariant for the dynamical system $(S_\ep,\mathcal{H}_\ep)$.
\end{lemma}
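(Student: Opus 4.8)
The plan is to build a perturbed‑energy (Lyapunov) functional $\Lambda_\ep$ whose time derivative obeys a dissipative differential inequality $\frac{d}{dt}\Lambda_\ep+\omega\Lambda_\ep\le C$ with $\omega,C>0$ \emph{independent of} $\ep$, and then to read off (\ref{acoustic-radius}) by Gronwall's inequality. I would first carry out all of the multiplier manipulations below for strong solutions (so that each integration by parts is justified), and then extend the resulting estimate to arbitrary $\zeta_0\in\mathcal{H}_\ep$ by density together with the continuous‑dependence bound (\ref{cd-a}); the energy equation (\ref{acoustic-energy-3}) itself already holds for every weak solution by Theorem \ref{t:ws-a}.

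The starting point is (\ref{acoustic-energy-3}), whose dissipation $2\|u_t\|^2+2\|\delta_t\|^2_{L^2(\Gamma)}$ carries \emph{no} factor of $\ep$; this is precisely the structural feature that makes all subsequent estimates uniform in $\ep$. Since this dissipation controls only the velocities $u_t,\delta_t$, I would recover the positions $u,\delta$ with two multipliers. Multiplying (\ref{pde}) by $u$ in $L^2(\Omega)$ and using $\partial_{\bf{n}}u=\delta_t$ gives
\[
\tfrac{d}{dt}\langle u_t,u\rangle + \|u\|_1^2 + \langle f(u),u\rangle = \|u_t\|^2 + \langle\delta_t,u\rangle_{L^2(\Gamma)} - \langle u_t,u\rangle,
\]
while differentiating $\ep^2\langle\delta_t,\delta\rangle_{L^2(\Gamma)}$ and invoking (\ref{abc})$_1$ in the form $\ep^2\delta_{tt}=-u_t-\delta_t-\delta$ gives
\[
\tfrac{d}{dt}\,\ep^2\langle\delta_t,\delta\rangle_{L^2(\Gamma)} = -\|\delta\|^2_{L^2(\Gamma)} + \ep^2\|\delta_t\|^2_{L^2(\Gamma)} - \langle u_t,\delta\rangle_{L^2(\Gamma)} - \langle\delta_t,\delta\rangle_{L^2(\Gamma)}.
\]

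The hard part is the term $\langle u_t,\delta\rangle_{L^2(\Gamma)}$: the dissipation controls $u_t$ only in $L^2(\Omega)$, so the trace $u_t\!\mid_\Gamma$ is \emph{not} controlled and this cross term cannot be absorbed directly — unlike in Problem (T), where the boundary damping $\partial_{\bf{n}}u=-u_t$ supplies $\|u_t\|_{L^2(\Gamma)}$. The device I would use is to include the mixed multiplier $\langle u,\delta\rangle_{L^2(\Gamma)}$, whose derivative $\frac{d}{dt}\langle u,\delta\rangle_{L^2(\Gamma)}=\langle u_t,\delta\rangle_{L^2(\Gamma)}+\langle u,\delta_t\rangle_{L^2(\Gamma)}$ cancels the offending term exactly and replaces it by $\langle u,\delta_t\rangle_{L^2(\Gamma)}$, which is harmless since it pairs the trace of $u$ (bounded by $\|u\|_1$) against the dissipated $\delta_t$. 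Accordingly I set
\[
\Lambda_\ep := \|\zeta\|^2_{\mathcal{H}_\ep} + 2\int_\Omega F(u)\,dx + \eta\left( \langle u_t,u\rangle + \ep^2\langle\delta_t,\delta\rangle_{L^2(\Gamma)} + \langle u,\delta\rangle_{L^2(\Gamma)} \right)
\]
for a small $\eta>0$. Using Young's inequality, the trace inequality, $\ep\le1$, and the lower bound (\ref{consf-1}) on $\int_\Omega F(u)$, one checks that for $\eta$ small $\Lambda_\ep$ is equivalent to $\|\zeta\|^2_{\mathcal{H}_\ep}$ up to additive constants, with equivalence constants independent of $\ep$ (the upper bound uses the growth (\ref{reg-assf-2}) together with $H^1(\Omega)\hookrightarrow L^4(\Omega)$ to dominate $\int_\Omega F(u)$ by an increasing function of $\|\zeta\|_{\mathcal{H}_\ep}$).

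Adding (\ref{acoustic-energy-3}) to $\eta$ times the three multiplier identities, the term $\langle u_t,\delta\rangle_{L^2(\Gamma)}$ cancels as arranged, and every remaining cross term pairs a dissipated quantity ($u_t$ or $\delta_t$, with $\ep$‑free dissipation) against a coercive one ($\|u\|_1$ via trace, or $\|\delta\|_{L^2(\Gamma)}$); handling $\langle f(u),u\rangle$ by (\ref{consf-3}) and splitting all products by Young's inequality with $\ep$‑free weights yields, for $\eta$ sufficiently small,
\[
\tfrac{d}{dt}\Lambda_\ep + \omega\,\|\zeta\|^2_{\mathcal{H}_\ep} \le C,
\]
and hence, by the equivalence, $\frac{d}{dt}\Lambda_\ep + \omega'\Lambda_\ep \le C'$ with $\omega',C'>0$ independent of $\ep$. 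Gronwall's inequality then gives $\|\zeta(t)\|^2_{\mathcal{H}_\ep}\le Q(\|\zeta_0\|_{\mathcal{H}_\ep})e^{-\omega' t}+P_1$ with $P_1$ independent of $\ep$, whence (\ref{acoustic-radius}) follows with $R_1^2:=P_1+1$ and an entry time $t_1=t_1(R)$ depending only on $R$ (through $Q(R)$) and on $\omega'$ — both independent of $\ep$. Finally $\mathcal{B}_\ep$ is closed and bounded by definition and absorbing by (\ref{acoustic-radius}); positive invariance follows from the standard observation that the sublevel set $\{\Lambda_\ep\le C'/\omega'\}$ cannot be exited (there $\frac{d}{dt}\Lambda_\ep\le0$) and, after the equivalence, is comparable to $\mathcal{B}_\ep$, or else by replacing $\mathcal{B}_\ep$ with the closure of its forward orbit, as usual.
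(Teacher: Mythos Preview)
Your multiplier construction is essentially the paper's: the paper multiplies (\ref{pde}) by $u_t+\eta u$ and (\ref{abc})$_1$ by $\delta_t+\eta\delta$, which is the same as your splitting into the energy identity plus $\eta$ times the position multipliers, and both rely on exactly the device you isolate --- adding $\eta\langle u,\delta\rangle_{L^2(\Gamma)}$ to the functional so that its time derivative cancels the uncontrollable trace term $\langle u_t,\delta\rangle_{L^2(\Gamma)}$.

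There is, however, a gap at your Gronwall step. From $\frac{d}{dt}\Lambda_\ep+\omega\|\zeta\|^2_{\mathcal{H}_\ep}\le C$ you pass to $\frac{d}{dt}\Lambda_\ep+\omega'\Lambda_\ep\le C'$ ``by the equivalence,'' but the equivalence you have is one-sided: the lower bound $\Lambda_\ep\ge c\|\zeta\|^2_{\mathcal{H}_\ep}-\kappa_f$ is linear, while the upper bound on $\int_\Omega F(u)\,dx$ is quartic in $\|u\|_1$ (as you yourself say, only ``an increasing function'' of $\|\zeta\|_{\mathcal{H}_\ep}$). Hence $\omega\|\zeta\|^2_{\mathcal{H}_\ep}$ does not dominate $\omega'\Lambda_\ep$ up to an additive constant independent of the trajectory, and a naive Gronwall would give an absorbing radius depending on $R$, defeating the point. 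The paper avoids this by not passing through Gronwall at all: it stops at $\frac{d}{dt}E_\ep+m_1\|\zeta\|^2_{\mathcal{H}_\ep}\le M_1$ and invokes Proposition~\ref{t:diff-ineq-1} (the Belleri--Pata lemma), which needs only the quadratic lower bound $E_\ep\ge C_1\|\zeta\|^2_{\mathcal{H}_\ep}-\kappa_f$ together with the (nonlinear) upper bound $E_\ep(\zeta_0)\le C_2 R(1+R^3)$, and delivers directly an $R$-independent absorbing radius $R_1$ and an $R$-dependent entry time $t_1$. Replacing your Gronwall appeal with this lemma fixes the argument.
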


\begin{proof}
The proof for the case $\ep=1$ is given in \cite{Frigeri10}.
We follow the argument to show the perturbation case with arbitrary $\ep\in(0,1]$.
Careful treatment must be given to the constants appearing the argument.
As with Problem (T), the proof relies on Proposition \ref{t:diff-ineq-1}.
For each $\ep\in(0,1]$ and $\zeta=(u,v,\delta,\gamma)\in\mathcal{H}_\ep$, define the functional, $E_\ep:\mathcal{H}_\ep\rightarrow\mathbb{R}$ by
\begin{equation}  \label{acoustic-functional}
E_\ep(\zeta):=\|\zeta\|^2_{\mathcal{H}_\ep}+2\eta\langle u,\delta \rangle_{L^2(\Gamma)}+2\int_\Omega F(u)dx,
\end{equation}
where $\eta>0$ is some constant that will be chosen below.
By (\ref{acoustic-C1-map}), it is not hard to see that $E_\ep(\zeta(\cdot))\in C^1([0,\infty))$.
Following the proof of \cite[Theorem 2]{Frigeri10}, the claim follows once we can show that there holds, for each $\ep\in(0,1]$ and for almost all $t\ge0$,
\begin{equation}  \label{qaz-1}
\frac{d}{dt}E_\ep(\zeta(t)) + m_1\|\zeta(t)\|^2_{\mathcal{H}_\ep}\le M_{1},
\end{equation}
for some suitable positive constants $m_1$ and $M_1$ (both independent of $\ep$).
Indeed, by multiplying (\ref{pde}) with $w:=u_t+\eta u$ in $L^2(\Omega)$, we first obtain, 
\begin{align}
\frac{1}{2}\frac{d}{dt} & \left\{ \|u\|^2_1 + \|w\|^2 + 2\int_\Omega F(u)dx \right\} + \eta\|u\|^2_1 + (1-\eta)\|w\|^2   \notag  \\
& - \eta(1-\eta)\langle u,w\rangle - \langle \delta_t,u_t \rangle_{L^2(\Gamma)} - \eta\langle \delta_t,u \rangle_{L^2(\Gamma)} + \eta\langle f(u),u \rangle = 0.  \notag
\end{align}
Multiplying (\ref{abc}) with $\theta:=\delta_t+\eta\delta$ in $L^2(\Gamma)$, where $\eta>0$ is yet to be determined, yields
\begin{align}
\frac{1}{2}\frac{d}{dt} & \left\{ \|\delta\|^2_{L^2(\Gamma)} + \ep\|\theta\|^2_{L^2(\Gamma)} \right\} + \eta \|\delta\|^2_{L^2(\Gamma)} + (1-\ep\eta)\|\theta\|^2_{L^2(\Gamma)}  \notag  \\ 
& - \eta(1-\ep\eta) \langle \delta,\theta \rangle_{L^2(\Gamma)} = -\langle u_t,\delta_t \rangle_{L^2(\Gamma)} - \eta\langle u_t,\delta \rangle_{L^2(\Gamma)}.  \notag
\end{align}
Summing the above identities, with
\begin{equation}  \label{simple-id}
\frac{1}{2}\frac{d}{dt} \left\{ 2\eta\langle u,\delta \rangle_{L^2(\Gamma)} \right\} = \eta\langle u_t,\delta\rangle_{L^2(\Gamma)} + \eta\langle u,\delta_t\rangle_{L^2(\Gamma)},
\end{equation}
we obtain, for almost all $t\geq0,$
\begin{align}
\frac{1}{2}\frac{d}{dt} & \left\{ \|u\|^2_1+\|w\|^2+\|\delta\|^2_{L^2(\Gamma)}+\ep\|\theta\|^2_{L^2(\Gamma)} \right.  \notag \\
& + \left. 2\eta\langle u,\delta\rangle_{L^2(\Gamma)}+2\int_\Omega F(u)dx \right\}  \notag \\
& + \eta\|u\|^2_1+(1-\eta)\|w\|^2-\eta(1-\eta)\langle u,w\rangle  \notag \\
& + \eta\|\delta\|^2_{L^2(\Gamma)} + \left(\frac{1}{\ep}-\eta\right) \ep \|\theta\|^2_{L^2(\Gamma)} - \eta(1-\ep\eta)\langle\delta,\theta\rangle_{L^2(\Gamma)}  \notag  \\
& - 2\eta\langle u,\theta \rangle_{L^2(\Gamma)} + 2\eta^2\langle u,\delta \rangle_{L^2(\Gamma)} + \eta\langle f(u),u\rangle  \notag  \\ 
& = 0.  \label{qaz-2}
\end{align}
With Young's inequality and (\ref{consf-2}), estimating the five products for any $0<\eta<1$ and for each $\ep\in(0,1]$,
\begin{align}
-\eta(1-\eta)\langle u,w\rangle \ge -\eta^2\|u\|^2_1 - \frac{1}{4}\|w\|^2,  \label{qaz-3} 
\end{align}
\begin{align}
-\eta(1-\ep\eta)\langle\delta,\theta\rangle_{L^2(\Gamma)} \ge -\frac{\eta}{2} \|\delta\|^2_{L^2(\Gamma)} - \frac{\eta}{2\ep} \ep \|\theta\|^2_{L^2(\Gamma)},  \label{qaz-4}
\end{align}
\begin{align}
-2\eta\langle u,\theta \rangle_{L^2(\Gamma)} \ge - \eta^2\|u\|^2_1 - \frac{\eta C_{\overline{\Omega}}}{\ep} \ep \|\theta\|^2_{L^2(\Gamma)},  \label{fook-1}
\end{align}
\begin{align}
2\eta^2\langle u,\delta \rangle_{L^2(\Gamma)} \ge -\eta^2C_{\overline{\Omega}} \|u\|^2_1 - \frac{\eta^2}{4} \|\delta\|^2_{L^2(\Gamma)},  \label{fook-2}
\end{align}
and
\begin{align}
\eta\langle f(u),u\rangle \ge -\eta(1-\mu_0)\|u\|^2_1 - \eta\kappa_f.  \label{qaz-5}
\end{align}
Recall, $\mu_0$ is due to (\ref{consf-1}), and $C_{\overline{\Omega}}>0$ is the constant due to the trace embedding $H^1(\Omega)\hookrightarrow L^2(\Gamma).$
Together, (\ref{qaz-2})-(\ref{qaz-5}) produce, 
\begin{align}
\frac{1}{2}\frac{d}{dt} & \left\{ \|u\|^2_1+\|w\|^2+\|\delta\|^2_{L^2(\Gamma)}+\ep\|\theta\|^2_{L^2(\Gamma)} \right.  \notag  \\
& + \left. 2\eta\langle u,\delta\rangle_{L^2(\Gamma)}+2\int_\Omega F(u)dx \right\}  \notag \\
& + \eta \left( \mu_0 - \eta\left( 2+C_{\overline{\Omega}} \right) \right)\|u\|^2_1 + \left( \frac{3}{4} - \eta \right)\|w\|^2  \notag  \\
& + \frac{\eta}{2} \left( 1 - \frac{\eta}{2} \right) \|\delta\|^2_{L^2(\Gamma)} + \frac{1}{\ep}\left( 1 - \eta\left( \ep + \frac{1}{2} + C_{\overline{\Omega}} \right) \right) \ep\|\theta\|^2_{L^2(\Gamma)}  \notag  \\
& \le \eta \kappa_f.  \notag  
\end{align}
Define 
\begin{equation}  \label{ep-dep}
\eta_{1}:=\min\left\{ \frac{\mu_0}{2+C_{\overline{\Omega}}},\frac{3}{4},\frac{2}{3+2C_{\overline{\Omega}}} \right\} > 0.
\end{equation}
Then choose $\eta=\eta_1$.
With this, also set,
\begin{align}
m_* := \min \left\{ \mu_0 - \eta_1\left( 2+C_{\overline{\Omega}} \right), \frac{3}{4} - \eta_1, \frac{\eta_1}{2} \left( 1 - \frac{\eta_1}{2} \right),1 - \eta_1\left( \frac{3}{2} + C_{\overline{\Omega}} \right) \right\}.   \label{ep-dep-2}
\end{align}
Note that both $\eta_1$ and $m_*$ are independent of $\ep.$ 
Moreover, we can write 
\begin{align}
\frac{d}{dt} & E_\ep(\zeta) + 2m_* \|\zeta\|^2_{\mathcal{H}_\ep} \le 2\eta_1 \kappa_f,  \label{qaz-9}  
\end{align}
and the estimate (\ref{qaz-9}) can be written in the form (\ref{qaz-1}) with
\begin{equation}  \label{ep-dep-1}
m_{1} := 2m_* \quad \text{and} \quad M_{1} := 2\eta_1 \kappa_f.
\end{equation}

We now show the existence of the absorbing set $\mathcal{B}_\ep$ in (\ref{abss-1}) now follows from Proposition \ref{t:diff-ineq-1}.
First, with Young's inequality,
\begin{align}
2\eta |\langle u,\delta \rangle_{L^2(\Gamma)}| & \le 2\eta_1^2 C_{\overline{\Omega}} \|u\|^2_{1} + \frac{1}{2} \|\delta\|^2_{L^2(\Gamma)},  \label{wer-1}
\end{align}
where $C_{\overline{\Omega}}$ is due to the trace embedding, $H^1(\Omega)\hookrightarrow L^2(\Gamma)$.
We may update the smallness of $\eta$ (that is, $\eta_1$ and hence $m_1$) so that, with (\ref{consf-2}), we are able to find constants $C_{1},C_2>0$, both independent of $\ep$, so that the functional $E_\ep(\zeta)$ from (\ref{acoustic-functional}) satisfies, for each $\ep\in(0,1],$ and for each $\zeta=(u,v,\delta,\gamma)\in\mathcal{H}_\ep,$
\begin{equation}  \label{pre-ab}
C_{1}\|\zeta\|^2_{\mathcal{H}_\ep} - \kappa_f \le E_\ep(\zeta) \le C_{2}\|\zeta\|_{\mathcal{H}_\ep}(1+\|\zeta\|^3_{\mathcal{H}_\ep}).
\end{equation}
Thus, for all $\zeta_0\in \mathcal{H}_\ep$ with $\|\zeta_0\|_{\mathcal{H}_\ep} \le R$, for some $R>0$, the upper-bound in (\ref{pre-ab}) is then 
\begin{equation}  \label{zupper-1}
E_\ep(\zeta_0) \le C_{2} R(1+R^3).
\end{equation}
We have established that, for all $R>0$, there exists $\widetilde R>0$, independent of $\ep$, such that, for all $\zeta_0\in\mathcal{H}_\ep$ with $\|\zeta_0\|_{\mathcal{H}_\ep} \le R$, then $E_\ep(\zeta_0)\le \widetilde R.$
From the lower-bound in (\ref{pre-ab}), we find 
\begin{equation}
\sup_{t\ge0}E_\ep(\zeta(t)) \ge \kappa_f.
\end{equation}
By Proposition \ref{t:diff-ineq-1}, for each $\ep\in(0,1]$ and for all $\zeta_0\in \mathcal{H}_\ep$ with $\|\zeta_0\|_{\mathcal{H}_\ep} \le R$, there exists $t_{1}>0$, independent of $\ep$, $\kappa_f$, and $R$, such that, for all $t\ge t_{1}$ and for all $\ep\in(0,1],$
\begin{align}
E_\ep(S_\ep(t)\zeta_0) & \le \sup_{\zeta\in \mathcal{H}_\ep} \left\{ E_\ep(\zeta) : m_{1} \|\zeta\|^2_{\mathcal{H}_\ep} \le M_{1}+\iota \right\}  \notag \\ 
& = \sup_{\zeta\in \mathcal{H}_\ep} \left\{ E_\ep(\zeta) : \|\zeta\|^2_{\mathcal{H}_\ep} \le \frac{1}{m_1} \left( \eta_1\kappa_f + \iota \right) \right\},  \label{wer-2}
\end{align}
for any $\iota>0.$
Thus, using $(\ref{pre-ab})$ we find that for all $\zeta_0\in \mathcal{H}_\ep$ with $\|\zeta_0\|_{\mathcal{H}_\ep} \le R$, for $R>0$, there is $R_{1}>0$ and $t_{1}>0$, both depending on both $R$, but independent of $\ep$, such that for all $t\geq t_{1}$, (\ref{acoustic-radius}) holds.
By definition, the set $\mathcal{B}_\ep$ in (\ref{abss-1}) is closed and bounded in $\mathcal{H}_\ep$. 
We have also shown that $\mathcal{B}_\ep$ is absorbing in the following sense: for any nonempty bounded subset $B$ of $\mathcal{H}_\ep$, there is a $t_{1}\geq 0$, depending on $B$, in which for all $t\geq t_{1}$, $S_\ep(t)B\subseteq \mathcal{B}_\ep$. 
Consequently, since $\mathcal{B}_\ep$ is bounded, $\mathcal{B}_\ep$ is also positively invariant under the semiflow $S_\ep$.

By Proposition \ref{t:diff-ineq-1}, $t_{1}=t_{1}(\iota)$, for $\iota>0$, depends on the parameters of (\ref{consf-2}) and of course $R>0$ as, 
\begin{align}
t_{1}(\iota) & := \frac{1}{\iota}\left( C_{2}R(1+R^3) + \kappa_f\right).  \notag
\end{align}
Furthermore, after some calculation, we find the radii of $\mathcal{B}_\ep$ to be given by, for all $\ep\in(0,1]$ and $\iota>0,$
\begin{align}
& R_{1}(\iota) := C^{-1/2}_1 \left( C_{2} \left( \frac{\eta_1\kappa_f}{m_*}+\frac{\iota}{2m_*} \right)^{1/2} \left( 1+ \left( \frac{\eta_1\kappa_f}{m_*}+\frac{\iota}{2m_*} \right)^{3/2} + \kappa_f \right) \right)^{1/2}.  \notag
\end{align}
We now fix $\iota=2m_*.$ 
Then
\[
t_{1}:=\frac{1}{2m_*}\left( C_2R(1+R^3)+\kappa_f\right)
\]
(which is independent of $\ep$).
Moreover, 
\begin{align}
R_{1}:= C^{-1/2}_1 \left( C_{2} \left( \frac{\eta_1\kappa_f}{m_*}+1 \right)^{1/2} \left( 1+ \left( \frac{\eta_1\kappa_f}{m_*}+1 \right)^{3/2} + \kappa_f \right) \right)^{1/2}    \label{g-attr-bound}
\end{align}
is also independent of $\ep.$
This concludes the proof.
\end{proof}

The existence of a global attractor leverages the existence of a regular absorbing set, for each $\ep\in(0,1],$ for the Problem (A) (cf. \cite{Babin&Vishik92} or \cite{Temam88}).

\begin{theorem}  \label{optimal-a}
Assume (\ref{reg-assf-2}), (\ref{assf-2}), and (\ref{reg-assf-3}) hold.
For each $\ep\in(0,1]$, there exists a closed and bounded subset $\mathcal{U}_\ep\subset \mathcal{D}_\ep$, such that for every nonempty bounded subset $B\subset \mathcal{H}_\ep$,
\begin{equation}
dist_{\mathcal{H}_\ep}(S_\ep(t)B,\mathcal{U}_\ep) \le Q_\ep(\left\Vert B\right\Vert _{\mathcal{H}_\ep}) e^{-\omega_{2\ep} t},  \label{tran-2}
\end{equation}
for some nonnegative monotonically increasing function $Q_{\ep}(\cdot)$ and for some positive constant $\omega_{2\ep}>0$, both depending on $\ep$ where $Q_\ep(\cdot)\sim\ep^{-1}$ and $\omega_{2\ep}\sim\ep^{-1}.$
\end{theorem}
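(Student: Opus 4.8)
The plan is to follow the $H^2$-elliptic regularity method of Pata and Zelik, as adapted by Frigeri \cite{Frigeri10} for the case $\ep=1$, the essential new task being to track how every constant depends on the perturbation parameter. By Lemma \ref{t:a-abs-set} the ball $\mathcal{B}_\ep$ is a bounded, positively invariant, absorbing set whose radius $R_1$ is \emph{independent} of $\ep$, and it absorbs every bounded $B\subset\mathcal{H}_\ep$ at a rate governed by the differential inequality used there. Consequently it suffices to produce a set $\mathcal{U}_\ep$, bounded in $\mathcal{D}_\ep$, that attracts $\mathcal{B}_\ep$ exponentially in $\mathcal{H}_\ep$; the estimate (\ref{tran-2}) for an arbitrary bounded $B$ then follows by the transitivity of exponential attraction together with the Lipschitz bound (\ref{sf-a-lc}). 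Throughout we may therefore assume $\|\zeta(t)\|_{\mathcal{H}_\ep}\le R_1$ for all $t\ge 0$, and in particular $\|u(t)\|_1\le R_1$.

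First I would derive the higher-order dissipative estimate by differentiating (\ref{pde})--(\ref{abc}) in time. Writing $w:=u_t$ and $\sigma:=\delta_t$, the pair solves the linearized acoustic system $w_{tt}+w_t-\Delta w+w+f'(u)w=0$ in $\Omega$ and $\ep\sigma_{tt}+\sigma_t+\sigma=-w_t$, $\sigma_t=\partial_{\bf{n}}w$ on $\Gamma$. Testing the interior equation with $2(w_t+\eta w)$ in $L^2(\Omega)$ and the boundary equation with $2(\sigma_t+\eta\sigma)$ in $L^2(\Gamma)$, and adding the identity for $\tfrac{d}{dt}\{2\eta\langle w,\sigma\rangle_{L^2(\Gamma)}\}$ exactly as in the proof of Lemma \ref{t:a-abs-set}, produces a functional $\Lambda_\ep(t)$ equivalent to $\|(w,w_t,\sigma,\sigma_t)\|^2_{\mathcal{H}_\ep}$ obeying $\frac{d}{dt}\Lambda_\ep+c_\ep\Lambda_\ep\le C_\ep$. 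The only genuinely nonlinear term, $\langle f'(u)w,w_t\rangle=\langle f'(u)u_t,u_{tt}\rangle$, is handled by writing it as $\frac{d}{dt}\tfrac12\langle f'(u)u_t,u_t\rangle-\tfrac12\int_\Omega f''(u)u_t^3\,dx$: the first piece is absorbed into $\Lambda_\ep$, where it is bounded below by means of (\ref{reg-assf-3}), while the remainder is controlled using the growth bound (\ref{reg-assf-2}) and the critical embedding $H^1(\Omega)\hookrightarrow L^6(\Omega)$ in $\mathbb{R}^3$ together with $\|u\|_1\le R_1$. This is the step where $f\in C^2$ and all of (\ref{reg-assf-2})--(\ref{reg-assf-3}) are essential; choosing the multiplier $\eta$ by a smallness condition of the type (\ref{ep-dep})--(\ref{ep-dep-1}) keeps the coercivity constants positive.

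Integrating yields $\Lambda_\ep(t)\le\Lambda_\ep(0)e^{-c_\ep t}+C_\ep/c_\ep$. For strong solutions ($\zeta_0\in\mathcal{D}_\ep$) this already furnishes a bounded set in the higher-order topology; to reach it from rough data $\zeta_0\in\mathcal{B}_\ep$ one splits $\zeta=\zeta^d+\zeta^c$ with $\zeta^d(t)=e^{A_\ep t}\zeta_0$ (decaying in $\mathcal{H}_\ep$ by the Lemma \ref{t:a-abs-set} multiplier applied to the homogeneous equation) and $\zeta^c(0)=0$, $\partial_t\zeta^c(0)=\mathcal{G}_\ep(\zeta_0)$ bounded by $Q(R_1)$, so that the higher-order estimate applies to the regular part $\zeta^c$. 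The resulting bounds $u_t\in L^\infty(H^1(\Omega))$ and $u_{tt}\in L^\infty(L^2(\Omega))$ give $u_t|_\Gamma\in H^{1/2}(\Gamma)$; applying $(-\Delta_\Gamma)^{1/4}$ to the boundary equation (\ref{abc})$_1$ and estimating in $L^2(\Gamma)$ with (\ref{LB-norm}) propagates this to $\delta,\delta_t\in H^{1/2}(\Gamma)$, whence $\partial_{\bf{n}}u=\delta_t\in H^{1/2}(\Gamma)$. Since $f(u)\in L^\infty(L^2(\Omega))$ by (\ref{reg-assf-2}) and $H^1(\Omega)\hookrightarrow L^6(\Omega)$, reading (\ref{pde}) as the elliptic problem $-\Delta u+u=-u_{tt}-u_t-f(u)\in L^2(\Omega)$ with Neumann datum $\partial_{\bf{n}}u=\delta_t$, $H^2$-elliptic regularity bounds $u$ in $H^2(\Omega)$. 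These bounds define a closed bounded ball $\mathcal{U}_\ep\subset\mathcal{D}_\ep$ and establish (\ref{tran-2}).

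The principal obstacle is the $\ep$-uniform bookkeeping. The $\ep$-dependence is forced by the singular structure of the boundary equation: the explicit factor in the fourth component $-(1-\tfrac1\ep)\delta$ of $\mathcal{G}_\ep$ (and its time derivative $-(1-\tfrac1\ep)\delta_t$, which enters the forcing of the differentiated system), together with the order-$1/\ep$ boundary dissipation coming from the weight on $\ep\sigma_{tt}$, are what produce the dependence of $Q_\ep$ and $\omega_{2\ep}$ on $\ep$ recorded in the statement. One must therefore repeat the delicate selection of $\eta$ from (\ref{ep-dep})--(\ref{ep-dep-1}) for the differentiated system while keeping the relevant constants positive and correctly tracking their $\ep$-growth. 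Controlling the critical term $\langle f'(u)u_t,u_{tt}\rangle$ without spoiling this $\ep$-uniformity is the secondary difficulty, and it is precisely this $\ep^{-1}$ behaviour that, as in \cite{Gal&Shomberg15}, obstructs any robustness or H\"older continuity of the resulting exponential attractors.
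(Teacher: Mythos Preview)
Your overall plan---differentiate in time, run the multiplier argument on the differentiated system, then close with $H^2$-elliptic regularity while tracking the $\ep$-dependence---is exactly what the paper does for the compactness step. The gap is in the specific decomposition you choose to pass from strong to weak data.

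You take $\zeta^d(t)=e^{A_\ep t}\zeta_0$ and put all of the nonlinearity into $\zeta^c$. The interior equation for $u^c$ then reads $u^c_{tt}+u^c_t-\Delta u^c+u^c=-f(u)$, and after differentiating in time the forcing becomes $-f'(u)u_t$, with $u_t=u^d_t+u^c_t$. Testing by $W_t:=u^c_{tt}$ produces, besides the ``good'' term $\langle f'(u)u^c_t,u^c_{tt}\rangle$ that your identity handles, the cross-term
\[
\langle f'(u)\,u^d_t,\,u^c_{tt}\rangle .
\]
Here $u^d_t$ is only in $L^2(\Omega)$ (second component of the contraction semigroup acting on $\zeta_0\in\mathcal{H}_\ep$) and, at the \emph{critical} growth (\ref{reg-assf-2}), $f'(u)$ is only in $L^3(\Omega)$. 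There is no way to place $f'(u)u^d_t$ in $L^2(\Omega)$, nor to integrate by parts in time without generating $u^d_{tt}$, so this term is uncontrollable and the higher-order estimate for $\zeta^c$ does not close. The linear/nonlinear split works for subcritical $f$, but not here.

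The paper avoids this by using the Pata--Zelik split: set $\psi(s)=f(s)+\beta s$ with $\beta\ge\vartheta$ so that $\psi'\ge 0$, and decompose $\zeta=\xi+\chi$ via the coupled systems (\ref{pde-v})--(\ref{pde-w}). The compact part $\chi=(w,w_t,\theta,\theta_t)$ carries a \emph{self-contained} nonlinearity $\psi(w)$, so the differentiated equation involves only $\psi'(w)w_t$, and your identity $\langle\psi'(w)w_t,w_{tt}\rangle=\tfrac{d}{dt}\tfrac12\langle\psi'(w)w_t,w_t\rangle-\tfrac12\int_\Omega\psi''(w)w_t^3\,dx$ now applies without cross-terms; the remainder is bounded via Lemma~\ref{t:Gronwall-bound}. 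The decaying part $\xi=(v,v_t,\gamma,\gamma_t)$ carries $\psi(u)-\psi(w)$, and its exponential decay (Lemma~\ref{t:uniform-decay}) uses the monotonicity $\langle\psi(u)-\psi(w),\,u-w\rangle\ge 0$ rather than linearity. With this correction the rest of your outline (the $H^{1/2}(\Gamma)$ bootstrap via $(-\Delta_\Gamma)^{1/4}$, the elliptic step, and the $\ep$-bookkeeping) goes through as you describe.
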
 

Thus, just like in the previous section, we are able to define a family of global attractors in $\mathcal{H}_\ep$ (cf. e.g. (\ref{gfam-1})) that attain optimal regularity; that is, trajectories on the attractor are strong solutions. 

\begin{theorem}  \label{t:gattr-a}
Let the assumptions of Theorem \ref{optimal-a} hold. 
For each $\ep\in(0,1]$, the dynamical system $(S_\ep(t),\mathcal{H}_\ep)$ admits a global attractor $\mathcal{A}_\ep$ in $\mathcal{H}_\ep$. 
The global attractor is invariant under the semiflow $S_\ep$ (both positively and negatively) and attracts all nonempty bounded subsets of $\mathcal{H}_\ep$; precisely, 
\begin{enumerate}
\item For each $t\geq 0$, $S_\ep(t)\mathcal{A}_\ep=\mathcal{A}_\ep$, and 
\item For every nonempty bounded subset $B$ of $\mathcal{H}_\ep$,
\[
\lim_{t\rightarrow\infty}{\rm{dist}}_{\mathcal{H}_\ep}(S_\ep(t)B,\mathcal{A}_\ep):=\lim_{t\rightarrow\infty}\sup_{\varphi\in B}\inf_{\theta\in\mathcal{A}_\ep}\|S_\ep(t)\varphi-\theta\|_{\mathcal{H}_\ep}=0.
\]
\end{enumerate}
The global attractor is unique and given by 
\[
\mathcal{A}_\ep=\omega(\mathcal{B}_\ep):=\bigcap_{s\geq t}{\overline{\bigcup_{t\geq 0} S_\ep(t)\mathcal{B}_\ep}}^{\mathcal{H}_\ep}.
\]
Furthermore, $\mathcal{A}_\ep$ is the maximal compact invariant subset in $\mathcal{H}_\ep$.
\end{theorem}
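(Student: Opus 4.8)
The plan is to obtain $\mathcal{A}_\ep$ as a routine application of the standard theory of global attractors for continuous semiflows (see, e.g., \cite{Hale88,Temam88,Babin&Vishik92}), since the substantive hypotheses have already been verified in the preceding results. The abstract criterion I would invoke is: a semiflow $S_\ep$ on the complete metric space $\mathcal{H}_\ep$ admits a unique global attractor, equal to the $\omega$-limit set of any bounded absorbing set, provided (a) each $S_\ep(t)$ is continuous on $\mathcal{H}_\ep$, (b) $S_\ep$ possesses a bounded absorbing set, and (c) $S_\ep$ is asymptotically compact (equivalently, admits a compact attracting set). Items (a) and (b) are immediate: Corollary \ref{sf-a} furnishes the semigroup and continuity properties together with the Lipschitz estimate (\ref{sf-a-lc}), so each $S_\ep(t):\mathcal{H}_\ep\rightarrow\mathcal{H}_\ep$ is continuous, while Lemma \ref{t:a-abs-set} provides the closed, bounded, positively invariant absorbing set $\mathcal{B}_\ep$ of (\ref{abss-1}).

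The only item requiring a genuine argument is (c), and this is where I would concentrate. Theorem \ref{optimal-a} supplies a set $\mathcal{U}_\ep$, bounded in the more regular space $\mathcal{D}_\ep$, that attracts every bounded subset of $\mathcal{H}_\ep$ exponentially in the $\mathcal{H}_\ep$-metric, as quantified by (\ref{tran-2}). The decisive observation is that $\mathcal{D}_\ep$ embeds compactly into $\mathcal{H}_\ep$: the interior slots give the compact inclusions $H^2(\Omega)\hookrightarrow H^1(\Omega)$ and $H^1(\Omega)\hookrightarrow L^2(\Omega)$ by Rellich--Kondrachov, and the boundary slots give the compact inclusion $H^{1/2}(\Gamma)\hookrightarrow L^2(\Gamma)$ on the compact manifold $\Gamma$. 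Consequently the $\mathcal{H}_\ep$-closure $\overline{\mathcal{U}_\ep}^{\mathcal{H}_\ep}$ is compact in $\mathcal{H}_\ep$, and by (\ref{tran-2}) it is a compact attracting set for $S_\ep$; asymptotic compactness follows at once.

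With (a)--(c) established, the abstract theorem yields the global attractor as $\mathcal{A}_\ep=\omega(\mathcal{B}_\ep)$ --- nonempty, compact, and fully invariant --- matching the characterization in the statement; the invariance $S_\ep(t)\mathcal{A}_\ep=\mathcal{A}_\ep$ of item (1) and the attraction property of item (2) are then read off directly, the semigroup property and continuity of Corollary \ref{sf-a} being precisely what upgrade positive invariance to full invariance. Uniqueness and the maximality of $\mathcal{A}_\ep$ among compact invariant subsets are standard: any compact invariant set is bounded, hence attracted by $\mathcal{A}_\ep$ by item (2), and being invariant it must lie inside $\mathcal{A}_\ep$. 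I do not anticipate a serious obstacle, since the dissipative and smoothing estimates carrying the real weight are already packaged in Lemma \ref{t:a-abs-set} and Theorem \ref{optimal-a}; the single conceptual step is the compact embedding $\mathcal{D}_\ep\hookrightarrow\mathcal{H}_\ep$, which converts the regular attracting set into a compact one and is insensitive to the value of $\ep\in(0,1]$.
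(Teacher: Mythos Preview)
Your proposal is correct and matches the paper's approach: the paper does not give an explicit proof of Theorem \ref{t:gattr-a}, but simply states (just before and after Theorem \ref{optimal-a}) that the existence of the global attractor follows from the standard theory in \cite{Babin&Vishik92,Temam88} once the regular attracting set $\mathcal{U}_\ep\subset\mathcal{D}_\ep$ is in hand. Your write-up supplies exactly the details behind that citation---continuity from Corollary \ref{sf-a}, dissipativity from Lemma \ref{t:a-abs-set}, and asymptotic compactness via the compact embedding $\mathcal{D}_\ep\hookrightarrow\mathcal{H}_\ep$ applied to Theorem \ref{optimal-a}---so it is a faithful expansion of the paper's intended argument.
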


Moreover, we immediately have the following

\begin{corollary}
Let the assumptions of Theorem \ref{t:gattr-a} hold. 
For each $\ep\in(0,1]$, the global attractor $\mathcal{A}_\ep$ admitted by the semiflow $S_\ep$ satisfies 
\begin{equation*}
\mathcal{A}_\ep\subset\mathcal{U}_\ep.
\end{equation*}
Consequently, for each $\ep\in(0,1]$, the global attractor $\mathcal{A}_\ep$ is bounded in $\mathcal{D}_\ep$ and consists only of strong solutions. 
\end{corollary}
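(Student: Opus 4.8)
The plan is to exploit the strict invariance of the global attractor together with the exponential attraction furnished by the regular set $\mathcal{U}_\ep$ from Theorem \ref{optimal-a}. The underlying principle is that the attractor, being the minimal closed set attracting all bounded sets, must sit inside any closed attracting set; I would make this concrete through the semidistance estimate (\ref{tran-2}), applied to the attractor itself.

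First I would observe that $\mathcal{A}_\ep$ is compact in $\mathcal{H}_\ep$ by Theorem \ref{t:gattr-a}, hence a nonempty bounded subset of $\mathcal{H}_\ep$ with $\|\mathcal{A}_\ep\|_{\mathcal{H}_\ep}<\infty$. Consequently Theorem \ref{optimal-a} applies with the choice $B=\mathcal{A}_\ep$, giving, for every $t\geq 0$,
\[
dist_{\mathcal{H}_\ep}(S_\ep(t)\mathcal{A}_\ep,\mathcal{U}_\ep) \le Q_\ep(\|\mathcal{A}_\ep\|_{\mathcal{H}_\ep})\, e^{-\omega_{2\ep} t}.
\]
Invoking the invariance $S_\ep(t)\mathcal{A}_\ep=\mathcal{A}_\ep$ of Theorem \ref{t:gattr-a}(1), the left-hand side equals the $t$-independent quantity $dist_{\mathcal{H}_\ep}(\mathcal{A}_\ep,\mathcal{U}_\ep)$, while the right-hand side tends to $0$ as $t\to\infty$ since $\omega_{2\ep}>0$. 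Therefore $dist_{\mathcal{H}_\ep}(\mathcal{A}_\ep,\mathcal{U}_\ep)=0$, and because $\mathcal{U}_\ep$ is closed in $\mathcal{H}_\ep$ a vanishing Hausdorff semidistance forces every point of $\mathcal{A}_\ep$ into $\mathcal{U}_\ep$; that is, $\mathcal{A}_\ep\subset\mathcal{U}_\ep$.

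The two consequences then follow immediately. Since $\mathcal{U}_\ep\subset\mathcal{D}_\ep$ is bounded in $\mathcal{D}_\ep$, the inclusion just proved shows $\mathcal{A}_\ep$ is bounded in $\mathcal{D}_\ep$. For the strong-solution statement I would use invariance once more: through each $\zeta\in\mathcal{A}_\ep$ there passes a complete trajectory contained in $\mathcal{A}_\ep\subset\mathcal{D}_\ep$, so in particular its datum lies in $\mathcal{D}_\ep$ and satisfies the compatibility condition encoded in $\mathcal{D}_\ep$; hence, under assumption (\ref{reg-assf-3}), the strong-solution part of Theorem \ref{t:ws-a} identifies this trajectory as the unique global strong solution in the sense of Definition \ref{d:acoustic-strong}.

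I do not expect a genuine obstacle here; the argument is the standard ``invariance plus attraction to a regular set'' lifting of regularity. The only point requiring mild care is the topology in which $\mathcal{U}_\ep$ is taken to be closed: it must be the topology of $\mathcal{H}_\ep$, so that the vanishing of the $\mathcal{H}_\ep$-semidistance yields membership in $\mathcal{U}_\ep$ (rather than merely in its $\mathcal{D}_\ep$-closure) and thereby the honest inclusion $\mathcal{A}_\ep\subset\mathcal{U}_\ep$.
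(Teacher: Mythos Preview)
Your argument is correct and is precisely the standard ``invariance plus exponential attraction to a closed regular set'' reasoning that the paper has in mind; indeed, the paper introduces this corollary with the words ``Moreover, we immediately have the following'' and gives no separate proof. Your explicit care about the topology in which $\mathcal{U}_\ep$ is closed is appropriate, and the paper's hypothesis in Theorem~\ref{optimal-a} that $\mathcal{U}_\ep$ is closed (together with its boundedness in $\mathcal{D}_\ep$) is exactly what makes your final inclusion step go through.
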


The proof of Theorem \ref{optimal-a} proceeds along the usual lines; whereby decomposing the semiflow $S_\ep$ into two parts, one which decays (exponentially) to zero, and one part which is precompact. 
Again, the results follow the presentation in \cite{Frigeri10} with minor modifications to include the perturbation parameter $\ep\in(0,1].$
Define 
\begin{equation}  \label{beta}
\psi(s):=f(s)+\beta s
\end{equation}
for some constant $\beta \ge \vartheta$ to be determined later (observe, $\psi'(s)\ge 0$ thanks to assumption (\ref{reg-assf-3})). 
Set $\Psi (s):=\int_{0}^{s}\psi (\sigma )d\sigma$. 
Let $\zeta_0=(u_0,u_1,\delta_0,\delta_1)\in\mathcal{H}_\ep$. 
Let $\zeta(t)=(u(t),u_t(t),\delta(t),\delta_t(t))$ denote the corresponding global solution of Problem (A) on $[0,\infty)$ with the initial data $\zeta_0$. 
We then decompose Problem (A) into the following systems of equations.
For all $t\ge0$, set 
\begin{align}
\zeta(t) & = (u(t),u_t(t),\delta(t),\delta_t(t))  \notag \\ 
& = \left( v(t),v_t(t),\gamma(t),\gamma_t(t) \right) + \left( w(t),w_t(t),\theta(t),\theta_t(t) \right)  \notag \\
& =: \xi(t) + \chi(t)  \notag 
\end{align}
Then $\xi$ and $\chi$ satisfy the IBVPs,
\begin{equation}
\left\{ 
\begin{array}{ll}  \label{pde-v}
v_{tt} + v_{t} - \Delta v + v + \psi (u) - \psi (w) = 0 & \text{in}\quad(0,\infty
)\times \Omega, \\ 
\ep \gamma_{tt} + \gamma_t + \gamma = -v_t & \text{on}\quad(0,\infty)\times \Gamma, \\ 
\gamma_t = \partial_{\bf{n}}v & \text{on}\quad(0,\infty )\times \Gamma, \\ 
\xi(0) = \zeta_0 & \text{at}\quad\{0\}\times{\overline{\Omega}}, \\ 
\end{array}
\right.\end{equation}
and, respectively, 
\begin{equation}
\left\{ 
\begin{array}{ll}  \label{pde-w}
w_{tt} + w_{t} - \Delta w + w + \psi (w) = \beta u & \text{in}\quad(0,\infty)\times \Omega, \\ 
\ep \theta_{tt} + \theta_t + \theta = -w_t & \text{on}\quad(0,\infty)\times \Gamma, \\ 
\theta_{t} = \partial_{\bf{n}}w & \text{on}\quad(0,\infty)\times \Gamma, \\ 
\chi(0) = {\bf{0}} & \text{at}\quad\{0\}\times{\overline{\Omega}}.
\end{array} \right.
\end{equation}
In view of Lemmas \ref{t:uniform-bound-w} and \ref{t:uniform-decay} below, we define the one-parameter family of maps, $K_\ep(t):\mathcal{H}_\ep\rightarrow \mathcal{H}_\ep$, by 
\begin{equation*}
K_\ep(t)\zeta_{0} := \chi(t),
\end{equation*}
where $\chi(t)$ is a solution of (\ref{pde-w}). 
With such $\chi(t)$, we may define a second function $\xi(t)$ for all $t\ge 0$ as the solution of (\ref{pde-v}). 
Through the dependence of $\xi$ on $\chi$ and $\zeta_{0}$, the solution of (\ref{pde-v}) defines a one-parameter family of maps, $Z_\ep(t):\mathcal{H}_{\varepsilon}\rightarrow \mathcal{H}_\ep$, defined by 
\begin{equation*}
Z_\ep(t)\zeta_{0} := \xi(t).
\end{equation*}
Notice that if $\xi$ and $\chi$ are solutions to (\ref{pde-v}) and (\ref{pde-w}), respectively, then the function $\zeta := \xi+\chi$ is a solution to the
original Problem (A), for each $\ep\in(0,1]$.

The first lemma shows that the operators $K_\ep$ are bounded in $\mathcal{H}_\ep$, uniformly with respect to $\varepsilon$. 
The result largely follows from the existence of a bounded absorbing set $\mathcal{B}_\ep$ in $\mathcal{H}_\ep$ for $S_\ep$ (recall (\ref{acoustic-bound})).

\begin{lemma}  \label{t:uniform-bound-w} 
Let the assumptions of Theorem \ref{t:gattr-a} hold. 
For each $\varepsilon \in (0,1]$ and $\zeta _{0}=(u_{0},u_{1},\delta_0,\delta_1) \in \mathcal{H}_\ep$, there exists a unique global weak solution $\chi = (w, w_{t}, \theta, \theta_t) \in C([0,\infty );\mathcal{H}_\ep)$ to problem (\ref{pde-w}) satisfying 
\begin{equation}  \label{bnd-2}
\theta_{t}\in L_{loc}^{2}([0,\infty )\times \Gamma ).
\end{equation}
Moreover, for all $\zeta_{0}\in \mathcal{H}_\ep$ with $\left\Vert \zeta_{0}\right\Vert _{\mathcal{H}_\ep}\leq R$ for
all $\varepsilon \in (0,1]$, there holds for all $t\geq 0$, 
\begin{equation}  \label{uniform-bound-w}
\Vert K_\ep(t)\zeta _{0}\Vert _{\mathcal{H}_\ep} \le Q(R).
\end{equation}
\end{lemma}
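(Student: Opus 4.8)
The plan is to treat system (\ref{pde-w}) as an abstract Cauchy problem of exactly the same type as Problem (A), but with the \emph{monotone} nonlinearity $\psi$ (recall $\psi'\ge 0$ by (\ref{beta}) and (\ref{reg-assf-3})) in place of $f$ and with a known, uniformly bounded forcing $\beta u$. First I would establish local well-posedness: writing $\chi=(w,w_t,\theta,\theta_t)$, the system reads $\frac{d}{dt}\chi=A_\ep\chi+\widetilde{\mathcal{G}}_\ep(\chi)+\mathcal{R}(t)$, where $A_\ep$ is the same generator of a contraction semigroup, $\widetilde{\mathcal{G}}_\ep$ collects the locally Lipschitz terms built from $-\psi(w)$, and $\mathcal{R}(t):=(0,\beta u(t),0,0)$. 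Since $u$ is the global solution furnished by Theorem~\ref{t:ws-a} with $\|u(t)\|_1\le Q(R)$ for all $t\ge0$ by (\ref{acoustic-bound}), we have $\mathcal{R}\in C([0,\infty);\mathcal{H}_\ep)$, and the Lumer--Phillips theorem together with the variation-of-constants fixed point yields a unique maximal mild solution $\chi\in C([0,T^*);\mathcal{H}_\ep)$; uniqueness follows by the usual Gronwall argument on the difference of two solutions, and the boundary regularity (\ref{bnd-2}) follows from the energy identity below exactly as in Theorem~\ref{t:ws-a}. Globality ($T^*=+\infty$) will be a consequence of the uniform a priori bound.

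The heart of the matter is the \emph{uniform} bound (\ref{uniform-bound-w}). Multiplying (\ref{pde-w})$_1$ by $2w_t$ in $L^2(\Omega)$, (\ref{pde-w})$_2$ by $2\theta_t$ in $L^2(\Gamma)$, and summing, the boundary cross terms cancel via $\partial_{\mathbf{n}}w=\theta_t$, producing
\begin{equation*}
\frac{d}{dt}\Big\{\|\chi\|_{\mathcal{H}_\ep}^2+2\int_\Omega\Psi(w)\,dx\Big\}+2\|w_t\|^2+2\|\theta_t\|_{L^2(\Gamma)}^2=2\beta\langle u,w_t\rangle.
\end{equation*}
On its own this identity only gives linear-in-$t$ growth, since the dissipation sits in the velocities alone; so I would instead reproduce the perturbed-functional computation of Lemma~\ref{t:a-abs-set}, testing (\ref{pde-w})$_1$ with $w_t+\eta w$ and (\ref{pde-w})$_2$ with $\theta_t+\eta\theta$ and adding the identity $\frac{d}{dt}\eta\langle w,\theta\rangle_{L^2(\Gamma)}=\eta\langle w_t,\theta\rangle_{L^2(\Gamma)}+\eta\langle w,\theta_t\rangle_{L^2(\Gamma)}$. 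Setting $\Lambda_\ep(\chi):=\|\chi\|_{\mathcal{H}_\ep}^2+2\eta\langle w,\theta\rangle_{L^2(\Gamma)}+2\int_\Omega\Psi(w)\,dx$, the same Young-inequality estimates as in (\ref{qaz-3})--(\ref{qaz-5})---where now convexity of $\Psi$ supplies $\Psi(w)\le\langle\psi(w),w\rangle$ together with the lower bound $\int_\Omega\Psi(w)\,dx\ge-\kappa$---plus the forcing estimate $2\beta\langle u,w_t+\eta w\rangle\le\|w_t\|^2+\tfrac{\eta}{2}\|w\|_1^2+C\|u\|_1^2$ produce, for a small $\eta=\eta_1>0$ chosen independently of $\ep$,
\begin{equation*}
\frac{d}{dt}\Lambda_\ep(\chi)+m\|\chi\|_{\mathcal{H}_\ep}^2\le C\big(1+\|u\|_1^2\big)\le C_R,
\end{equation*}
where $m,C_R>0$ are independent of $\ep$ and $t$ precisely because $\|u(t)\|_1\le Q(R)$.

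Finally I would convert this differential inequality into the bound claimed. As in (\ref{pre-ab}), Young's inequality applied to $2\eta\langle w,\theta\rangle_{L^2(\Gamma)}$ together with the bounds on $\Psi$ gives $C_1\|\chi\|_{\mathcal{H}_\ep}^2-\kappa\le\Lambda_\ep(\chi)$ with $C_1>0$ independent of $\ep$, along with an $\ep$-uniform upper bound for $\Lambda_\ep$ on bounded sets. The sublevel-set argument behind Proposition~\ref{t:diff-ineq-1} then applies: whenever $\|\chi\|_{\mathcal{H}_\ep}^2>C_R/m$ the functional $\Lambda_\ep$ is strictly decreasing, so $\Lambda_\ep(\chi(t))$ never exceeds $\max\{\Lambda_\ep(\chi(0)),\,B_R\}$, where $B_R:=\sup\{\Lambda_\ep(\chi):\|\chi\|_{\mathcal{H}_\ep}^2\le C_R/m\}$ is finite and independent of $\ep$. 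Since $\chi(0)=\mathbf{0}$ forces $\Lambda_\ep(\chi(0))=0$, the bound holds for \emph{all} $t\ge0$, and the lower bound converts it into $\|K_\ep(t)\zeta_0\|_{\mathcal{H}_\ep}^2=\|\chi(t)\|_{\mathcal{H}_\ep}^2\le Q(R)$, uniformly in $\ep\in(0,1]$; in particular $T^*=+\infty$. The main obstacle is keeping every constant in the perturbed-functional estimate independent of $\ep$: exactly as in (\ref{qaz-4})--(\ref{fook-1}), the products involving the boundary velocity $\theta_t$ must be split using the factor $\tfrac1\ep\cdot\ep$ so that the coefficient multiplying $\ep^2\|\theta_t\|_{L^2(\Gamma)}^2$ remains positive and bounded below uniformly, which is what forces the $\ep$-independent choice of $\eta_1$.
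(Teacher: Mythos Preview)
Your proposal is correct and follows essentially the same route the paper indicates. The paper does not actually write out a proof of this lemma; it only remarks that ``the result largely follows from the existence of a bounded absorbing set $\mathcal{B}_\ep$ in $\mathcal{H}_\ep$ for $S_\ep$ (recall (\ref{acoustic-bound})),'' i.e., one reruns the perturbed-functional argument of Lemma~\ref{t:a-abs-set} for the $\chi$-system with the monotone nonlinearity $\psi$ and the uniformly bounded forcing $\beta u$, which is precisely what you carry out in detail (including the correct $\ep$-independent choice of $\eta$ and the use of $\chi(0)=\mathbf{0}$ to get the bound for all $t\ge 0$).
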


\begin{lemma}  \label{t:Gronwall-bound} 
Let the assumptions of Theorem \ref{t:gattr-a} hold. 
For each $\ep\in(0,1]$ and for all $\eta >0$, there is a function $Q_\eta(\cdot) \sim \eta^{-1}$, such that for every $0\leq s\leq t$, $\zeta_{0} = (u_{0},u_{1},\delta_0,\delta_1) \in \mathcal{B}_{\varepsilon }$, and $\ep\in(0,1]$,
\begin{equation}  \label{Gronwall-bound-0}
\int_{s}^{t} \left( \Vert u_{t}(\tau )\Vert ^{2} + \Vert w_{t}(\tau )\Vert
^{2} + \|\theta(\tau)\|^2_{L^2(\Gamma)} \right) d\tau \leq \frac{\eta }{2}(t - s) + Q_{\eta}(R),
\end{equation}
where $R>0$ is such that $\left\Vert \zeta _{0}\right\Vert _{\mathcal{H}_{\varepsilon }}\leq R,$ for all $\varepsilon\in(0,1]$.
\end{lemma}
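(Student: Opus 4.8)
The plan is to obtain the stated bound by combining the global energy identity for Problem (A) with a separate energy estimate for the component system (\ref{pde-w}), and to manufacture the small adjustable slope $\eta/2$ by a time integration by parts that trades the nondecaying forcing term against the interior dissipation $\int_s^t\|u_t\|^2\,d\tau$, which the global energy identity controls by a \emph{constant}.

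First I would record the uniform bounds available on the trajectory. Since $\zeta_0\in\mathcal{B}_\ep$ and $\mathcal{B}_\ep$ is positively invariant with radius $R_1$ independent of $\ep$ (Lemma \ref{t:a-abs-set}), we have $\|\zeta(\tau)\|_{\mathcal{H}_\ep}\le Q(R)$ for all $\tau\ge0$; likewise Lemma \ref{t:uniform-bound-w} gives $\|K_\ep(\tau)\zeta_0\|_{\mathcal{H}_\ep}=\|\chi(\tau)\|_{\mathcal{H}_\ep}\le Q(R)$, so that $\|u(\tau)\|_1$, $\|u_t(\tau)\|$, $\|w(\tau)\|_1$, $\|w_t(\tau)\|$, and $\|\theta(\tau)\|_{L^2(\Gamma)}$ are all bounded by $Q(R)$, uniformly in $\tau$ and $\ep$. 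Integrating the energy identity (\ref{acoustic-energy-3}) over $(s,t)$ and applying the lower bound (\ref{consf-1}) and the upper bound (\ref{consf-2}) to $2\int_\Omega F(u)\,dx$ at the (uniformly bounded) states $\zeta(s),\zeta(t)$ yields at once
\[
\int_s^t \|u_t(\tau)\|^2\,d\tau \le Q(R),
\]
a constant uniform in both $\ep$ and $t-s$. This is the crucial ingredient, since $\|u_t\|^2$ contributes no time-linear term.

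Next I would derive the energy identity for the compact component by testing (\ref{pde-w})$_1$ with $2w_t$ in $L^2(\Omega)$ and (\ref{pde-w})$_2$ with $2\theta_t$ in $L^2(\Gamma)$ and summing, obtaining
\[
\frac{d}{dt}\mathcal{E}_\chi + 2\|w_t\|^2 + 2\|\theta_t\|^2_{L^2(\Gamma)} = 2\beta\langle u,w_t\rangle,
\]
where $\mathcal{E}_\chi := \|w_t\|^2 + \|w\|_1^2 + \|\theta\|^2_{L^2(\Gamma)} + \ep\|\theta_t\|^2_{L^2(\Gamma)} + 2\int_\Omega\Psi(w)\,dx$ is, by $\psi'\ge0$ and the uniform bounds, trapped between two constants depending only on $R$. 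The only obstruction to a constant bound on $\int_s^t\|w_t\|^2$ is the forcing $2\beta\langle u,w_t\rangle$; because $\chi$ does not decay, this term would otherwise make the dissipation integral grow linearly with a fixed slope proportional to $R$. This is the main obstacle, and it is resolved by integrating the forcing in time by parts,
\[
2\beta\int_s^t\langle u,w_t\rangle\,d\tau = 2\beta\big[\langle u,w\rangle\big]_s^t - 2\beta\int_s^t\langle u_t,w\rangle\,d\tau,
\]
bounding the endpoint term by $Q(R)$ via the uniform bounds, and applying Young's inequality with a free parameter $\sigma>0$:
\[
2\beta\int_s^t|\langle u_t,w\rangle|\,d\tau \le \frac{\beta}{\sigma}\int_s^t\|u_t\|^2\,d\tau + \beta\sigma\int_s^t\|w\|^2\,d\tau \le \frac{\beta}{\sigma}Q(R) + \beta\sigma Q(R)(t-s).
\]
Choosing $\sigma$ proportional to $\eta/Q(R)$ makes the time-linear term have slope exactly $\eta/2$ while the constant grows like $\sigma^{-1}\sim\eta^{-1}$, giving the claimed $Q_\eta(\cdot)\sim\eta^{-1}$. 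Integrating the $\chi$-energy identity over $(s,t)$ and inserting these estimates bounds $\int_s^t\big(\|w_t\|^2+\|\theta_t\|^2_{L^2(\Gamma)}\big)\,d\tau$ by $\tfrac{\eta}{2}(t-s)+Q_\eta(R)$.

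Finally I would recover the position term $\int_s^t\|\theta\|^2_{L^2(\Gamma)}\,d\tau$. Testing (\ref{pde-w})$_2$ with $2\theta$ in $L^2(\Gamma)$ and integrating over $(s,t)$ expresses $2\int_s^t\|\theta\|^2_{L^2(\Gamma)}$ through the endpoint quantity $\big[2\ep\langle\theta_t,\theta\rangle + \|\theta\|^2_{L^2(\Gamma)}\big]_s^t$, which is bounded by $Q(R)$ using $\ep\|\theta_t\|_{L^2(\Gamma)}\le Q(R)$ from the $\mathcal{H}_\ep$-bound on $\chi$; the term $2\ep\int_s^t\|\theta_t\|^2_{L^2(\Gamma)}\le 2\int_s^t\|\theta_t\|^2_{L^2(\Gamma)}$ just controlled (here $\ep\le1$ is used); and the cross term $-2\int_s^t\langle w_t,\theta\rangle_{L^2(\Gamma)}$, which a further Young inequality splits into an absorbable $\int_s^t\|\theta\|^2_{L^2(\Gamma)}$ and the controlled $\int_s^t\|w_t\|^2$. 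Adding the three contributions gives the assertion; since every constant traces back to $R_1$, the global energy bound, and $\ep\le1$, the resulting bound is independent of $\ep$.
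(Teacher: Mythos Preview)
Your treatment of the first two pieces is sound and matches the standard route taken in the Frigeri/Pata--Zelik framework that the paper defers to: integrating the global energy identity (\ref{acoustic-energy-3}) over $(s,t)$ indeed gives $\int_s^t\|u_t\|^2\,d\tau\le Q(R)$ uniformly in $t-s$ and $\ep$, and the $\chi$--energy identity together with the time integration by parts on $2\beta\langle u,w_t\rangle$ correctly yields $\int_s^t(\|w_t\|^2+\|\theta_t\|^2_{L^2(\Gamma)})\,d\tau\le \tfrac{\eta}{2}(t-s)+Q_\eta(R)$ with $Q_\eta\sim\eta^{-1}$.

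The gap is in the last step, where you recover $\int_s^t\|\theta\|^2_{L^2(\Gamma)}\,d\tau$. Testing (\ref{pde-w})$_2$ with $2\theta$ produces the boundary cross term $-2\int_s^t\langle w_t,\theta\rangle_{L^2(\Gamma)}\,d\tau$, and Young's inequality on this pairing yields $\int_s^t\|w_t\|^2_{L^2(\Gamma)}$, \emph{not} $\int_s^t\|w_t\|^2_{L^2(\Omega)}$. You have controlled only the latter; no trace inequality passes from one to the other at the regularity level $w_t\in L^2(\Omega)$. The natural repair---integrate $\langle w_t,\theta\rangle_{L^2(\Gamma)}$ by parts in time to replace $w_t|_\Gamma$ by $w|_\Gamma$ (which \emph{is} controlled via $H^1(\Omega)\hookrightarrow L^2(\Gamma)$) and $\theta_t$ (already handled)---does close the argument, but chasing the constants shows the resulting $Q_\eta$ then scales like $\eta^{-3}$, not the $\eta^{-1}$ you claim. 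To keep the sharp $\eta^{-1}$ dependence one should instead use the combined multipliers $w_t+\alpha w$ and $\theta_t+\alpha\theta$ simultaneously, exactly as in the absorbing-set proof (Lemma~\ref{t:a-abs-set}); there the boundary cross terms collapse, after adding the derivative identity $\tfrac{d}{dt}\langle w,\theta\rangle_{L^2(\Gamma)}$, to a term of the form $-2\alpha\langle w,\theta_t\rangle_{L^2(\Gamma)}$, which is absorbed directly into the available $\|\theta_t\|^2_{L^2(\Gamma)}$ dissipation rather than estimated after the fact.
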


The next result shows that the operators $Z_\ep$ are uniformly decaying to zero in $\mathcal{H}_\ep$, for each $\ep\in(0,1]$.

\begin{lemma}  \label{t:uniform-decay}  
Let the assumptions of Theorem \ref{t:gattr-a} hold. 
For each $\varepsilon \in (0,1]$ and $\zeta_{0}=(u_{0},u_{1},\delta_0, \delta_1) \in \mathcal{H}_{\varepsilon }$, there exists a unique global weak solution $\xi=(v,v_t,\gamma,\gamma_t)\in C([0,\infty );\mathcal{H}_{\varepsilon })$ to problem (\ref{pde-v}) satisfying 
\begin{equation}  \label{bounded-boundary-3}
\gamma_{t}\in L_{loc}^{2}([0,\infty )\times \Gamma ).
\end{equation}
Moreover, for all $\zeta_0\in\mathcal{D}_\varepsilon$ with $\left\Vert\zeta_{0}\right\Vert _{\mathcal{H}_{\varepsilon }}\leq R$ for all $\varepsilon \in (0,1]$, there is a constant $\omega_{3\ep} > 0$, depending on $\ep$ as $\omega_{3\ep}\sim\ep$, and there is a positive monotonically increasing function $Q_\ep(\cdot)\sim\ep^{-1}$, such that, for all $t\geq 0$, 
\begin{equation}  \label{uniform-decay}
\Vert Z_{\varepsilon }(t)\zeta_{0}\Vert _{\mathcal{H}_{\varepsilon }} \le
Q_\ep(R)e^{-\omega_{3\ep} t}.
\end{equation}
\end{lemma}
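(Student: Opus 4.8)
The plan is to obtain the solution of (\ref{pde-v}) from the linearity of the decomposition rather than from scratch: Theorem \ref{t:ws-a} furnishes the global solution $\zeta=(u,u_t,\delta,\delta_t)$ of Problem (A) and Lemma \ref{t:uniform-bound-w} the global solution $\chi=(w,w_t,\theta,\theta_t)$ of (\ref{pde-w}), and—by the very way $\psi$ was inserted in (\ref{beta})—the difference $\xi:=\zeta-\chi$ is a weak solution of (\ref{pde-v}). Uniqueness and the boundary regularity $\gamma_t\in L^2_{loc}([0,\infty)\times\Gamma)$ are then inherited from the corresponding properties of $\zeta$ and $\chi$, so that $Z_\ep(t)\zeta_0=\xi(t)=S_\ep(t)\zeta_0-K_\ep(t)\zeta_0$ is well defined and only the decay estimate (\ref{uniform-decay}) remains.

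For the decay I would run a weighted energy (Lyapunov) argument on (\ref{pde-v}), tracking every power of $\ep$. The basic energy comes from testing the interior equation with $v_t$ and the boundary equation with $\gamma_t$; using $\gamma_t=\partial_{\mathbf n}v$, the two couplings $\langle\gamma_t,v_t\rangle_{L^2(\Gamma)}$ cancel, leaving the dissipation $\|v_t\|^2+\|\gamma_t\|^2_{L^2(\Gamma)}$ but no control of $\|v\|_1$ or $\|\gamma\|_{L^2(\Gamma)}$. To recover these I add the cross terms produced by testing the interior equation with $\alpha v$ and the boundary equation with $\alpha\gamma$, for a small $\alpha>0$ to be fixed. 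The structural key is the monotonicity $\psi'\ge0$ from (\ref{reg-assf-3}), which makes $\langle\psi(u)-\psi(w),v\rangle=\langle\psi(u)-\psi(w),u-w\rangle\ge0$, so the nonlinearity generated by the $\alpha v$-multiplier carries a favourable sign.

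Two terms need special treatment. The signless contribution $\langle\psi(u)-\psi(w),v_t\rangle$ from the basic energy I would absorb by folding the convexity (Bregman) potential $J(t):=\int_\Omega[\Psi(u)-\Psi(w)-\psi(w)(u-w)]\,dx\ge0$ into the functional, since $\frac{d}{dt}J=\langle\psi(u)-\psi(w),v_t\rangle+\langle\psi(u)-\psi(w),w_t\rangle-\langle\psi'(w)w_t,v\rangle$, trading the bad term for remainders that are linear in $w_t$; using the critical growth (\ref{reg-assf-2}), the uniform bounds (\ref{acoustic-bound}), (\ref{uniform-bound-w}), and $H^1(\Omega)\hookrightarrow L^6(\Omega)$ these are bounded by $\frac{\alpha}{8}\|v\|_1^2+C(R)\|w_t\|^2$, the first part swallowed by the $\alpha\|v\|_1^2$ dissipation. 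The $\alpha\gamma$-multiplier, on the other hand, produces $-\alpha\langle v_t,\gamma\rangle_{L^2(\Gamma)}$, and—this is the genuinely singular feature—Problem (A) has no boundary dissipation of $v_t$ (contrast the $\|u_t\|^2_{L^2(\Gamma)}$ available for Problem (T) in (\ref{energy-1})), so $\|v_t\|_{L^2(\Gamma)}$ is not controlled by the $\mathcal{H}_\ep$-energy; I would remove it by integrating by parts in time, $-\alpha\langle v_t,\gamma\rangle_{L^2(\Gamma)}=-\alpha\frac{d}{dt}\langle v,\gamma\rangle_{L^2(\Gamma)}+\alpha\langle v,\gamma_t\rangle_{L^2(\Gamma)}$, folding the exact derivative into the functional and estimating the benign remainder by the trace inequality against the $\|v\|_1^2$ and $\|\gamma_t\|^2_{L^2(\Gamma)}$ dissipation. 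Assembling everything, the functional
\[
\Lambda(t):=\frac12\big(\|v_t\|^2+\|v\|_1^2+\|\gamma\|^2_{L^2(\Gamma)}+\ep\|\gamma_t\|^2_{L^2(\Gamma)}\big)+J(t)+\alpha\big(\langle v_t,v\rangle+\ep\langle\gamma_t,\gamma\rangle_{L^2(\Gamma)}-\langle v,\gamma\rangle_{L^2(\Gamma)}\big)
\]
is, for $\alpha$ small, equivalent to $\|\xi\|^2_{\mathcal{H}_\ep}$ and satisfies $\frac{d}{dt}\Lambda+2\omega_{3\ep}\Lambda\le g(t)$ with $g(t)\le C(R)\big(\|u_t\|^2+\|w_t\|^2+\|\theta\|^2_{L^2(\Gamma)}\big)$. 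Because $\ep\|\gamma_t\|^2_{L^2(\Gamma)}$ enters the norm while only $\|\gamma_t\|^2_{L^2(\Gamma)}$ is dissipated, and because the admissible size of $\alpha$ is limited by the $\ep$-weighted cross terms, the bookkeeping forces $\omega_{3\ep}\sim\ep$; the passage from this differential inequality to genuine exponential decay is then carried out by invoking Lemma \ref{t:Gronwall-bound}, whose arbitrarily small time-mean of $g$ (at the cost $Q_\eta\sim\eta^{-1}$) feeds the Gronwall lemma used throughout this circle of ideas and, balanced against the rate $\omega_{3\ep}\sim\ep$, yields the stated amplitude $Q_\ep(\cdot)\sim\ep^{-1}$.

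The main obstacle is the simultaneous management of the critical nonlinearity and the $\ep$-bookkeeping. The monotonicity together with the Bregman potential neutralises the critical term, and the time-integration-by-parts repairs the missing boundary dissipation of $v_t$; what is delicate is keeping every constant explicit in $\ep$—so as to certify that the coercivity threshold for $\alpha$, the degenerate decay rate $\omega_{3\ep}\sim\ep$, and the amplitude $Q_\ep\sim\ep^{-1}$ emerge exactly as claimed rather than with some spurious loss—and this explicit $\ep$-tracking is where the real work lies.
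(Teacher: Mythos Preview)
Your overall architecture is the right one and matches what the paper (via \cite{Frigeri10}) does: obtain $\xi$ as $\zeta-\chi$, build a Lyapunov functional containing the Bregman potential $J$, use $\psi'\ge 0$ for the $\alpha v$-multiplier, fold the boundary cross term $\langle v,\gamma\rangle_{L^2(\Gamma)}$ into the functional exactly as in (\ref{simple-id})--(\ref{acoustic-functional}), and close with Lemma \ref{t:Gronwall-bound} and Proposition \ref{GL}.

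There is, however, one genuine gap. After computing $\frac{d}{dt}J$ you estimate the two remainders $\langle\psi(u)-\psi(w),w_t\rangle$ and $-\langle\psi'(w)w_t,v\rangle$ \emph{separately}, each as $C(R)\|v\|_1\|w_t\|$, and then Young your way to $\frac{\alpha}{8}\|v\|_1^2+C(R)\|w_t\|^2$. This produces a differential inequality of the form $\frac{d}{dt}\Lambda+2\omega\Lambda\le g(t)$ with an \emph{additive} right-hand side $g\sim C(R)\|w_t\|^2$. But Proposition \ref{GL} needs the multiplicative form $h(t)\Lambda$; with a purely additive $g$ having only the sublinear-mean property of Lemma \ref{t:Gronwall-bound} you get $\Lambda(t)\le e^{-2\omega t}\Lambda(0)+\int_0^t e^{-2\omega(t-s)}g(s)\,ds$, and the convolution term is merely \emph{bounded} (of order $(\eta+Q_\eta)/\omega$), not decaying. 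So (\ref{uniform-decay}) does not follow.

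The fix is to \emph{group} the two remainders before estimating:
\[
\langle\psi(u)-\psi(w),w_t\rangle-\langle\psi'(w)w_t,v\rangle=\langle\psi(u)-\psi(w)-\psi'(w)v,\,w_t\rangle,
\]
and this is a \emph{second-order} Taylor remainder in $v$. Using (\ref{reg-assf-2}) (so $|\psi''|\le C(1+|s|)$) and $H^1(\Omega)\hookrightarrow L^6(\Omega)$ one gets $|\langle\psi(u)-\psi(w)-\psi'(w)v,w_t\rangle|\le C(R)\|w_t\|\,\|v\|_1^2\le C(R)\|w_t\|\,\Lambda$. Now the inequality reads $\frac{d}{dt}\Lambda+2\omega\Lambda\le h(t)\Lambda$ with $h(t)=C(R)\|w_t\|$, and Cauchy--Schwarz together with Lemma \ref{t:Gronwall-bound} gives $\int_s^t h\le \omega(t-s)+m$; Proposition \ref{GL} with $k=0$ then yields the genuine exponential decay $\Lambda(t)\le \Lambda(0)e^m e^{-\omega t}$, which is (\ref{uniform-decay}). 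The $\ep$-bookkeeping you describe then goes through.
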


The following lemma establishes the precompactness of the operators $K_{\varepsilon }$, for each $\ep\in(0,1].$

\begin{lemma}  \label{compact-a}
Let the assumptions of Theorem \ref{t:gattr-a} hold. 
For all $\ep\in(0,1]$, and for each $R>0$ and $\zeta\in\mathcal{D}_\ep$ such that $\|\zeta\|_{\mathcal{D}_\ep}\le R$ for all $\ep\in(0,1]$, there exist constants $\omega_{4\ep},R_{2\ep}>0$, both depending on $\ep$, with $\omega_{4\ep}\sim\ep$ and $R_{2\ep}\sim\ep^{-4}$, in which, for all $t\ge0$, there holds
\begin{equation}  \label{a-exp-attr-1}
\|K_\ep(t)\zeta_0\|^2_{\mathcal{D}_\ep} \le Q(R)e^{-\omega_{4\ep}t/2} + R_{2\ep}.
\end{equation}
\end{lemma}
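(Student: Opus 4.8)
The plan is to follow the time-differentiation scheme of \cite{Pata&Zelik06} and \cite{Frigeri10}: first obtain a dissipative bound on $\partial_t\chi$ in $\mathcal{H}_\ep$, and then recover the full $\mathcal{D}_\ep$-regularity of $\chi=(w,w_t,\theta,\theta_t)$ by reading the interior equation as an elliptic problem with the prescribed Neumann datum $\partial_{\bf{n}}w=\theta_t$. Throughout, the monotonicity $\psi'\ge 0$ furnished by (\ref{reg-assf-3}) and (\ref{beta}), together with the critical growth bound from (\ref{reg-assf-2}) (which, via $H^1(\Omega)\hookrightarrow L^6(\Omega)$ in dimension three, keeps $\psi(w)$ and $\psi'(w)w_t$ in $L^2(\Omega)$), is what tames the nonlinear terms. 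Differentiating (\ref{pde-w}) in $t$ and writing $(a,c):=(w_t,\theta_t)$ gives
\begin{equation*}
\left\{
\begin{array}{ll}
a_{tt}+a_t-\Delta a+a+\psi'(w)a=\beta u_t & \text{in}\quad(0,\infty)\times\Omega, \\
\ep c_{tt}+c_t+c=-a_t & \text{on}\quad(0,\infty)\times\Gamma, \\
c_t=\partial_{\bf{n}}a & \text{on}\quad(0,\infty)\times\Gamma,
\end{array}
\right.
\end{equation*}
which has precisely the structure of Problem (A) with the sign-definite potential $\psi'(w)$ and the forcing $\beta u_t$.

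First I would run the Lyapunov argument of Lemma \ref{t:a-abs-set} on this differentiated system, using the multipliers $a_t+\eta a$ in $\Omega$ and $c_t+\eta c$ on $\Gamma$. The initial datum is read off from $\chi(0)={\bf{0}}$: evaluating (\ref{pde-w}) at $t=0$ gives $w_{tt}(0)=\beta u_0-\psi(0)$ and $\ep\theta_{tt}(0)=0$, so $(a,a_t,c,c_t)(0)=(0,\beta u_0-\psi(0),0,0)$ is bounded in $\mathcal{H}_\ep$ by $Q(R)$. The one term lacking a sign is the forcing $\langle\beta u_t,a_t+\eta a\rangle$; here I would absorb the quadratic part into the dissipation and control the remaining $\|u_t\|^2=\|v\|^2$ by the uniform bound (\ref{acoustic-bound}), while Lemma \ref{t:Gronwall-bound} supplies the integrated dissipation needed to close the Gronwall estimate and produce genuine decay. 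This yields
\begin{equation*}
\|w_t(t)\|_1^2+\|w_{tt}(t)\|^2+\|\theta_t(t)\|_{L^2(\Gamma)}^2+\ep\|\theta_{tt}(t)\|_{L^2(\Gamma)}^2\le Q(R)e^{-\omega_{4\ep}t}+C_\ep,
\end{equation*}
the parameter $\ep$ entering through the boundary coefficients and degrading the decay rate to $\omega_{4\ep}\sim\ep$.

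With the time derivatives in hand I would recover the $\mathcal{D}_\ep$-norm in two steps. For the boundary part, treating the trace $w_t|_\Gamma\in H^{1/2}(\Gamma)$ (available since $w_t\in H^1(\Omega)$ is now bounded) as given forcing, I would apply $(-\Delta_\Gamma)^{1/4}$ to $\ep\theta_{tt}+\theta_t+\theta=-w_t$ and repeat the damped-oscillator energy estimate in $L^2(\Gamma)$; by (\ref{LB-norm}) this upgrades $\theta,\theta_t$ to $H^{1/2}(\Gamma)$, controlling $\|\theta\|_{H^{1/2}(\Gamma)}^2+\ep\|\theta_t\|_{H^{1/2}(\Gamma)}^2$ in terms of $\|w_t\|_{H^{1/2}(\Gamma)}^2$. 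For the interior part I would read the first line of (\ref{pde-w}) as the elliptic problem $-\Delta w+w=\beta u-w_{tt}-w_t-\psi(w)$ with Neumann data $\partial_{\bf{n}}w=\theta_t$; the right-hand side is bounded in $L^2(\Omega)$ by the preceding estimates, Lemma \ref{t:uniform-bound-w}, and (\ref{acoustic-bound}), so $H^2$-elliptic regularity gives $\|w\|_2\le C(\|\Delta w\|+\|\theta_t\|_{H^{1/2}(\Gamma)}+\|w\|)$. Summing the four contributions reproduces (\ref{a-exp-attr-1}).

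The hard part will be the boundary regularity together with the bookkeeping of powers of $\ep$. The $(-\Delta_\Gamma)^{1/4}$-energy estimate delivers $H^{1/2}(\Gamma)$-control of $\theta_t$ only with an $\ep$-weight, so that $\|\theta_t\|_{H^{1/2}(\Gamma)}\le C\ep^{-1/2}(\cdots)$; feeding this into the elliptic bound for $\|w\|_2$ injects inverse powers of $\ep$, and compounding them with the $\ep^{-1}$-type constants already present in Lemma \ref{t:Gronwall-bound} and Lemma \ref{t:uniform-decay} is what produces the asserted $R_{2\ep}\sim\ep^{-4}$ while the decay survives only at order $\omega_{4\ep}\sim\ep$. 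Tracking these constants \emph{uniformly}—rather than merely establishing boundedness, which the strong-solution theory of Theorem \ref{t:ws-a} already provides—is the delicate point, and it is precisely this $\ep$-degeneracy that obstructs any uniform control on the dimension of $\mathcal{A}_\ep$.
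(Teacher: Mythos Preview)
Your proposal is correct and follows essentially the same approach as the paper, which defers to \cite[Lemma 8]{Frigeri10} and \cite[Lemma 3.16]{Gal&Shomberg15}: differentiate (\ref{pde-w}) in $t$, run the Lyapunov/Gronwall argument of Lemma \ref{t:a-abs-set} on the differentiated system (with the nonlinear term $\langle\psi'(w)a,a\rangle$ absorbed into the functional and Lemma \ref{t:Gronwall-bound} feeding Proposition \ref{GL}), then recover $\|w\|_2$ by $H^2$-elliptic regularity and $\|\theta\|_{H^{1/2}(\Gamma)},\ep\|\theta_t\|_{H^{1/2}(\Gamma)}$ via the $(-\Delta_\Gamma)^{1/4}$-multiplier on the boundary ODE. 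Your identification of where the inverse powers of $\ep$ enter---through the $\ep$-weighted control of $\theta_t$ in $H^{1/2}(\Gamma)$ compounding with the $\ep^{-1}$-constants in the Gronwall step---is exactly the mechanism behind $R_{2\ep}\sim\ep^{-4}$ and $\omega_{4\ep}\sim\ep$.
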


\begin{remark}
Thus, so far, we have shown that the semiflows associated with Problem (T) and Problem (A) admit global attractors that are bounded, independent of the perturbation parameter $\ep$, in the space $\mathcal{H}_\ep$. 
Although each attractor is also bounded in $\mathcal{D}_\ep$, the bound for those $\mathcal{A}_\ep\subset\mathcal{U}_\ep$, $\ep\in(0,1]$, {\em{does}} depend on $\ep$ in a crucial way; precisely, $R_{2\ep}\rightarrow+\infty$ as $\ep\rightarrow0.$
\end{remark}

We now turn to the existence of exponential attractors for each $\varepsilon\in(0,1]$ for Problem (A). 
By \cite[Theorem 5]{Frigeri10}, we already know that Problem (A) with $\ep=1$ admits an exponential attractor described by Theorem \ref{eaa} below.
We aim to show that the results for the perturbed case follow from \cite{Frigeri10} after suitable modifications to include $\varepsilon\in(0,1]$ appearing in the boundary condition (\ref{abc}). 

\begin{theorem}  \label{eaa} 
Assume (\ref{reg-assf-2}), (\ref{assf-2}), and (\ref{reg-assf-3}) hold.
For each $\varepsilon \in (0,1]$, the dynamical system $(S_\ep,\mathcal{H}_\ep)$ associated with Problem (A) admits an exponential attractor $\mathcal{M}_{\varepsilon }$ compact in $\mathcal{H}_{\varepsilon},$ and bounded in $\mathcal{D}_\ep$. 
Moreover, for each $\ep\in(0,1]$ fixed, there hold:

(i) For each $t\geq 0$, $S_{\varepsilon }(t)\mathcal{M}_{\varepsilon}\subseteq \mathcal{M}_{\varepsilon }$.

(ii) The fractal dimension of $\mathcal{M}_{\varepsilon }$ with respect to the metric $\mathcal{H}_{\varepsilon }$ is finite, namely,
\begin{equation*}
\dim_{F}\left( \mathcal{M}_{\varepsilon },\mathcal{H}_{\varepsilon }\right) \leq C_{\varepsilon }<\infty,
\end{equation*}
for some positive constant $C=C_\ep$ depending on $\varepsilon$.

(iii) There exists a nonnegative monotonically increasing function $Q_{\ep}$ and a positive constant $\varrho_{1\ep}>0$, both depending on $\ep$, such that, for all $t\geq 0$, 
\begin{equation*}
dist_{\mathcal{H}_{\varepsilon }}(S_{\varepsilon }(t)B,\mathcal{M}_{\varepsilon })\leq Q_{\ep}(\Vert B\Vert _{\mathcal{H}_{\varepsilon }})e^{-\varrho_{1\ep} t},
\end{equation*}
for every nonempty bounded subset $B$ of $\mathcal{H}_{\varepsilon }$.
\end{theorem}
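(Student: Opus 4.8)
The plan is to apply the abstract theory of exponential attractors (e.g. \cite[Proposition 1]{EMZ00}, \cite{FGMZ04}, \cite{GGMP05}) to the discrete dynamical system generated by $S_\ep(t_*)$ for a conveniently fixed time $t_*>0$, and then to lift the resulting discrete exponential attractor back to the continuous semiflow. Throughout, $\ep\in(0,1]$ is held \emph{fixed}, so constants are allowed to depend on $\ep$. The abstract machinery needs three ingredients: a compact, positively invariant absorbing set; a smoothing (contraction plus compact) estimate for the difference of two trajectories issuing from that set; and H\"older continuity of the map $(t,\zeta)\mapsto S_\ep(t)\zeta$.

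First I would produce a compact positively invariant absorbing set. Combining the decay estimate of Lemma \ref{t:uniform-decay} with the regularization estimate of Lemma \ref{compact-a}, the same transitivity-type argument used to obtain Theorem \ref{optimal-a} shows that $S_\ep(t)$ maps bounded subsets of $\mathcal{H}_\ep$, after a finite entry time, into a set $\mathcal{U}_\ep$ bounded in $\mathcal{D}_\ep$ of radius $\sim\ep^{-4}$. Since the embedding $\mathcal{D}_\ep\hookrightarrow\mathcal{H}_\ep$ is compact (Rellich--Kondrachov together with $H^{1/2}(\Gamma)\hookrightarrow\hookrightarrow L^2(\Gamma)$), a suitable closed bounded subset of $\mathcal{D}_\ep$ furnishes a compact absorbing set; passing to the closure in $\mathcal{H}_\ep$ of $\bigcup_{t\ge t_*}S_\ep(t)\mathcal{B}_\ep$ for $t_*$ large yields a set $\mathcal{B}_\ep^*$ that is in addition positively invariant.

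Second---the crux---I would establish the discrete smoothing property on $\mathcal{B}_\ep^*$. For two solutions $\zeta_1,\zeta_2$ with data in $\mathcal{B}_\ep^*$, set $\bar\zeta=\zeta_1-\zeta_2$ and split it as in (\ref{pde-v})-(\ref{pde-w}): a part $\bar\xi$ solving a homogeneous-type system that decays exponentially in $\mathcal{H}_\ep$ at rate $\omega_{3\ep}\sim\ep$, and a part $\bar\chi$ whose $\mathcal{D}_\ep$-norm is controlled by $\|\bar\zeta(0)\|_{\mathcal{H}_\ep}$. Following \cite{Frigeri10} and the $H^2$-elliptic-regularity-after-time-differentiation scheme of \cite{Pata&Zelik06}, one differentiates the equations in $t$, tests with the appropriate multipliers, and uses the critical growth (\ref{reg-assf-2}) together with $f'\ge-\vartheta$ from (\ref{reg-assf-3}) to close the higher-order estimate. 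Choosing $t_*$ large enough that the $\bar\xi$-contribution is a genuine contraction on $\mathcal{H}_\ep$, the map $S:=S_\ep(t_*)$ then admits a decomposition $S=C+K$ with $C$ a contraction and $K$ mapping into a bounded subset of the compactly embedded space $\mathcal{D}_\ep$, whence the abstract theorem produces a discrete exponential attractor $\mathcal{M}_\ep^d$ of finite fractal dimension for $(S,\mathcal{B}_\ep^*)$.

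Finally I would transfer $\mathcal{M}_\ep^d$ to continuous time. The Lipschitz estimate (\ref{sf-a-lc}) of Corollary \ref{sf-a} gives spatial Lipschitz continuity, and the energy identity (\ref{acoustic-energy-3}) yields $\tfrac12$-H\"older continuity of $t\mapsto S_\ep(t)\zeta$ on bounded sets; hence $\mathcal{M}_\ep:=\bigcup_{t\in[0,t_*]}S_\ep(t)\mathcal{M}_\ep^d$ is a positively invariant set, compact in $\mathcal{H}_\ep$ and bounded in $\mathcal{D}_\ep$, with finite fractal dimension and exponential attraction, establishing (i)-(iii). The main obstacle is the smoothing estimate of the third paragraph: because fractional powers of the Laplacian subject to the acoustic boundary condition are unavailable, the regularity must be extracted by time-differentiation and elliptic regularity, and the multipliers needed to absorb the boundary contributions $\tfrac1\ep\|\theta\|^2_{L^2(\Gamma)}$ and $\tfrac1\ep\langle\cdot,\cdot\rangle_{L^2(\Gamma)}$ introduce factors of $\ep^{-1}$. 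This is precisely why the attraction rate $\varrho_{1\ep}$ and the dimension bound $C_\ep$ deteriorate as $\ep\to0$ and cannot be taken uniform in $\ep$.
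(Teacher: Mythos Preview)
Your proposal is correct and follows essentially the same route as the paper: both invoke the abstract exponential-attractor scheme of \cite{EMZ00,FGMZ04,GGMP05} (recorded in the paper as Proposition \ref{abstract-a}) and verify its hypotheses via the $\mathcal{D}_\ep$-bounded absorbing set of Lemma \ref{t:to-H1}, the contraction-plus-smoothing decomposition of the difference of two trajectories obtained from the splitting (\ref{pde-v})--(\ref{pde-w}) and the time-differentiation/$H^2$-regularity method of \cite{Pata&Zelik06,Frigeri10}, and the Lipschitz/H\"older continuity of the solution map. Your explanation of why the constants blow up as $\ep\to0$ (the $\ep^{-1}$ boundary multipliers) matches the paper's Remark \ref{time-2} and the $\ep$-dependence recorded in Lemmas \ref{t:uniform-decay}--\ref{compact-a}.
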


As with Problem (T) above, the proof of Theorem \ref{eaa} will follow from the application of an abstract proposition reported specifically for our current case below (see, e.g., \cite[Proposition 1]{EMZ00}, \cite{FGMZ04}, \cite{GGMP05}).

\begin{proposition}  \label{abstract-a}
Let $(S_{\varepsilon},\mathcal{H}_{\varepsilon}) $ be a dynamical system for each $\varepsilon >0$. 
Assume the following hypotheses hold for each $\ep\in(0,1]$:

\begin{enumerate}

\item[(H1)] There exists a bounded absorbing set $\mathcal{B}_{\varepsilon
}^{1}\subset \mathcal{D}_{\varepsilon }$ which is positively invariant for $S_{\varepsilon }(t).$ 
More precisely, there exists a time $t_{2\ep}>0,$ which \emph{depends} on $\varepsilon >0$, such that
\begin{equation*}
S_{\varepsilon }(t)\mathcal{B}_{\varepsilon }^{1}\subset \mathcal{B}_{\varepsilon }^{1}
\end{equation*}
for all $t\geq t_{2\ep}$ where $\mathcal{B}_{\varepsilon }^{1}$ is endowed with
the topology of $\mathcal{H}_{\varepsilon }.$

\item[(H2)] There is $t^*_\ep \ge t_{2\ep}$ such that the map $S_{\varepsilon
}(t^*)$ admits the decomposition, for each $\varepsilon \in (0,1]$ and
for all $\zeta_{0},\xi_{0}\in \mathcal{B}_{\varepsilon }^{1}$, 
\begin{equation*}
S_{\varepsilon }(t^*)\zeta_{0}-S_{\varepsilon }(t^*)\xi_{0}=L_{\varepsilon }(\zeta_{0},\xi_{0})+R_{\varepsilon }(\zeta_{0},\xi_{0})
\end{equation*}
where, for some constants $\alpha^*\in(0,\frac{1}{2})$ and $\Lambda
^*_\ep = \Lambda^*(\ep,\Omega,t^*)\geq 0$, the following hold:
\begin{equation}\label{dd-l-a}
\Vert L_{\varepsilon }(\zeta_{0},\xi_{0})\Vert _{\mathcal{H}_{\varepsilon }}\leq \alpha^*\Vert \zeta_{0}-\xi_{0}\Vert _{\mathcal{H}_{\varepsilon }}  
\end{equation}
and
\begin{equation}  \label{dd-k-a}
\Vert R_{\varepsilon }(\zeta_{0},\xi_{0})\Vert _{\mathcal{D}_{\varepsilon }}\leq \Lambda^*_\ep \Vert \zeta_{0}-\xi_{0}\Vert _{\mathcal{H}_{\varepsilon }}.
\end{equation}

\item[(H3)] The map
\begin{equation*}
(t,U)\mapsto S_{\varepsilon }(t)\zeta:[t^*_\ep,2t^*_\ep]\times \mathcal{B}_{\varepsilon }^{1}\rightarrow \mathcal{B}_{\varepsilon }^{1}
\end{equation*}
is Lipschitz continuous on $\mathcal{B}_{\varepsilon }^{1}$ in the topology
of $\mathcal{H}_{\varepsilon}$.
\end{enumerate}

Then, $(S_{\varepsilon },\mathcal{H}_{\varepsilon })$ possesses
an exponential attractor $\mathcal{M}_{\varepsilon }$ in $\mathcal{B}_{\varepsilon }^{1}.$
\end{proposition}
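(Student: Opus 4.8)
The plan is to fix $\ep\in(0,1]$ throughout, so that every constant produced below is permitted to depend on $\ep$, and to reduce the continuous problem to a discrete one generated by a single time step. First I would record the compact embedding $\mathcal{D}_\ep\hookrightarrow\hookrightarrow\mathcal{H}_\ep$, which follows from the Rellich--Kondrachov theorem applied to $H^2(\Omega)\hookrightarrow\hookrightarrow H^1(\Omega)$ and $H^1(\Omega)\hookrightarrow\hookrightarrow L^2(\Omega)$, together with the compactness of the boundary embedding $H^{1/2}(\Gamma)\hookrightarrow\hookrightarrow L^2(\Gamma)$ on the compact manifold $\Gamma$. This compactness is exactly what converts the bound (\ref{dd-k-a}) on $R_\ep$, measured in the stronger norm $\mathcal{D}_\ep$, into the smoothing property that the abstract construction requires.

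Then I would consider the discrete map $\mathbb{S}:=S_\ep(t^*_\ep):\mathcal{B}_\ep^1\rightarrow\mathcal{B}_\ep^1$. By (H1) the set $\mathcal{B}_\ep^1$ is bounded in $\mathcal{D}_\ep$, absorbing and positively invariant in the topology of $\mathcal{H}_\ep$, so $\mathbb{S}$ is a well-defined self-map of $\mathcal{B}_\ep^1$; by (H2), for all $\zeta_0,\xi_0\in\mathcal{B}_\ep^1$ we have $\mathbb{S}\zeta_0-\mathbb{S}\xi_0=L_\ep(\zeta_0,\xi_0)+R_\ep(\zeta_0,\xi_0)$, where $L_\ep$ is a contraction of ratio $\alpha^*\in(0,\tfrac12)$ in $\mathcal{H}_\ep$ and $R_\ep$ is bounded by $\Lambda^*_\ep$ in $\mathcal{D}_\ep$. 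This is precisely the hypothesis structure of the abstract discrete result of \cite[Proposition 1]{EMZ00} (see also \cite{FGMZ04,GGMP05}). The requirement $\alpha^*<\tfrac12$ is what makes the standard covering/iteration argument close: at each step the contraction shrinks a covering ball by $\alpha^*$, while the smoothing term is covered, via the compact embedding $\mathcal{D}_\ep\hookrightarrow\hookrightarrow\mathcal{H}_\ep$, by a fixed finite number of $\mathcal{H}_\ep$-balls, producing geometrically controlled entropy. The cited construction then yields a compact, positively invariant discrete exponential attractor $\mathcal{M}_\ep^{\rm d}\subset\mathcal{B}_\ep^1$ with $\dim_{\rm F}(\mathcal{M}_\ep^{\rm d},\mathcal{H}_\ep)<\infty$ and $\mathrm{dist}_{\mathcal{H}_\ep}(\mathbb{S}^n\mathcal{B}_\ep^1,\mathcal{M}_\ep^{\rm d})\le C\kappa^n$ for some $\kappa\in(0,1)$.

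Finally I would pass from discrete to continuous time. Define
\[
\mathcal{M}_\ep:=\bigcup_{t\in[t^*_\ep,2t^*_\ep]}S_\ep(t)\mathcal{M}_\ep^{\rm d}.
\]
Positive invariance follows from the semigroup property together with $\mathbb{S}\mathcal{M}_\ep^{\rm d}\subseteq\mathcal{M}_\ep^{\rm d}$; finiteness of the fractal dimension is preserved because the map $(t,\zeta)\mapsto S_\ep(t)\zeta$ is Lipschitz on $[t^*_\ep,2t^*_\ep]\times\mathcal{M}_\ep^{\rm d}$ by (H3), so the image of the finite-dimensional set $\mathcal{M}_\ep^{\rm d}$ under this Lipschitz map has dimension at most $\dim_{\rm F}(\mathcal{M}_\ep^{\rm d},\mathcal{H}_\ep)+1$; and exponential attraction of $\mathcal{B}_\ep^1$ in continuous time is recovered by writing $t=nt^*_\ep+r$ with $r\in[0,t^*_\ep)$, applying the discrete attraction estimate and then the uniform-in-time Lipschitz continuity of $S_\ep(r)$. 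Since by (H1) the set $\mathcal{B}_\ep^1$ absorbs every bounded subset $B\subset\mathcal{H}_\ep$ in finite time, this upgrades to exponential attraction of all bounded sets. Compactness of $\mathcal{M}_\ep$ in $\mathcal{H}_\ep$ follows from the compactness of $\mathcal{M}_\ep^{\rm d}$ and the continuity of the above map on a compact set, while boundedness of $\mathcal{M}_\ep$ in $\mathcal{D}_\ep$ is inherited from $\mathcal{B}_\ep^1$.

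I expect the main obstacle to be the bookkeeping required to check that the qualitative hypotheses (H1)--(H3) match, verbatim, the assumptions of the cited abstract construction---in particular that the compact embedding $\mathcal{D}_\ep\hookrightarrow\hookrightarrow\mathcal{H}_\ep$ supplies the compactness the abstract theorem tacitly needs, and that every $\ep$-dependent quantity ($t^*_\ep$, $\Lambda^*_\ep$, and the resulting dimension bound and attraction rate) is carried through consistently without any spurious claim of uniformity in $\ep$. The genuine analytical content, namely the entropy/covering estimate, is inherited from \cite{EMZ00,FGMZ04,GGMP05}, so the real burden here is this verification together with the continuous-time lift, rather than a new compactness argument.
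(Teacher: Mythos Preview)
Your sketch is correct and follows the standard construction from \cite{EMZ00,FGMZ04,GGMP05}: compact embedding $\mathcal{D}_\ep\hookrightarrow\hookrightarrow\mathcal{H}_\ep$, the discrete squeezing/covering argument for the time-$t^*_\ep$ map on $\mathcal{B}^1_\ep$, and the Lipschitz lift to continuous time via (H3). Note, however, that the paper does not give its own proof of this proposition at all; it is simply quoted as an abstract result from the cited references and then \emph{applied} (the paper's work goes into verifying (H1)--(H3) in Lemmas \ref{t:to-H1} and \ref{t:to-H2}), so there is no original argument to compare against---your write-up is essentially a faithful outline of the proof one finds in those references.
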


\begin{remark}
As in the case for Problem (T), the basin of exponential attraction for the exponential attractors for Problem (A) is indeed the entire phase space thanks to (\ref{a-exp-attr-1}) below.
This can be shown by using the exponential attraction of subsets of $\mathcal{B}^1_\varepsilon$ to $\mathcal{M}_\ep$, and by referring to the transitivity of exponential attraction (see Lemma \ref{t:exp-attr}).
\end{remark}

\begin{corollary}  \label{dim}
Under the assumptions of Theorem \ref{eaa}, for each $\ep\in(0,1]$, the global attractors of Theorem \ref{t:gattr-a} are bounded in $\mathcal{D}_\ep$.

In addition, there holds
\begin{equation*}
\dim_{F}(\mathcal{A}_\varepsilon,\mathcal{H}_\varepsilon)\leq \dim_{F}(\mathcal{M}_\varepsilon,\mathcal{H}_\varepsilon)\le C_\ep,
\end{equation*}
for some constant $C_\ep>0$, depending on $\ep.$
As a consequence, each of the global attractors $\mathcal{A}_\varepsilon$, for $\ep\in(0,1]$, is finite dimensional; however, the dimension of $\mathcal{M}_{\varepsilon }$ is not necessarily uniform with respect to $\ep$.
\end{corollary}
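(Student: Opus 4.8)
The plan is to reduce every assertion to the single set inclusion $\mathcal{A}_\ep\subseteq\mathcal{M}_\ep$, which I establish first for each fixed $\ep\in(0,1]$. By Theorem \ref{t:gattr-a}(1) the global attractor is fully invariant, so $S_\ep(t)\mathcal{A}_\ep=\mathcal{A}_\ep$ for every $t\ge0$; in particular $\mathcal{A}_\ep$ is a bounded subset of $\mathcal{H}_\ep$. The exponential attraction property of $\mathcal{M}_\ep$ recorded in Theorem \ref{eaa}(iii) then applies with $B=\mathcal{A}_\ep$, giving $\mathrm{dist}_{\mathcal{H}_\ep}(S_\ep(t)\mathcal{A}_\ep,\mathcal{M}_\ep)\le Q_\ep(\|\mathcal{A}_\ep\|_{\mathcal{H}_\ep})e^{-\varrho_{1\ep}t}$ for all $t\ge0$. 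Substituting $S_\ep(t)\mathcal{A}_\ep=\mathcal{A}_\ep$ on the left and letting $t\to\infty$ forces $\mathrm{dist}_{\mathcal{H}_\ep}(\mathcal{A}_\ep,\mathcal{M}_\ep)=0$, and since $\mathcal{M}_\ep$ is compact, hence closed, in $\mathcal{H}_\ep$ by Theorem \ref{eaa}, the inclusion $\mathcal{A}_\ep\subseteq\mathcal{M}_\ep$ follows.

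With the inclusion in hand, both displayed conclusions are immediate. Since Theorem \ref{eaa} guarantees that $\mathcal{M}_\ep$ is bounded in $\mathcal{D}_\ep$, the subset $\mathcal{A}_\ep$ inherits this bound, which proves the first statement. For the dimension estimate I would invoke the elementary monotonicity of fractal dimension under inclusion: if $X\subseteq Y$ then any covering of $Y$ by $r$-balls of $\mathcal{H}_\ep$ also covers $X$, so $\mu_{\mathcal{H}_\ep}(X,r)\le\mu_{\mathcal{H}_\ep}(Y,r)$ and therefore $\dim_F(X,\mathcal{H}_\ep)\le\dim_F(Y,\mathcal{H}_\ep)$. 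Taking $X=\mathcal{A}_\ep$, $Y=\mathcal{M}_\ep$ and chaining with the finite bound $\dim_F(\mathcal{M}_\ep,\mathcal{H}_\ep)\le C_\ep$ from Theorem \ref{eaa}(ii) produces the asserted inequality, and finite dimensionality of $\mathcal{A}_\ep$ is then clear.

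There is no genuine analytic obstacle here; every step is a standard consequence of the exponential attractor already constructed in Theorem \ref{eaa}. The only point meriting emphasis is the source of the constant $C_\ep$: it arises in the fractal-dimension bound for $\mathcal{M}_\ep$, whose construction via Proposition \ref{abstract-a} relies on the smoothing constant $\Lambda^*_\ep=\Lambda^*(\ep,\Omega,t^*)$ in hypothesis (H2). Because this constant is not controlled uniformly as $\ep\to0$, the bound $C_\ep$ cannot be taken independent of $\ep$, which is precisely the content of the closing caveat in the statement; no further argument is required for that remark.
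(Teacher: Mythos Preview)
Your proposal is correct and follows precisely the standard argument the paper relies on. The paper itself provides no explicit proof for this corollary, treating it as an immediate consequence of the general fact (stated in the introduction) that whenever both a global attractor $\mathcal{A}$ and an exponential attractor $\mathcal{M}$ exist one has $\mathcal{A}\subseteq\mathcal{M}$; your derivation of that inclusion via invariance of $\mathcal{A}_\ep$, exponential attraction of $\mathcal{M}_\ep$, and closedness of $\mathcal{M}_\ep$ is exactly the intended justification, and the remaining conclusions (boundedness in $\mathcal{D}_\ep$, the dimension inequality, and the caveat about $C_\ep$) follow as you describe.
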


To prove Theorem \ref{eaa}, we apply the abstract result expressed in Proposition \ref{abstract-a}.
As a preliminary step, we make an observation on the energy equation (\ref{acoustic-energy-3}) associated with Problem (A).

We now show that the assumptions (H1)-(H3) hold for $(S_{\varepsilon}(t),\mathcal{H}_{\varepsilon })$, for each $\ep\in(0,1]$. 

\begin{lemma}  \label{t:to-H1} 
Let the assumptions of Theorem \ref{eaa} be satisfied.
Condition (H1) holds for fixed $\varepsilon \in (0,1]$. 
Moreover, for each $\ep\in(0,1]$, and for each $R>0$ and $\zeta\in\mathcal{D}_\ep$ such that $\|\zeta\|_{\mathcal{D}_\ep}\le R$ for all $\ep\in(0,1]$, there exist constants $\omega_{5\ep},R_{3\ep}>0$, both depending on $\ep$, with $\omega_{5\ep}\sim\ep$ and $R_{3\ep}\sim\ep^{-4}$, in which, for all $t\ge0$, there holds
\begin{equation}  \label{a-exp-attr-2}
\|S(t)\zeta_0\|^2_{\mathcal{D}_\ep} \le Q(R)e^{-\omega_{5\ep}t/2} + R_{3\ep}.
\end{equation}
\end{lemma}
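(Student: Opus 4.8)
The plan is to prove the dissipative higher--order estimate (\ref{a-exp-attr-2}) directly and then read (H1) off from it; the computation mirrors the one behind Lemma \ref{compact-a}, now carried out for the full flow $S_\ep$ rather than for $K_\ep$. Since $\zeta_0\in\mathcal{D}_\ep$, Theorem \ref{t:ws-a} provides a global strong solution $\zeta(t)=S_\ep(t)\zeta_0=(u,u_t,\delta,\delta_t)$, so every component of $\|\zeta(t)\|^2_{\mathcal{D}_\ep}=\|u\|_2^2+\|u_t\|_1^2+\|\delta\|^2_{H^{1/2}(\Gamma)}+\ep\|\delta_t\|^2_{H^{1/2}(\Gamma)}$ is finite and differentiation in $t$ is legitimate. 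First I would differentiate (\ref{pde})--(\ref{abc}) in time, writing $U:=u_t$ and $\Sigma:=\delta_t$, which yields the interior equation $U_{tt}+U_t-\Delta U+U+f'(u)U=0$ together with the perturbed acoustic pair $\ep\Sigma_{tt}+\Sigma_t+\Sigma=-U_t$, $\Sigma_t=\partial_{\bf{n}}U$. The object to control is the higher--order energy $\mathcal{E}_1(t):=\|(U,U_t,\Sigma,\Sigma_t)\|^2_{\mathcal{H}_\ep}$, whose value at $t=0$ is bounded by $Q(R)$: indeed $U_t(0)=u_{tt}(0)=\Delta u_0-u_0-u_1-f(u_0)$ is estimated in $L^2(\Omega)$ through $\|\zeta_0\|_{\mathcal{D}_\ep}\le R$ and the critical growth (\ref{reg-assf-2}).

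Next I would run the weighted energy argument already used for the absorbing set, testing the differentiated interior equation against $2U_t+2\eta U$ and the differentiated boundary equation against $2\Sigma_t+2\eta\Sigma$, and summing with the identity handling $\frac{d}{dt}\langle U,\Sigma\rangle_{L^2(\Gamma)}$, exactly as in (\ref{qaz-2}). The only genuinely nonlinear contribution is $2\langle f'(u)U,U_t\rangle$; I would rewrite it as $\frac{d}{dt}\langle f'(u)U,U\rangle-\langle f''(u)u_tU,U\rangle$, absorb the first piece into the energy (it remains coercive up to a $\vartheta$--term by the sign condition (\ref{reg-assf-3})), and bound the second using (\ref{reg-assf-2}) and the embeddings on $\Omega\subset\mathbb{R}^3$ against the uniform $\mathcal{H}_\ep$--bound (\ref{acoustic-bound}). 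This produces a differential inequality $\frac{d}{dt}\mathcal{E}_1+\omega\,\mathcal{E}_1\le C\,(\|u_t\|^2+\|\delta_t\|^2_{L^2(\Gamma)}+1)$ of dissipative type, whose coercivity rate $\omega$ scales like $\ep$ because the boundary dissipation is weighted by the $1/\ep$ entries of $A_\ep$. A Gronwall integration, fed by the averaged--dissipation observation drawn from the energy equation (\ref{acoustic-energy-3}) (of the form $\int_s^t(\|u_t\|^2+\|\delta_t\|^2_{L^2(\Gamma)})\,d\tau\le\frac{\eta}{2}(t-s)+Q_\eta$, cf. Lemma \ref{t:Gronwall-bound}), then gives $\mathcal{E}_1(t)\le Q(R)e^{-\omega t}+C\,\omega^{-1}$.

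It remains to convert the bound on $\mathcal{E}_1$ into the full $\mathcal{D}_\ep$--bound by elliptic regularity. Viewing (\ref{pde}) as the Neumann problem $-\Delta u=-u_{tt}-u_t-u-f(u)$ in $\Omega$ with $\partial_{\bf{n}}u=\delta_t=\Sigma$ on $\Gamma$, $H^2$--regularity for the Neumann--Laplacian bounds $\|u\|_2$ by $\|u_{tt}\|+\|u_t\|+\|u\|+\|f(u)\|+\|\Sigma\|_{H^{1/2}(\Gamma)}$, every term of which is already controlled except the boundary piece; that piece, together with $\|\delta\|_{H^{1/2}(\Gamma)}$ and $\|\delta_t\|_{H^{1/2}(\Gamma)}$, I would recover from the acoustic equation and the continuity relation $\Sigma_t=\partial_{\bf{n}}U$ via the Laplace--Beltrami operator $-\Delta_\Gamma$ and the norm (\ref{LB-norm}). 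Collecting everything yields (\ref{a-exp-attr-2}) with $\omega_{5\ep}\sim\ep$ and $R_{3\ep}\sim\ep^{-4}$, and (H1) is then immediate: with $\rho_\ep:=(2R_{3\ep})^{1/2}$ the ball $\mathcal{B}^1_\ep:=\{\zeta\in\mathcal{D}_\ep:\|\zeta\|_{\mathcal{D}_\ep}\le\rho_\ep\}$ is bounded and, since the transient $Q(R)e^{-\omega_{5\ep}t/2}$ falls below $R_{3\ep}$ after some time $t_{2\ep}$ that depends on $\ep$ through $\omega_{5\ep}$ and $R_{3\ep}$, one has $S_\ep(t)\mathcal{B}^1_\ep\subset\mathcal{B}^1_\ep$ for $t\ge t_{2\ep}$, so $\mathcal{B}^1_\ep$ is positively invariant and absorbing in $\mathcal{D}_\ep$ in the topology of $\mathcal{H}_\ep$.

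I expect the main obstacle to be the boundary regularity combined with the explicit bookkeeping of $\ep$. Because fractional powers of the Laplacian subject to the acoustic condition are undefined, the $H^{1/2}(\Gamma)$ control of $\delta$ and $\delta_t$ cannot be read off from the interior estimate and must instead be squeezed out through $-\Delta_\Gamma$ and the continuity condition $\Sigma_t=\partial_{\bf{n}}U$; it is precisely the repeated $1/\ep$ weights entering these boundary and elliptic estimates that accumulate to $R_{3\ep}\sim\ep^{-4}$ and obstruct any bound uniform in $\ep$ --- the same degeneracy recorded in the remark following Lemma \ref{compact-a}.
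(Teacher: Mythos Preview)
Your proposal is correct and follows essentially the same route as the paper: the paper's proof simply says ``follow the proof of Lemma \ref{compact-a}, but now with $u$ in place of $w$, $\delta$ in place of $\theta$, and $\beta=0$,'' deferring the details to \cite[Lemma 8]{Frigeri10} and \cite[Lemma 3.16]{Gal&Shomberg15}, and what you have written is precisely the content of that deferred computation---differentiate in $t$, run the weighted energy argument on $(U,\Sigma)=(u_t,\delta_t)$, handle $f'(u)U$ via (\ref{reg-assf-3}) and (\ref{reg-assf-2}), close with the Gronwall inequality of Proposition \ref{GL} fed by the averaged dissipation, and finish with $H^2$ elliptic regularity plus the Laplace--Beltrami norm (\ref{LB-norm}) for the boundary pieces. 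Your reading of (H1) from (\ref{a-exp-attr-2}) also matches the paper's, including the $\ep$-dependent entry time $t_{2\ep}$.
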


\begin{proof} 
We follow the proof of Lemma \ref{compact-a}, but now with $u$ in place of $w$, $\delta$ in place of $\gamma,$ and $\beta=0$ (for more details, see \cite[Lemma 8]{Frigeri10} or \cite[Lemma 3.16]{Gal&Shomberg15}).
Following the same estimates, we deduce that, for some constants $\omega_{5\ep}\sim\ep$ and $R_{3\ep}\sim\ep^{-4}$, for all $t\ge 0,$
\begin{equation}  \label{for-C1-3}
\|\zeta(t)\|^2_{\mathcal{H}_\ep} \le Q(R) e^{-\omega_{5\ep} t/2} + R_{3\ep}.
\end{equation}
This shows (\ref{a-exp-attr-2}).

Furthermore, the existence of the bounded absorbing set now follows. 
Indeed, for each $\ep\in(0,1]$, there is a constant $R_{2\ep} > \widetilde R_{2\ep}$, where $R_{2\ep}\sim\ep^{-4}$, in which the set
\begin{equation}
\mathcal{B}_\ep^1:=\left\{ \zeta\in\mathcal{D}_\ep:\|\zeta\|_{\mathcal{D}_\ep}\le R_{3\ep} \right\}
\end{equation}
is a bounded absorbing set in $\mathcal{D}_\ep$, positively invariant for $S_\ep(t)$: for each $\ep\in(0,1]$ and for each bounded subset $B\subset\mathcal{H}_\ep$, there is $t_{2 \ep}=t_2(B,R_{3\ep})\ge0$, in which, for all $t\ge t_{2 \ep}$,
\[
S_\ep(t)B\subseteq\mathcal{B}_\ep^1,  
\]
with respect to the topology of $\mathcal{H}_\ep$ (obviously, the radius of $\mathcal{B}^1_\ep$ may need to be enlarged after applying the embedding $\mathcal{D}_\ep\hookrightarrow\mathcal{H}_\ep$).
This completes the proof of (H1).
\end{proof}

\begin{remark}  \label{time-2}
The ``time of entry'' of some bounded set $B\subset\mathcal{D}_\ep$ into $\mathcal{B}^1_\ep$ is given by
\[
t_{2\ep}=t_{2}(\|B\|_{\mathcal{H}_\ep},R_{3\ep}) = \max\left\{ \frac{1}{\omega_{5\ep}}\ln\left( \frac{Q\left(\|B\|_{\mathcal{H}_\ep}\right)}{R_{2\ep} - R_{3\ep}} \right),0 \right\}
\]
where $R_{2\ep}$ is the radius of the absorbing set $\mathcal{B}^1_\ep$ in $\mathcal{D}_\ep$.
Furthermore, both $R_{2 \ep}$ and $t_{2 \ep} \rightarrow+\infty$ as $\ep\rightarrow 0$.
\end{remark}

\begin{corollary}
Let the assumptions of Theorem \ref{eaa} be satisfied.
For each $\ep\in(0,1]$, $\zeta_0\in\mathcal{D}_\ep$, there is $P_{\ep}(R)=P(R_{3\ep},R)>0$, where $P_\ep\sim\ep^{-4}$, in which there holds, 
\begin{equation*}  \label{reg-bound}
\liminf_{t\rightarrow\infty}\|S_\ep(t)\zeta_0\|_{\mathcal{D}_\ep}\le P_\ep(R).
\end{equation*}
\end{corollary}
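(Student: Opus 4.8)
The plan is to read this corollary straight off the higher-order dissipative estimate already proved in Lemma \ref{t:to-H1}; no additional machinery is required, so this is really a one-line limit passage. Fix $\ep\in(0,1]$ and take $\zeta_0\in\mathcal{D}_\ep$ with $\|\zeta_0\|_{\mathcal{D}_\ep}\le R$. Lemma \ref{t:to-H1} furnishes, for all $t\ge0$, the bound
\[
\|S_\ep(t)\zeta_0\|^2_{\mathcal{D}_\ep} \le Q(R)e^{-\omega_{5\ep}t/2} + R_{3\ep},
\]
where the rate satisfies $\omega_{5\ep}\sim\ep>0$ and the asymptotic radius satisfies $R_{3\ep}\sim\ep^{-4}$, both for $\ep$ fixed.

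First I would pass to the limit superior as $t\to\infty$. Because $\ep$ is held fixed and $\omega_{5\ep}>0$ strictly, the transient term obeys $Q(R)e^{-\omega_{5\ep}t/2}\to0$, so that
\[
\limsup_{t\to\infty}\|S_\ep(t)\zeta_0\|^2_{\mathcal{D}_\ep} \le R_{3\ep}.
\]
Using $\liminf\le\limsup$ together with the monotonicity of the square root then gives
\[
\liminf_{t\to\infty}\|S_\ep(t)\zeta_0\|_{\mathcal{D}_\ep} \le \left(R_{3\ep}\right)^{1/2},
\]
and setting $P_\ep(R):=P(R_{3\ep},R)$ (for instance $P_\ep(R):=(R_{3\ep})^{1/2}$, or any quantity dominating the right-hand side) yields precisely the asserted inequality. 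The degeneracy $P_\ep(R)\to+\infty$ as $\ep\to0^{+}$ is then inherited directly from the $\ep$-dependence $R_{3\ep}\sim\ep^{-4}$ recorded in Lemma \ref{t:to-H1} and Remark \ref{time-2}.

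Since the entire argument is a limit of an already-established estimate, there is no genuine obstacle. The single point deserving a word of care is the strict positivity of the exponential rate $\omega_{5\ep}$ for each fixed $\ep$: this is exactly what guarantees that the transient term is killed in the $\limsup$, and it is supplied by Lemma \ref{t:to-H1} via $\omega_{5\ep}\sim\ep$. The harmless price is that the resulting $\ep$-uniform control is lost in the limit $\ep\to0$, which is consistent with all the other $\ep$-dependent bounds in this section.
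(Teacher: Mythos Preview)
Your proposal is correct and matches the paper's intent: the paper states this corollary without proof, treating it as an immediate consequence of the decay estimate (\ref{a-exp-attr-2}) in Lemma \ref{t:to-H1}, which is exactly what you do by passing to the limit. One cosmetic remark: since $R_{3\ep}\sim\ep^{-4}$ bounds the \emph{square} of the $\mathcal{D}_\ep$-norm, your choice $P_\ep(R)=(R_{3\ep})^{1/2}$ actually scales like $\ep^{-2}$, which is tighter than the $\ep^{-4}$ stated in the corollary; your hedge ``or any quantity dominating the right-hand side'' already accounts for this harmless discrepancy.
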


The proofs of the remaining conditions (H2) and (H3) follow directly from \cite{Frigeri10} with only minor modifications. 
However, in light of Remark \ref{time-2}, the results are not uniform in $\ep\in(0,1].$

\begin{lemma}  \label{t:to-H2} 
Let the assumptions of Theorem \ref{eaa} be satisfied.
Conditions (H2) and (H3) hold for each fixed $\varepsilon \in (0,1]$.
\end{lemma}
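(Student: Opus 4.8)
The plan is to verify (H2) and (H3) for each fixed $\ep\in(0,1]$ by transporting the splitting that already produced the global attractor---the decomposition $S_\ep = Z_\ep + K_\ep$ built from \eqref{pde-v}--\eqref{pde-w}---to the \emph{difference} of two trajectories issuing from the invariant set $\mathcal{B}^1_\ep$, following \cite[Theorem 5]{Frigeri10} and the parallel treatment in \cite{Gal&Shomberg15}. Throughout, $\ep$ is fixed, so every constant is permitted to depend on $\ep$; no uniformity is asserted.

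For (H2), fix $\zeta_0,\xi_0\in\mathcal{B}^1_\ep$ with corresponding strong solutions $\zeta=(u,u_t,\delta,\delta_t)$ and $\hat\zeta=(\hat u,\hat u_t,\hat\delta,\hat\delta_t)$, and set $\bar u = u-\hat u$, $\bar\delta=\delta-\hat\delta$, $\bar\zeta_0=\zeta_0-\xi_0$. Recalling $\psi(s)=f(s)+\beta s$ from \eqref{beta}, we have $f(u)-f(\hat u)=\psi(u)-\psi(\hat u)-\beta\bar u$, so the difference obeys $\bar u_{tt}+\bar u_t-\Delta\bar u+\bar u+\psi(u)-\psi(\hat u)=\beta\bar u$ together with $\ep\bar\delta_{tt}+\bar\delta_t+\bar\delta=-\bar u_t$ and $\bar\delta_t=\partial_{\bf{n}}\bar u$. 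I decompose $\bar\zeta=\bar\xi+\bar\chi$, where the \emph{decaying} part $\bar\xi=(\bar v,\bar v_t,\bar\gamma,\bar\gamma_t)$ solves a system of the type \eqref{pde-v} (nonlinearity $\psi(u)-\psi(\hat u)$, nonzero datum $\bar\zeta_0$) and the \emph{regularizing} part $\bar\chi=(\bar w,\bar w_t,\bar\theta,\bar\theta_t)$ solves a system of the type \eqref{pde-w} (forcing $\beta\bar u$, zero datum). I then set $L_\ep(\zeta_0,\xi_0):=\bar\xi(t^*_\ep)$ and $R_\ep(\zeta_0,\xi_0):=\bar\chi(t^*_\ep)$.

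The contraction bound \eqref{dd-l-a} comes from an energy estimate for $\bar\xi$ along the lines of Lemma \ref{t:uniform-decay}: multiply the $\bar v$-equation by $\bar v_t+\eta\bar v$ in $L^2(\Omega)$ and the $\bar\gamma$-equation by $\bar\gamma_t+\eta\bar\gamma$ in $L^2(\Gamma)$, sum, and use the monotonicity $\psi'\ge0$ from \eqref{reg-assf-3} (writing $\bar v=\bar u-\bar w$ so that $\langle\psi(u)-\psi(\hat u),\bar u\rangle\ge0$), the boundedness of trajectories on $\mathcal{B}^1_\ep$, and the integral control of Lemma \ref{t:Gronwall-bound} to absorb the indefinite terms; this yields $\|\bar\xi(t)\|^2_{\mathcal{H}_\ep}\le Q_\ep(R)e^{-\omega_\ep t}\|\bar\zeta_0\|^2_{\mathcal{H}_\ep}$ for some $\omega_\ep>0$. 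Since $t^*_\ep$ is free (only $t^*_\ep\ge t_{2\ep}$ is required), I fix it so large that $Q_\ep(R)e^{-\omega_\ep t^*_\ep}\le(\alpha^*)^2$ with $\alpha^*\in(0,\tfrac12)$, giving \eqref{dd-l-a}. For the smoothing bound \eqref{dd-k-a}, I treat $\bar\chi$ exactly as in the precompactness argument of Lemma \ref{compact-a}: differentiate the $\bar w$-system in $t$, close a higher-order energy estimate, and invoke $H^2$-elliptic regularity for the boundary value problem to obtain $\|\bar\chi(t^*_\ep)\|_{\mathcal{D}_\ep}\le\Lambda^*_\ep\|\bar\zeta_0\|_{\mathcal{H}_\ep}$; the dependence on $\bar\zeta_0$ is linear because both the forcing $\beta\bar u$ and the nonlinearity difference are controlled linearly in $\bar u$ on the bounded invariant set. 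This establishes (H2). Condition (H3) is then immediate: Lipschitz continuity of $\zeta_0\mapsto S_\ep(t)\zeta_0$, uniform on the compact interval $[t^*_\ep,2t^*_\ep]$, is \eqref{sf-a-lc}, while Lipschitz continuity in $t$ follows from $\partial_t\zeta\in L^\infty([0,\infty);\mathcal{H}_\ep)$ for strong solutions (see \eqref{acoustic-regularity-property}), so that $\|S_\ep(t)\zeta_0-S_\ep(s)\zeta_0\|_{\mathcal{H}_\ep}\le|t-s|\,\sup_\tau\|\partial_\tau\zeta(\tau)\|_{\mathcal{H}_\ep}$ with the supremum bounded over $\mathcal{B}^1_\ep$; combining the two gives joint Lipschitz continuity.

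The main obstacle is the smoothing estimate \eqref{dd-k-a}. Because fractional powers of the Laplacian subject to \eqref{abc} are unavailable, the $\mathcal{D}_\ep$-bound on $\bar\chi$ can only be produced by differentiating in $t$ and closing a higher-order energy inequality, and it is precisely here that $\ep$ enters the constant $\Lambda^*_\ep$ in an essential, non-uniform way---mirroring the $R_{2\ep}\sim\ep^{-4}$ blow-up in Lemma \ref{compact-a} as $\ep\to0$. This is the reason the exponential attractors cannot be shown to be robust, and why (H2)--(H3) are asserted only for each fixed $\ep$ rather than uniformly.
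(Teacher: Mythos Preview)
Your proposal is correct and follows essentially the same approach as the paper, which simply defers to \cite[Theorem 5]{Frigeri10} with minor modifications for $\ep\in(0,1]$; you have faithfully reconstructed that argument---the difference-level splitting into a monotone-damped contractive piece and a time-differentiated/elliptically-regularized compact piece for (H2), and the combination of \eqref{sf-a-lc} with the strong-solution regularity \eqref{acoustic-regularity-property} for (H3). Your closing remark about the non-uniform $\ep$-dependence of $\Lambda^*_\ep$ also matches the paper's explicit caveat following Remark~\ref{time-2}.
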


According to Proposition \ref{abstract-a}, the proof or Theorem \ref{eaa} is complete.
Recalling Corollary \ref{dim}, the dimension of the corresponding global attractor, $\mathcal{A}_{\ep}$, $\ep\in[0,1]$, is insured to possess {\em{finite}} dimension, but again, due to Remark \ref{time-2}, this bound is not necessarily uniform in $\ep$.

\section{The continuity properties of the attractors}  \label{s:cont}

This section contains a new abstract theorem (Theorem \ref{t:robustness}) concerning the upper-semicontinuity of a family of sets. 
The key assumption in the theorem involves a comparison of the semiflow corresponding to the unperturbed problem to the semiflow corresponding to the perturbation problem in the topology of the perturbation problem. 
The unperturbed problem is ``fitted'' into the phase space of the perturbed problem through the use of two maps, a {\em{canonical extension}} and {\em{lift}}. This approach for obtaining an upper-semicontinuous family of sets developed in this section is largely motivated by \cite{GGMP05}. In the setting presented in this paper, where the perturbation is isolated to the boundary condition, the upper-semicontinuity result is obtained for a broad range of families of sets and the overall analysis is much simpler. The result is applied to Problem (T) and Problem (A) in the final part of this section. 

The abstract upper-semicontinuity theorem is developed in this section.

\begin{definition} Given two bounded subsets $A$, $B$ in a Banach space $X$, the {\em Hausdorff (asymmetric) semidistance} between $A$ and $B$, in the topology of $X$, is defined by 
\[
dist_X(A,B):=\sup_{a\in A}\inf_{b\in B}\|a-b\|_X.
\]
\end{definition} 

Suppose $X_0$ is a Banach space with the norm $\|\chi\|_{X_0}$, for all $\chi\in X_0$, and suppose $Y$ is a Banach space with the norm $\|\psi\|_Y$, for all $\psi\in Y$. For $\ep\in(0,1]$, let $X_\ep$ be the one-parameter family of Banach product spaces 
\[
X_\ep=X_0\times Y
\] 
with the $\ep$-weighted norm whose square is given by 
\[
\|(\chi,\psi)\|^2_{X_\ep}=\|\chi\|^2_{X_0} + \ep\|\psi\|^2_{Y}.
\]
For each $\ep\in(0,1]$, let $S_\ep$ be a semiflow on $X_\ep$ and let $S_0$ be a semiflow on $X_0$. Let $\Pi:X_\ep\rightarrow X_0$ be the {\em{projection}} from $X_\ep$ onto $X_0$; for every subset $B_\ep\subset X_\ep$, $\Pi B_\ep=B_0\subset X_0$.

Define the ``lift'' map to map sets $B_0\subset X_0$ to sets in the product $X_\ep$. With the lift map it is possible to measure the semi-distance between sets from $X_0$ with sets in $X_\ep$, using the topology of $X_\ep$. 

\begin{definition}
Given a map $\mathcal{E}:X_0\rightarrow Y$, locally Lipschitz in $X_0$, the map $\mathcal{L}:X_0\rightarrow X_\ep$ defined by $\chi\mapsto (\chi,\mathcal{E}\chi)$ is called a {\em{lift}} map. 
The map $\mathcal{E}$ is called a {\em{canonical extension}}. 
If $B_0$ is a bounded subset of $X_0$, the set $\mathcal{E}B_0\subset Y$ is called the canonical extension of $B_0$ into $Y$, and the set 
\[
\mathcal{L}B_0=\{(\chi,\psi)\in X_\ep : \chi\in B_0, \ \psi\in \mathcal{E}B_0\}
\]
is called the lift of $B_0$ into $X_\ep$.
\end{definition}

What follows is a general description the type of families of sets that will be upper-semicontinuous in $X_\ep$.

Let $W_0$ be a bounded subset of $X_0$, and let $S_0$ be a semiflow on $X_0$. 
For each $\ep\in(0,1]$, let $W_\ep$ be a bounded subset of $X_\ep$, and let $S_\ep$ be a semiflow on $X_\ep$. 
Let $T>0$ and define the sets 
\begin{equation}\label{set-family-0}
\mathcal{U}_0 = \bigcup_{t\in[0,T]} S_0(t)W_0
\end{equation}
and
\begin{equation}\label{set-family-1}
\mathcal{U}_\ep = \bigcup_{t\in[0,T]} S_\ep(t)W_\ep.
\end{equation}
Define the family of sets $(\mathbb{U}_\ep)_{\ep\in[0,1]}$ in $X_\ep$ by
\begin{equation}\label{set-family-2}
\mathbb{U}_\ep=\left\{ \begin{array}{ll} \mathcal{U}_\ep & 0<\ep\leq 1 \\ \mathcal{L}\mathcal{U}_0 & \ep=0. \end{array} \right.
\end{equation}

\begin{remark}
The sets $W_0$ and $W_\ep$ are not assumed to be positively invariant.
\end{remark}

\begin{theorem}\label{t:robustness}
Suppose that the semiflow $S_0$ is Lipschitz continuous on $X_0$, uniformly in $t$ on compact intervals. 
Suppose the lift map $\mathcal{L}$ satisfies the following: for any $T>0$ and $B_\ep\subset X_\ep$ in which there exists $M=M(\|B_\ep\|_{X_\ep})>0$, depending on $B_\ep$, $\rho\in(0,1]$, both independent of $\ep$, such that for all $t\in [0,T]$ and $(\chi,\psi)\in B_\ep$,
\begin{equation}\label{robustness}
\|S_\ep(t)(\chi,\psi)-\mathcal{L}S_0(t)\Pi(\chi,\psi)\|_{X_\ep}\le M\ep^\rho.
\end{equation}
Then the family of sets $(\mathbb{U}_\ep)_{\ep\in[0,1]}$ is upper-semicontinuous in the topology of $X_\ep$; precisely, 
\[
{\rm dist}_{X_\ep}(\mathbb{U}_\ep,\mathbb{U}_0) \le M\ep^\rho.
\]
\end{theorem}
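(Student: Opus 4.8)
The plan is to bound the Hausdorff semidistance ${\rm dist}_{X_\ep}(\mathbb{U}_\ep,\mathbb{U}_0)$ directly: for each point of $\mathbb{U}_\ep$ I would exhibit a single explicit competitor in $\mathbb{U}_0$ whose $X_\ep$-distance to it is controlled by the robustness bound (\ref{robustness}). The case $\ep=0$ is vacuous, since there $\mathbb{U}_\ep=\mathbb{U}_0$ and the asserted estimate reads $0\le 0$; so I fix $\ep\in(0,1]$ throughout. Because the entire analytic burden has been pushed into hypothesis (\ref{robustness}), the argument is essentially a bookkeeping exercise once that estimate is granted.

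Before estimating, I would confirm that the semidistance is well-defined, i.e. that both sets are bounded. Boundedness of $\mathbb{U}_0=\mathcal{L}\mathcal{U}_0$ is where the hypothesis that $S_0$ is Lipschitz continuous on $X_0$, uniformly in $t$ on $[0,T]$, enters: comparing $S_0(t)\chi$ to $S_0(t)\chi_*$ for a fixed reference $\chi_*\in W_0$ shows that $\mathcal{U}_0=\bigcup_{t\in[0,T]}S_0(t)W_0$ is bounded in $X_0$; since the canonical extension $\mathcal{E}$ is locally Lipschitz, it maps the bounded set $\mathcal{U}_0$ to a bounded subset of $Y$, so $\mathbb{U}_0$ is bounded in $X_\ep$. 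Boundedness of $\mathbb{U}_\ep$ then follows a posteriori from the very estimate being proved together with boundedness of $\mathbb{U}_0$.

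Now for the core step. Let $a\in\mathbb{U}_\ep=\mathcal{U}_\ep$ be arbitrary; by (\ref{set-family-1}) we may write $a=S_\ep(t)(\chi,\psi)$ for some $t\in[0,T]$ and $(\chi,\psi)\in W_\ep$. I would take as competitor
\[
b:=\mathcal{L}S_0(t)\Pi(\chi,\psi)=\mathcal{L}S_0(t)\chi,
\]
using $\Pi(\chi,\psi)=\chi$. The key observation is that $b$ genuinely lies in $\mathbb{U}_0$: since $\chi=\Pi(\chi,\psi)\in\Pi W_\ep=W_0$, we have $S_0(t)\chi\in S_0(t)W_0\subseteq\mathcal{U}_0$, whence $b=\mathcal{L}S_0(t)\chi\in\mathcal{L}\mathcal{U}_0=\mathbb{U}_0$; here it is crucial that $\mathcal{U}_0$ in (\ref{set-family-0}) and $\mathcal{U}_\ep$ in (\ref{set-family-1}) are built over the same time interval $[0,T]$. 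Applying (\ref{robustness}) with the choice $B_\ep=W_\ep$ (so that $M=M(\|W_\ep\|_{X_\ep})$ and $\rho$ are fixed and independent of $\ep$) yields
\[
\|a-b\|_{X_\ep}=\left\|S_\ep(t)(\chi,\psi)-\mathcal{L}S_0(t)\Pi(\chi,\psi)\right\|_{X_\ep}\le M\ep^\rho.
\]
Consequently $\inf_{b'\in\mathbb{U}_0}\|a-b'\|_{X_\ep}\le M\ep^\rho$, and since $a\in\mathbb{U}_\ep$ was arbitrary, taking the supremum gives ${\rm dist}_{X_\ep}(\mathbb{U}_\ep,\mathbb{U}_0)\le M\ep^\rho$, as claimed; the upper-semicontinuity follows because the right-hand side tends to $0$ as $\ep\to0$.

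The only genuine obstacle in this abstract statement is the verification that the competitor $b$ lands in $\mathbb{U}_0$ — this is what forces the bookkeeping convention $W_0=\Pi W_\ep$ and the matching of the two time intervals — together with the preliminary boundedness of $\mathbb{U}_0$. All of the real difficulty is instead relegated to establishing (\ref{robustness}) itself for the concrete pair (Problem (T), Problem (A)), where one must control the difference between the perturbed semiflow and the lifted limit semiflow in the weighted $X_\ep$-norm, uniformly for $t\in[0,T]$; that estimate, not the present abstract deduction, is where the regularity afforded by (\ref{reg-assf-2})--(\ref{reg-assf-3}) is spent.
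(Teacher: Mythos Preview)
Your proof is correct and follows essentially the same route as the paper's: both arguments pick the competitor $b=\mathcal{L}S_0(t)\Pi\alpha$ for $a=S_\ep(t)\alpha$, verify that $b\in\mathbb{U}_0$ via the identification $\Pi W_\ep=W_0$, and then apply the robustness hypothesis (\ref{robustness}) directly. Your version is in fact slightly cleaner: the paper inserts a triangle-inequality step splitting $\|S_\ep(t)\alpha-\mathcal{L}S_0(t)\theta\|_{X_\ep}$ into the robustness term plus $\|\mathcal{L}S_0(t)\Pi\alpha-\mathcal{L}S_0(t)\theta\|_{X_\ep}$, invokes the Lipschitz continuity of $S_0$ and $\mathcal{E}$ to bound the latter, and then sets $\theta=\Pi\alpha$ so that this second term vanishes anyway---so those Lipschitz hypotheses do no real work in the paper's estimate, whereas you correctly locate their only genuine role as ensuring boundedness of $\mathbb{U}_0$ so that the semidistance is well-defined.
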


\begin{proof}
To begin,
\[
dist_{\mathcal{H}_\ep}(\mathbb{U}_\ep,\mathbb{U}_0) =  \sup_{a\in\mathcal{U}_\ep}\inf_{b\in\mathcal{L}\mathcal{U}_0}\|a-b\|_{X_\ep}.
\]
Fix $t\in[0,T]$ and $\alpha\in W_\ep$ so that $a=S_\ep(t)\alpha\in\mathcal{U}_\ep$. Then 
\begin{align}
\inf_{b\in\mathcal{L}\mathcal{U}_0}\|a-b\|_{X_\ep} & = \inf_{\substack{\tau\in [0,T] \\ \theta\in W_0}}\|S_\ep(t)\alpha-\mathcal{L}S_0(\tau)\theta\|_{X_\ep}  \notag \\
& \leq \inf_{\theta\in W_0}\|S_\ep(t)\alpha-\mathcal{L}S_0(t)\theta\|_{X_\ep}.  \notag
\end{align}
Since $S_\ep(t)\alpha=a$, 
\begin{align}
\sup_{\alpha\in W_\ep}\inf_{b\in\mathcal{L}\mathcal{U}_0}\|S_\ep(t)\alpha-b\|_{X_\ep} & \leq \sup_{\alpha\in W_\ep}\inf_{\theta\in W_0}\|S_\ep(t)\alpha-\mathcal{L}S_0(t)\theta\|_{X_\ep}   \notag \\ 
& =dist_{X_\ep}(S_\ep(t)W_\ep,\mathcal{L}S_0(t)W_0)  \notag \\
& \leq \max_{t\in [0,T]}dist_{X_\ep}(S_\ep(t)W_\ep,\mathcal{L}S_0(t)W_0). \notag
\end{align}
Thus, 
\begin{align}
\sup_{t\in [0,T]}\sup_{\alpha\in W_\ep}\inf_{b\in\mathcal{L}\mathcal{U}_0} \|S_\ep(t)\alpha-b\|_{X_\ep} \leq \max_{t\in [0,T]}dist_{X_\ep}(S_\ep(t)W_\ep,\mathcal{L}S_0(t)W_0),  \notag
\end{align}
and 
\begin{align}
\sup_{a\in\mathcal{U}_\ep}\inf_{b\in\mathcal{L}\mathcal{U}_0}\|a-b\|_{X_\ep} & \leq \sup_{t\in [0,T]}\sup_{\alpha\in W_\ep}\inf_{b\in\mathcal{L}\mathcal{U}_0} \|S_\ep(t)\alpha-b\|_{X_\ep}  \notag \\
& \leq \max_{t\in [0,T]}dist_{X_\ep}(S_\ep(t) W_\ep,\mathcal{L}S_0(t)W_0)  \notag \\
& \leq \max_{t\in [0,T]} \sup_{\alpha\in W_\ep}\inf_{\theta\in W_0}\|S_\ep(t)\alpha-\mathcal{L}S_0(t)\theta\|_{X_\ep}.  \notag
\end{align}

The norm is then expanded
\begin{align}
\|S_\ep(t)\alpha-\mathcal{L}S_0(t)\theta\|_{X_\ep} \leq \|S_\ep(t)\alpha & - \mathcal{L}S_0(t)\Pi\alpha\|_{X_\ep}  \notag \\
& + \|\mathcal{L}S_0(t)\Pi\alpha-\mathcal{L}S_0(t)\theta\|_{X_\ep} \label{4-1}
\end{align}
so that by the assumption described in (\ref{robustness}), there is a constant $M>0$ such that for all $t\in[0,T]$ and for all $\alpha\in W_\ep$,
\[
\|S_\ep(t)\alpha-\mathcal{L}S_0(t)\Pi\alpha\|_{X_\ep} \leq M\ep^\rho.
\]
Expand the square of the norm on the right hand side of (\ref{4-1}) to obtain, for $\Pi\alpha=\Pi(\alpha_1,\alpha_2)=\alpha_1\in X_0$ and $\theta\in X_0$,
\begin{equation}\label{triangle-r}
\|\mathcal{L}S_0(t)\Pi\alpha-\mathcal{L}S_0(t)\theta\|^2_{X_\ep} = \|S_0(t)\Pi\alpha-S_0(t)\theta\|^2_{X_0} + \ep\|\mathcal{E}S_0(t)\Pi\alpha - \mathcal{E}S_0(t)\theta\|^2_Y.
\end{equation}
By the local Lipschitz continuity of $\mathcal{E}$ on $X_0$, and by the local Lipschitz continuity of $S_0$ on $X_0$, there is $L>0$, depending on $W_0$, but independent of $\ep$, such that (\ref{triangle-r}) can be estimated by 
\[
\|\mathcal{L}S_0(t)\Pi\alpha-\mathcal{L}S_0(t)\theta\|^2_{X_\ep} \leq L^2(1+\ep)\|\Pi\alpha-\theta\|^2_{X_0}.
\]
Hence, (\ref{4-1}) becomes
\[
\|S_\ep(t)\alpha-\mathcal{L}S_0(t)\theta\|_{X_\ep} \leq M\ep^\rho + L\sqrt{1+\ep}\|\Pi\alpha-\theta\|_{X_0}
\]
and 
\[
\inf_{\theta\in W_0}\|S_\ep(t)\alpha-\mathcal{L}S_0(t)\theta\|_{X_\ep} \le M\ep^\rho + L\sqrt{1+\ep}\inf_{\theta = \Pi\alpha}\|\Pi\alpha-\theta\|_{X_0}.
\]
Since $\Pi\alpha\in\Pi W_\ep=W_0$, then it is possible to choose $\theta\in W_0$ to be $\theta=\Pi\alpha$. Therefore, 
\[
dist_{X_\ep}(\mathbb{U}_\ep,\mathbb{U}_0) = \sup_{\alpha\in W_\ep}\inf_{\theta\in W_0}\|S_\ep(t)\alpha-\mathcal{L}S_0(t)\theta\|_{X_\ep} \leq M\ep^\rho.
\]
This establishes the upper-semicontinuity of the sets $\mathbb{U}_\ep$ in $X_\ep$. 
\end{proof}

\begin{remark}
The upper-semicontinuous result given in Theorem \ref{t:robustness} is reminiscent of robustness results (cf. \cite{GGMP05}) insofar as we obtain explicit control over the semidistance in terms of the perturbation parameter $\ep$. 
\end{remark}

\subsection{The upper-semicontinuity of the family of global attractors for the model problems}

The goal of this section is to show that the assumptions of Theorem \ref{t:robustness} are meet. 
The conclusion is that the family of global attractors for the model problem are upper-semicontinuous.

Concerning the notation of the previous section, here $X_0=\mathcal{H}_0$, $Y=L^2(\Gamma)\times L^2(\Gamma)$, and $X_\ep=X_0\times Y=\mathcal{H}_\ep$. 
Recall that by the continuous dependence estimate (\ref{cont-dep}), $S_0$ is locally Lipschitz continuous on $\mathcal{H}_0$. 
Define the projection $\Pi:\mathcal{H}_\ep\rightarrow\mathcal{H}_0$ by 
\[
\Pi(u,v,\delta,\gamma) = (u,v);
\]
thus, for every subset $E_\ep\subset\mathcal{H}_\ep$, $\Pi E_\ep=E_0\subset\mathcal{H}_0$. Define the canonical extension $\mathcal{E}:\mathcal{H}_0\rightarrow L^2(\Gamma)\times L^2(\Gamma)$ by, for all $(u,v)\in\mathcal{H}_0$ and for all $\ep\in(0,1]$,
\[
\mathcal{E}(u,v)=(\ep u,-v).
\]
Clearly, $\mathcal{E}$ is locally Lipschitz on $\mathcal{H}_0$. Then the lift map, $\mathcal{L}:\mathcal{H}_0\rightarrow\mathcal{H}_\ep$, is defined, for any bounded set $E_0$ in $\mathcal{H}_0$, by
\[
\mathcal{L}E_0:=\{(u,v,\delta,\gamma)\in\mathcal{H}_\ep:(u,v)\in E_0, \delta=\ep u, \gamma = -v \}.
\]
Recall that $v=u_t$ and $\gamma=\delta_t$ in distributions, so the transport-type boundary condition, $\partial_{\bf{n}}u + u_t = 0$, is obtained from the limit problem.

The main result in this section is 

\begin{theorem}  \label{upper}
Assume (\ref{reg-assf-2}), (\ref{assf-2}), and (\ref{reg-assf-3}) hold subject to (\ref{aic2}).
Let $\mathcal{A}_0$ denote the global attractor corresponding to Problem (T) and for each $\ep\in(0,1]$, let $\mathcal{A}_\ep$ denote the global attractor corresponding to Problem (A). 
The family of global attractors $(\mathbb{A}_\ep)_{\ep\in[0,1]}$ in $\mathcal{H}_\ep$ defined by
\[
\mathbb{A}_\ep=\left\{ \begin{array}{ll} \mathcal{A}_\ep & 0<\ep\leq 1 \\ \mathcal{L}\mathcal{A}_0 & \ep=0. \end{array} \right.
\]
is upper-semicontinuous in $\mathcal{H}_\ep$, with explicit control over semi-distances in terms of $\ep$. 
(Note: we are not claiming that $\mathcal{LA}_0$ is a global attractor for Problem (T) in $\mathcal{H}_\ep.$)
\end{theorem}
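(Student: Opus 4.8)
The plan is to verify that the abstract upper-semicontinuity machinery of Theorem~\ref{t:robustness} applies to the concrete family of global attractors, with the identifications $X_0=\mathcal{H}_0$, $Y=L^2(\Gamma)\times L^2(\Gamma)$, $X_\ep=\mathcal{H}_\ep$, the projection $\Pi$, canonical extension $\mathcal{E}(u,v)=(\ep u,-v)$, and lift $\mathcal{L}$ as defined above. The semiflow hypothesis of Theorem~\ref{t:robustness}—that $S_0$ be Lipschitz continuous on $\mathcal{H}_0$, uniformly in $t$ on compact intervals—is already in hand from the estimate (\ref{S0-Lipschitz-continuous}) in Corollary~\ref{sf-r}. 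The remaining work is to establish the \emph{robustness estimate} (\ref{robustness}): for any $T>0$ and any bounded $B_\ep\subset\mathcal{H}_\ep$ there exist $M=M(\|B_\ep\|_{\mathcal{H}_\ep})>0$ and $\rho\in(0,1]$, both independent of $\ep$, with
\[
\|S_\ep(t)(\chi,\psi)-\mathcal{L}S_0(t)\Pi(\chi,\psi)\|_{\mathcal{H}_\ep}\le M\ep^\rho
\quad\text{for all } t\in[0,T],\ (\chi,\psi)\in B_\ep.
\]
Once this holds, I would take $W_0=\mathcal{A}_0$ and $W_\ep=\mathcal{A}_\ep$ (these are bounded, indeed uniformly bounded in $\mathcal{H}_\ep$ by Lemma~\ref{t:a-abs-set}), note that by invariance $\mathcal{A}_\ep=\bigcup_{t\in[0,T]}S_\ep(t)\mathcal{A}_\ep=\mathcal{U}_\ep$ and likewise $\mathcal{A}_0=\mathcal{U}_0$, so that $\mathbb{A}_\ep=\mathbb{U}_\ep$, and conclude directly from Theorem~\ref{t:robustness}.

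The heart of the argument is the robustness estimate, and this is where I would spend the real effort. Writing $\zeta(t)=S_\ep(t)\zeta_0=(u,u_t,\delta,\delta_t)$ for the solution of Problem~(A) with data adjusted via (\ref{aic2}), and $\varphi(t)=S_0(t)\Pi\zeta_0=(\bar u,\bar u_t)$ for the corresponding solution of Problem~(T), I would set the \emph{difference} variables $p:=u-\bar u$ in $\Omega$ and, crucially, $q:=\delta-\bar u|_\Gamma$ on $\Gamma$ (this is precisely the quantity whose smallness encodes that the lift sends $\delta\mapsto\ep u\approx 0$ in the limit and matches the transport condition $\partial_{\bf n}\bar u=-\bar u_t$). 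Subtracting the weak formulation of Problem~(T) from that of Problem~(A)—equivalently, comparing (\ref{abc}) with (\ref{dybc})—the interior equation for $p$ is a damped wave equation with source $f(u)-f(\bar u)$, controllable by local Lipschitz continuity of $f:H^1\to L^2$ over the bounded trajectories, while the boundary discrepancy is governed by the relaxation $\ep\delta_{tt}+\delta_t+\delta=-u_t$ versus $\partial_{\bf n}\bar u=-\bar u_t$. The term $\ep\delta_{tt}$ is the singular perturbation: it is formally $O(\ep)$ but must be absorbed through the $\ep$-weighted norm $\|\cdot\|_{\mathcal{H}_\ep}$, where $\delta_t$ carries the weight $\ep^2$ (equivalently $\gamma$ carries $\ep^2$ in the displayed norm). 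I would then run a Gronwall-type energy estimate on a suitable functional of $(p,p_t,q,q_t)$, using the higher regularity of $\mathcal{A}_0$ (guaranteed by Theorem~\ref{t:gattr-d}(iii), i.e.\ $\mathcal{A}_0\subset\mathcal{D}_0$, so trajectories are strong solutions with $\bar u_{tt}\in L^\infty(L^2)$ and $\bar u_{tt}\in L^2(L^2(\Gamma))$) to bound the extra terms that arise from the mismatch between the acoustic and transport boundary conditions. The outcome should be an estimate of the form $\|\zeta(t)-\mathcal{L}\varphi(t)\|^2_{\mathcal{H}_\ep}\le C(T,R)\,\ep$ on $[0,T]$, giving $\rho=1/2$.

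The \textbf{main obstacle} I anticipate is handling the boundary terms arising from $\ep\delta_{tt}$ and from the initial-data correction (\ref{aic2}) uniformly in $\ep$, without inadvertently picking up an $\ep$-dependent constant that degenerates as $\ep\to 0$. Two points deserve care: first, the initial data for the difference must be shown to be $O(\ep^{1/2})$ in $\mathcal{H}_\ep$, which is exactly why (\ref{aic2}) prescribes $\delta(0)=u_0+\ep\delta_0$ and $\ep\delta_t(0)=\ep\delta_1$—this choice makes $q(0)=\ep\delta_0$ small and the weighted kinetic boundary energy $\ep^2\|\delta_t(0)\|^2_{L^2(\Gamma)}=\ep^2\|\delta_1\|^2_{L^2(\Gamma)}$ controllable, so that $\|\zeta_0-\mathcal{L}\Pi\zeta_0\|_{\mathcal{H}_\ep}=O(\ep)$. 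Second, the energy identity for the difference will contain a term like $\ep\langle\delta_{tt},\cdot\rangle_{L^2(\Gamma)}$ that cannot be estimated pointwise in time but must be integrated by parts in $t$ and absorbed, which is where the strong-solution regularity of trajectories on $\mathcal{A}_0$ becomes indispensable (it is precisely the role flagged in the introduction: regularity of $\mathcal{A}_0$ is ``needed to control the difference in (\ref{robust-intro})''). I expect the technical crux to be showing that the resulting Gronwall constant depends only on $T$ and on $\|\mathcal{A}_\ep\|_{\mathcal{H}_\ep}$—uniformly bounded in $\ep$ by Lemma~\ref{t:a-abs-set}—and on the $\mathcal{D}_0$-bound of $\mathcal{A}_0$, but not on $\ep$ itself, thereby yielding the $\ep$-independent $M$ and $\rho$ required by (\ref{robustness}).
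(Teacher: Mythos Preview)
Your proposal is correct and follows essentially the same route as the paper: verify the robustness estimate (\ref{robustness}) for data on the attractors by an energy/Gronwall argument on the difference of the two solutions, using the $\mathcal{D}_0$-regularity of $\mathcal{A}_0$ to control the singular boundary term, then invoke Theorem~\ref{t:robustness} with $W_\ep=\mathcal{A}_\ep$, $W_0=\mathcal{A}_0$ and invariance. Two minor technical differences are worth flagging. First, rather than working directly with $q=\delta-\bar u|_\Gamma$ (which, incidentally, should read $q=\delta-\ep\bar u|_\Gamma$ to match the lift $\mathcal{E}(u,v)=(\ep u,-v)$), the paper introduces an auxiliary boundary variable $\bar\delta$ satisfying the transport system $\bar\delta_t=-\bar u_t$, $\bar\delta_t=\partial_{\bf n}\bar u$, with lifted initial data; this recasts Problem~(T) in a form parallel to Problem~(A) and yields a clean equation $\ep w_{tt}+w_t+w=-z_t-\bar\delta-\ep\bar\delta_{tt}$ for $w=\delta-\bar\delta$. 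Second, the troublesome term $\ep\langle\bar\delta_{tt},w_t\rangle_{L^2(\Gamma)}$ is not handled by integration by parts in $t$ as you suggest, but by a direct Young estimate followed by the dissipation-integral bound $\int_0^t\|\bar u_{tt}(\tau)\|^2_{L^2(\Gamma)}\,d\tau\le Q(R_0)$ (a separate lemma, proved by differentiating Problem~(T) in time and exploiting $\mathcal{A}_0\subset\mathcal{D}_0$); this is exactly the ``strong-solution regularity'' you anticipated needing, and it delivers $\rho=1/2$.
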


The proof of Theorem \ref{upper} will rely on a dissipation integral on the high-ordered boundary term $\delta_{tt}.$
This bound can be achieved with the following 

\begin{lemma}  \label{utt-bndry}
For all $\varphi_0=(u_0,u_1)\in\mathcal{A}_0$, the solution $\varphi(t)$ of Problem (T) satisfies, for all $t\ge0,$
\begin{equation*}  \label{u-reg-0}
\int_0^t \|u_{tt}(\tau)\|^2_{L^2(\Gamma)}d\tau \le Q(R_0),
\end{equation*}
where $R_0$ is the radius of the absorbing set $\mathcal{B}_0$ in $\mathcal{H}_0$.
\end{lemma}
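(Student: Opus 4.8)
The plan is to derive a differentiated energy identity whose boundary dissipation term is precisely $\|u_{tt}\|^2_{L^2(\Gamma)}$, and then to control the resulting nonlinear forcing \emph{uniformly in time} by exploiting the regularity available on the attractor. First I would record the two facts that make this possible. By Theorem \ref{t:gattr-d}(iii), every trajectory issuing from $\mathcal{A}_0$ is a strong solution and $\mathcal{A}_0$ is bounded in $\mathcal{D}_0$; since $\Omega\subset\mathbb{R}^3$ gives $H^2(\Omega)\hookrightarrow L^\infty(\Omega)$, we have $\|u(t)\|_{L^\infty(\Omega)}\le Q(R_0)$ for all $t\ge0$, and (\ref{reg-assf-2}), integrated once, yields $|f'(s)|\le C(1+|s|^2)$, whence $\|f'(u(t))\|_{L^\infty(\Omega)}\le Q(R_0)$ uniformly in $t$. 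Second, integrating the energy equation (\ref{energy-1}) over $[0,t]$ and using that $\|\varphi\|_{\mathcal{H}_0}$ and $\int_\Omega F(u)$ remain bounded on the (invariant) attractor by (\ref{consf-1}), I obtain the time-uniform dissipation integral
\[
\int_0^t \|u_t(\tau)\|^2\, d\tau \le Q(R_0) \quad\text{for all } t\ge0.
\]

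Next I would differentiate (\ref{pde}) in $t$ to get $u_{ttt}+u_{tt}-\Delta u_t+u_t+f'(u)u_t=0$, and differentiate the boundary condition (\ref{dybc}) to get $\partial_{\mathbf{n}}u_t=-u_{tt}$ on $\Gamma$. Multiplying the differentiated equation by $2u_{tt}$ in $L^2(\Omega)$ and applying Green's formula, the boundary contribution becomes $+2\|u_{tt}\|^2_{L^2(\Gamma)}$ thanks to $\partial_{\mathbf{n}}u_t=-u_{tt}$, so that, with $\Lambda(t):=\|u_{tt}\|^2+\|\nabla u_t\|^2+\|u_t\|^2$,
\[
\frac{d}{dt}\Lambda(t) + 2\|u_{tt}\|^2 + 2\|u_{tt}\|^2_{L^2(\Gamma)} = -2\langle f'(u)u_t,u_{tt}\rangle.
\]
The crucial point is to leave the nonlinear term as a forcing rather than absorbing it into the energy functional (which would instead produce the cubic term $\int_\Omega f''(u)u_t^3\,dx$, whose time integral cannot be controlled by the $L^2$-dissipation alone via interpolation). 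Estimating the right-hand side by Young's inequality and the uniform $L^\infty$-bound on $f'(u)$, namely $|2\langle f'(u)u_t,u_{tt}\rangle|\le \|u_{tt}\|^2 + Q(R_0)\|u_t\|^2$, and absorbing $\|u_{tt}\|^2$ into the left-hand dissipation, I arrive at
\[
\frac{d}{dt}\Lambda(t) + \|u_{tt}\|^2 + 2\|u_{tt}\|^2_{L^2(\Gamma)} \le Q(R_0)\|u_t\|^2.
\]

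Finally I would integrate this inequality over $[0,t]$. Dropping the nonnegative terms $\Lambda(t)$ and $\int_0^t\|u_{tt}\|^2\,d\tau$, bounding $\Lambda(0)\le Q(R_0)$ — the bounds on $\|u_t(0)\|$ and $\|\nabla u_t(0)\|$ follow from $\mathcal{A}_0\subset\mathcal{D}_0$, while $\|u_{tt}(0)\|$ is read off from (\ref{pde}) at $t=0$ using the cubic growth of $f$ and $u_0\in L^\infty(\Omega)$ — and invoking the first dissipation integral, I conclude
\[
2\int_0^t\|u_{tt}(\tau)\|^2_{L^2(\Gamma)}\,d\tau \le \Lambda(0)+Q(R_0)\int_0^t\|u_t(\tau)\|^2\,d\tau \le Q(R_0),
\]
which is the assertion.

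The main obstacle is exactly the uniform-in-time control of the nonlinear forcing; the resolution is the observation that the term $f'(u)u_t$ should be kept as a forcing and handled by the $L^\infty$-bound on $f'(u)$ (valid precisely because $\mathcal{A}_0$ is bounded in $\mathcal{D}_0$) together with absorption into the $u_{tt}$-dissipation, rather than symmetrized into a conservative quantity. A secondary technical point is that the formal differentiation in $t$ must be justified, for instance by carrying out the estimate on time-difference quotients of the strong solutions and passing to the limit, as in \cite{Frigeri10,Gal&Shomberg15}.
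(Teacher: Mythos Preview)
Your argument is correct, and it is in fact cleaner than the route the paper takes. The paper follows the framework of \cite[Lemma 3.16]{Gal&Shomberg15}: it builds the richer functional
\[
\Psi(t)=\|U_t\|^2+\alpha\langle U_t,U\rangle+\|U\|_1^2+\langle f'(u)U,U\rangle,\qquad U:=u_t,
\]
with the nonlinear term absorbed into the conservative part, derives a differential inequality of the form $\dot\Psi+C_1\Psi+C_2\|U_t\|^2_{L^2(\Gamma)}\le C_3(R_0)(\|U\|\Psi+1)$, invokes the Gronwall-type Proposition \ref{GL} together with the weaker dissipation bound $\int_s^t\|u_t\|\,d\tau\le\eta(t-s)+Q_\eta(R)$ to obtain a uniform bound on $\Psi$, and only then integrates the differential inequality to extract the boundary integral.

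Your approach bypasses all of this by exploiting two features that are specific to trajectories on $\mathcal{A}_0$ rather than merely on the absorbing set: the uniform $L^\infty$-bound on $u$ (hence on $f'(u)$) coming from $\mathcal{A}_0\subset\mathcal{D}_0\hookrightarrow L^\infty$, and the genuinely finite dissipation integral $\int_0^\infty\|u_t\|^2\,d\tau\le Q(R_0)$ coming from the energy identity on the invariant attractor. These let you keep $f'(u)u_t$ as a forcing, use the bare functional $\Lambda=\|u_{tt}\|^2+\|u_t\|_1^2$, and integrate once without any Gronwall machinery. The paper's route is more portable---it would still function for data in $\mathcal{B}^1_0$ rather than $\mathcal{A}_0$---whereas yours is tailored to the attractor but noticeably shorter for the statement at hand.
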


\begin{proof}
The proof follows by repeating part of \cite[Proof of Lemma 3.16]{Gal&Shomberg15} with $u$ in place of $w$, $U$ in place of $h$, and $f$ in place of $\psi$.
Let $\varphi_0\in\mathcal{A}_0$ (recall, by Theorem \ref{t:exp-attr-d}, $\mathcal{A}_0\subset\mathcal{D}_0$).
Indeed, we begin by differentiating the equations (\ref{pde}) and (\ref{dybc}) with respect to $t$, and set $U=u_t.$ 
Then $U$ satisfies the equations
\begin{equation}  \label{pde-U}
\left\{ \begin{array}{ll}
U_{tt}+U_t-\Delta U+U+f'(u)U=0 & \text{in}\quad (0,\infty )\times \Omega, \\ 
\partial_{\bf{n}}U=-U_t & \text{on}\quad (0,\infty )\times \Gamma, \\ 
U(0)=u_1,\quad U_t(0)=-u_1+\Delta u_0-u_0-f(u_0) & \text{at}\quad \{0\}\times{\overline{\Omega}}.
\end{array}\right.
\end{equation}
Arguing exactly as in \cite[Proof of Lemma 3.16]{Gal&Shomberg15}, we arrive at the differential inequality (cf. \cite[Equation (3.66)]{Gal&Shomberg15}), which holds for almost all $t\ge0$,
\begin{equation}  \label{u-reg-1}
\frac{d}{dt}\Psi + C_1 \Psi + C_2\|U_t\|_{L^2(\Gamma)}^2 \le C_3(R_0)\left( \|U\| \Psi + 1 \right),
\end{equation}
for positive $C_1, C_2$ and where $C_3(R_0)>0$ depends on the bound for trajectories on $\mathcal{A}_0$ in $\mathcal{D}_0$; i.e., the radius of the absorbing set $\mathcal{B}_0$ given by $R_0$.
The functional given by
\begin{equation*}  \label{diff-w-2-1}
\Psi(t):=\|U_t(t)\|^2+\alpha\langle U_t(t),U(t)\rangle+\|U(t)\|_1^2+\langle f'(u(t))U(t),U(t)\rangle,
\end{equation*}
where $\alpha>0$ is some (small) constant, satisfies, for some constants $C_4,C_5>0$, 
\begin{equation}  \label{diff-equiv}
C_4\|(U(t),U_t(t))\|_{\mathcal{H}_0}^2 \le \Psi(t) \le C_5\|(U(t),U_t(t))\|_{\mathcal{H}_0}^2,
\end{equation}
for all $t\ge0.$
We know from \cite[Lemma 3.14]{Gal&Shomberg15}, that for all $\eta>0$, there exists $Q_\eta(\cdot)\sim\eta^{-1}$, such that, for all $t\ge s>0,$
\[
\int_s^t \|u_t(\tau)\|d\tau \le \eta(t-s)+Q_\eta(R).
\]
Hence, with the aid of the Gronwall type inequality given in Proposition \ref{GL}, we recover the exponential decay property (which in turn, is used to provide the existence of a compact absorbing set in $\mathcal{H}_0$); precisely, for all $t\ge0$ there holds
\begin{equation}  \label{u-reg-2}
\Psi(t)\le C_3(R)\left( \Psi(0)e^{C_1t/2}+1 \right),
\end{equation}
where, with (\ref{diff-equiv}) and (\ref{pde-U})$_3$, 
\begin{equation}  \label{u-reg-3}
\Psi(0) \le C(R).
\end{equation}
Returning to (\ref{u-reg-1}), this time not neglecting the term with $U_t$, we integrate (\ref{u-reg-1}) on $(0,t)$ and apply (\ref{diff-equiv}) and (\ref{u-reg-3}) to produce the desired bound (\ref{u-reg-0}). 
This completes the proof.
\end{proof}

\begin{remark}
The proof of Lemma \ref{utt-bndry} is where we absolutely need the regularity assumptions (\ref{reg-assf-2}) and (\ref{reg-assf-3}).
Otherwise, the existence of global attractors for Problem (T) and Problem (A) under less restrictive assumptions can be shown following the asymptotic compactness method by J. Ball \cite{Ball00,Ball04}.
\end{remark}

The remaining claim establishes the assumption made in equation (\ref{robustness}), but we restrict our attention to the acoustic boundary condition subject to the special initial conditions (\ref{aic2}).
The claim indicates that trajectories on $\mathcal{A}_0$ and $\mathcal{A}_\ep$, with the same initial data, may be estimated, on compact time intervals and in the topology of $\mathcal{H}_\ep$, by a constant depending on the radii of the absorbing sets $\mathcal{B}_\ep$ and by the perturbation parameter $\ep$.

\begin{lemma}  \label{compare}
Let $T>0$. 
There is a constant $\Lambda=\Lambda(R_1)>0$ (cf. (\ref{acoustic-radius})), independent of $\ep$, such that, for all $t\in[0,T]$ and for all $\zeta_0\in \mathcal{A}_\ep$,
\begin{equation}  \label{robust-7}
\|S_\ep(t)\zeta_0 - \mathcal{L}S_0(t)\Pi\zeta_0\|_{\mathcal{H}_\ep} \le \Lambda\ep^{1/2}.
\end{equation}
\end{lemma}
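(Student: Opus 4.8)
The plan is to set up the error between the acoustic flow and the lifted transport flow and to close a single Gronwall estimate for its $\mathcal{H}_\ep$-energy on $[0,T]$. Fix $\zeta_0=(u_0,u_1,\delta_0,\delta_1)\in\mathcal{A}_\ep$ and write $S_\ep(t)\zeta_0=(u,u_t,\delta,\delta_t)$ for the Problem (A) solution and $S_0(t)\Pi\zeta_0=(\bar u,\bar u_t)$ for the Problem (T) solution issued from the matched interior data $(u_0,u_1)$, so that $\mathcal{L}S_0(t)\Pi\zeta_0=(\bar u,\bar u_t,\ep\bar u,-\bar u_t)$. Put $P:=u-\bar u$. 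Subtracting the interior equations, $P$ solves $P_{tt}+P_t-\Delta P+P+(f(u)-f(\bar u))=0$ in $\Omega$, and subtracting the boundary conditions $\partial_{\bf{n}}u=\delta_t$ and $\partial_{\bf{n}}\bar u=-\bar u_t$ gives the inhomogeneous Neumann datum $\partial_{\bf{n}}P=\delta_t+\bar u_t$ on $\Gamma$. Because the interior data agree we have $P(0)=P_t(0)=0$, and because of the prescribed initial condition (\ref{aic2}) the boundary initial mismatch $\delta(0)-\ep\bar u(0)$ is $O(\ep)$. Both $\mathcal{A}_\ep$ (bounded by $R_1$ in $\mathcal{H}_\ep$, cf. (\ref{acoustic-radius})) and $\mathcal{A}_0$ (bounded in $\mathcal{D}_0$, since its trajectories are strong solutions) supply the uniform a priori bounds used below.

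Next I would introduce the $\delta$-slot error $D:=\delta-\ep\bar u$ and compute, from $\ep\delta_{tt}+\delta_t+\delta=-u_t$ and $u_t=P_t+\bar u_t$, that
\[
\ep D_{tt}+D_t+D=-P_t-(1+\ep)\bar u_t-\ep^2\bar u_{tt}-\ep\bar u\quad\text{on }\Gamma,
\]
the last three terms being the residual left by the lift, which is not a genuine acoustic trajectory. With the functional $\Phi:=\|P\|_1^2+\|P_t\|^2+\|D\|_{L^2(\Gamma)}^2+\ep\|D_t\|_{L^2(\Gamma)}^2$, which dominates the squared $\mathcal{H}_\ep$-distance up to an $O(\ep)$ term (the $\gamma$-slot error $\delta_t+\bar u_t=\partial_{\bf{n}}P=D_t+(1+\ep)\bar u_t$ enters $\|\cdot\|_{\mathcal{H}_\ep}$ with weight $\ep^2$), I would multiply the $P$-equation by $2P_t$ in $L^2(\Omega)$ and the $D$-equation by $2D_t$ in $L^2(\Gamma)$ and add. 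The principal boundary coupling $\int_\Gamma P_tD_t$ cancels, leaving
\[
\tfrac{d}{dt}\Phi+2\|P_t\|^2+2\|D_t\|_{L^2(\Gamma)}^2=-2\langle f(u)-f(\bar u),P_t\rangle+2(1+\ep)\langle\bar u_t,P_t-D_t\rangle_{L^2(\Gamma)}-2\ep^2\langle\bar u_{tt},D_t\rangle_{L^2(\Gamma)}-2\ep\langle\bar u,D_t\rangle_{L^2(\Gamma)}.
\]

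The estimates proceed term by term. The nonlinearity is absorbed by the local Lipschitz continuity of $f:H^1(\Omega)\to L^2(\Omega)$ on the uniformly bounded attractors, giving $|\langle f(u)-f(\bar u),P_t\rangle|\le C(R_1)\|P\|_1\|P_t\|\le C\Phi$. The two manifestly $\ep$-weighted residuals $\ep^2\langle\bar u_{tt},D_t\rangle$ and $\ep\langle\bar u,D_t\rangle$ are handled by Young's inequality against the dissipation $\|D_t\|_{L^2(\Gamma)}^2$, leaving forcing of size $\ep^2\|\bar u_{tt}\|_{L^2(\Gamma)}^2$ and $\ep\|\bar u\|_{L^2(\Gamma)}^2$; the first is integrable in time by Lemma \ref{utt-bndry} and the second by the trace bound on $\mathcal{A}_0\subset\mathcal{D}_0$. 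For the remaining term $2(1+\ep)\langle\bar u_t,P_t-D_t\rangle_{L^2(\Gamma)}$ I would integrate by parts in time: over $[0,t]$ this replaces $\langle\bar u_t,P_t-D_t\rangle_{L^2(\Gamma)}$ by a boundary-in-time evaluation $\langle\bar u_t,P-D\rangle_{L^2(\Gamma)}$ together with the time integral of $-\langle\bar u_{tt},P-D\rangle_{L^2(\Gamma)}$. The latter is controlled by $\big(\int_0^t\|\bar u_{tt}\|_{L^2(\Gamma)}^2\big)^{1/2}\big(\int_0^t(\|P\|_1^2+\|D\|_{L^2(\Gamma)}^2)\big)^{1/2}$, whose first factor is finite by Lemma \ref{utt-bndry} and the trace embedding $H^1(\Omega)\hookrightarrow L^2(\Gamma)$. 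Integrating the balance over $[0,t]\subset[0,T]$, using $P(0)=0$ and the $O(\ep)$ initial boundary mismatch to bound $\Phi(0)\le C\ep$, and invoking a Gronwall argument, I expect $\Phi(t)\le\Lambda^2\ep$ on $[0,T]$ with $\Lambda=\Lambda(R_1,T)$ independent of $\ep$; taking square roots and recombining the $\ep^2$-weighted $\gamma$-slot then yields (\ref{robust-7}).

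The main obstacle is precisely the order-one boundary term $\langle\bar u_t,P_t-D_t\rangle_{L^2(\Gamma)}$ generated by the singular nature of the perturbation: the acoustic ``velocity'' is a fast variable and there is no boundary dissipation for $P_t$, so a direct Young estimate against the interior dissipation $\|P_t\|^2$ would only return an $O(1)$ bound, and even the boundary-in-time evaluation $\langle\bar u_t,P-D\rangle_{L^2(\Gamma)}$ must be absorbed with care (either into a modified, still-equivalent energy or through a bootstrap) so as not to reintroduce an $O(1)$ contribution. The device that makes this possible is the time integration by parts, which trades $\bar u_t$ for $\bar u_{tt}$ on $\Gamma$; at that point the dissipation integral of Lemma \ref{utt-bndry}—available exactly because $\mathcal{A}_0$ consists of strong solutions lying in $\mathcal{D}_0$, hence requiring hypotheses (\ref{reg-assf-2}) and (\ref{reg-assf-3})—is what closes the argument, and bookkeeping of the $\ep$-weights fixes the admissible rate. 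The emergence of $\ep^{1/2}$ rather than $\ep$ is the characteristic loss of an energy estimate across a singular boundary layer, entering through the $\ep$-weight carried by $D_t$ and the $O(\ep)$ size of the forcing and initial mismatches.
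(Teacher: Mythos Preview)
Your overall strategy---derive a difference system and close a Gronwall estimate on $[0,T]$---matches the paper, but your choice of boundary error variable $D=\delta-\ep\bar u$ creates two obstacles that the paper's choice avoids.

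First, your claim that $D(0)$ is $O(\ep)$ is not correct. From (\ref{aic2}) one has $\delta(0)=u_0+\ep\delta_0$, while $\ep\bar u(0)=\ep u_0$, so $D(0)=(1-\ep)\,u_0\!\mid_\Gamma+\ep\delta_0$, which is $O(1)$ unless $u_0\!\mid_\Gamma=0$. This already spoils $\Phi(0)\le C\ep$.

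Second, and more seriously, the residual $2(1+\ep)\langle\bar u_t,P_t-D_t\rangle_{L^2(\Gamma)}$ carries no $\ep$-weight, and the time integration by parts you propose does not manufacture one. After IBP the boundary-in-time evaluation $\langle\bar u_t,P-D\rangle_{L^2(\Gamma)}$ and the integral $\int_0^t\langle\bar u_{tt},P-D\rangle_{L^2(\Gamma)}$ are each bounded only by quantities of the form $C\Phi^{1/2}$ or $C\bigl(\int_0^t\Phi\bigr)^{1/2}$ with $\ep$-free constants; Young's inequality then leaves an $O(1)$ additive term, and Gronwall closes only at $\Phi(t)\le C$, not $\Phi(t)\le C\ep$. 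So the $\ep^{1/2}$ rate is lost.

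The paper's device---which you are missing---is to \emph{not} compare $\delta$ directly to the lift slot $\ep\bar u$. Instead it introduces an auxiliary $\bar\delta$ solving the transport system $\bar\delta_t=-\bar u_t=\partial_{\bf n}\bar u$ with $\bar\delta(0)=u_0$, and sets $w=\delta-\bar\delta$. This forces $\partial_{\bf n}z=w_t$ \emph{exactly}, so the boundary coupling cancels with no leftover $(1+\ep)\bar u_t$; the $w$-equation residual becomes $-z_t-\bar\delta-\ep\bar\delta_{tt}$, whose last piece $\ep\bar\delta_{tt}=-\ep\bar u_{tt}$ is handled via Lemma~\ref{utt-bndry} exactly as you anticipated. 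Moreover this choice gives $w(0)=\delta(0)-\bar\delta(0)=(u_0+\ep\delta_0)-u_0=\ep\delta_0$, genuinely $O(\ep)$. In short, the paper kills the order-one boundary forcing at the level of the ansatz rather than trying to trade it away afterwards.
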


\begin{proof}
Let $u$ denote the weak solution of Problem (A) with the given data $\zeta_0=(u_0,u_1,\delta_0,\delta_1)\in \mathcal{A}_\ep$, and let $\bar u$ denote the weak solution of Problem (T) corresponding to the initial data $\Pi\zeta_0=(u_0,u_1)\in \mathcal{A}_0$. 
To compare Problem (T) with Problem (A) in $\mathcal{H}_\ep$, the boundary condition (\ref{dybc}) is rewritten as the system,
\begin{equation}  \label{system-d}
\left\{\begin{array}{l} \bar\delta_{t} = -\bar u_t \\ 
\bar\delta_t = \partial_{\bf{n}}{\bar{u}} \\
\bar\delta(0,\cdot) = u_0.
\end{array}\right.
\end{equation}
Observe that through the definition of the lift map, 
\begin{equation}  \label{extic}
(\bar\delta(0,\cdot),\bar\delta_t(0,\cdot))=\mathcal{E}(u(0,\cdot),u_t(0,\cdot))=(u_0,-u_1).
\end{equation}
 
Let $z=u-\bar u$ and $w=\delta-\bar\delta$; hence, $z$ and $w$ satisfy the system
\begin{equation}\label{z-difference}
\left\{\begin{array}{ll} z_{tt} + z_t - \Delta z + z + f(u) - f(\bar u) = 0 & \text{in} \quad (0,\infty)\times\Omega \\
z(0,\cdot)=0, \quad z_t(0,\cdot)=0 & \text{at} \quad \{0\}\times\Omega \\ 
\ep w_{tt} + w_t + w = -z_t - \bar\delta - \ep\bar\delta_{tt} & \text{on} \quad (0,\infty)\times\Gamma \\
w_t = \partial_{\bf{n}}z & \text{on} \quad (0,\infty)\times\Gamma \\
w(0,\cdot)=\ep\delta_0, \quad w_t(0,\cdot)=\delta_1+u_1 & \text{at} \quad \{0\}\times\Gamma. \end{array}\right.
\end{equation}
(The initial conditions above are obtained by taking the difference between (\ref{aic2}) and (\ref{ic}) adjoined with (\ref{extic}).)
Observe, the function $\bar\delta$ is determined by the solution of the transport equation (\ref{system-d}).
Multiply equation (\ref{z-difference})$_1$ by $2z_t$ in $L^2(\Omega)$ and multiply equation (\ref{z-difference})$_3$ by $2w_t$ in $L^2(\Gamma)$ whereby summing the results, to obtain, for almost all $t\ge0,$
\begin{align}
\frac{d}{dt} & \left\{ \|z\|^2_1 + \|z_t\|^2 + \|w\|^2_{L^2(\Gamma)} + \ep^2\|w_t\|^2_{L^2(\Gamma)} \right\} + 2\|z_t\|^2 + 2\|w_t\|^2_{L^2(\Gamma)}  \notag \\
&  = -2\langle f(u)-f(\bar u),z_t \rangle - 2\langle \bar\delta, w_t \rangle_{L^2(\Gamma)} - 2\ep\langle \bar\delta_{tt}, w_t \rangle_{L^2(\Gamma)}.  \label{robust-3}
\end{align}
The first product on the right-hand side is estimated using the local Lipschitz continuity of $f$, 
\begin{equation}  \label{robust-5}
2|\langle f(u)-f(\bar u),z_t \rangle| \leq C_\Omega\|z\|^2_1 + 2\|z_t\|^2,
\end{equation}
where $C_\Omega$ is due to the continuous embedding $H^1(\Omega)\hookrightarrow L^6(\Omega).$
Estimating the remaining two products on the right-hand side yields, with $0<\ep\le 1,$
\begin{align}
& 2\langle \bar\delta, w_t \rangle_{L^2(\Gamma)} + 2\ep\langle \bar\delta_{tt}, w_t \rangle_{L^2(\Gamma)}   \notag \\ 
& \le 2\|\bar\delta\|_{L^2(\Gamma)} \|w_t\|_{L^2(\Gamma)} + 2\ep\|\bar\delta_{tt}\|_{L^2(\Gamma)}\|w_t\|_{L^2(\Gamma)}  \notag \\
& \le \|\bar\delta\|^2_{L^2(\Gamma)} + (1+\ep)\|w_t\|^2_{L^2(\Gamma)} + \ep\|\bar\delta_{tt}\|^2_{L^2(\Gamma)}.  \notag
\end{align}
Because $\bar\delta$ is obtained from the solution of the transport equation (\ref{system-d}), $\|\bar\delta\|_{L^2(\Gamma)}\le \ep\cdot \|\bar\delta_0\|_{L^2(\Gamma)}$ (observe, the dependence on $\ep$ is earned through the initial condition (\ref{system-d})$_3$ and the very nature of the solution to the transport equation serving as the boundary condition to Problem (T)).
Since the global attractor $\mathcal{A}_0$ is bounded in $\mathcal{B}_0$, we of course know that $\|\bar\delta_0\|^2_{L^2(\Gamma)}$ is bounded, uniformly in $t$ and $\ep\in(0,1]$, by the radius of $\mathcal{B}_0$; i.e., $R_0$. 
For the term $\|\bar\delta_{tt}\|^2_{L^2(\Gamma)}$, differentiating equation (\ref{system-d})$_1$ with respect to $t$ yields,
\[
\bar\delta_{tt} = -\bar u_{tt} \quad\text{on}\quad\Gamma.
\]
A bound on the term $\|\bar u_{tt}\|^2_{L^2(\Gamma)}$ is given in Lemma \ref{utt-bndry} above. 
Thus, there is a constant $C=C(R_0)>0$, independent of $\ep$, such that, for all $t\ge 0$, 
\begin{align}
& 2\langle \bar\delta, w_t \rangle_{L^2(\Gamma)} + 2\ep\langle \bar\delta_{tt}, w_t \rangle_{L^2(\Gamma)}   \notag \\ 
& \le (1+\ep)\|w_t\|^2_{L^2(\Gamma)} + \ep\cdot C(R_0).  \label{robust-4}
\end{align}
Combining (\ref{robust-3})-(\ref{robust-4}), leads to the differential inequality, which holds for almost all $t\ge 0$,
\begin{align}
\frac{d}{dt} & \left\{ \|z\|^2_1 + \|z_t\|^2 + \|w\|^2_{L^2(\Gamma)} + \ep\|w_t\|^2_{L^2(\Gamma)} \right\}   \notag \\ 
& \le C_\Omega \|z\|^2_1 + \ep\|w_t\|^2_{L^2(\Gamma)} + \ep\cdot C(R_0).  \notag
\end{align}
Let $M_2=\max\{C_\Omega,1\}.$ 
Integrating with respect to $t$ in the compact interval $[0,T]$ yields,
\begin{align}
\|z(t)\|^2_1 & + \|z_t(t)\|^2 + \|w(t)\|^2_{L^2(\Gamma)} + \ep\|w_t(t)\|^2_{L^2(\Gamma)}   \notag \\
& \le e^{M_2 T} \left(\|z(0)\|^2_1 + \|z_t(0)\|^2 + \|w(0)\|^2_{L^2(\Gamma)} + \ep \|w_t(0)\|^2_{L^2(\Gamma)}\right) + \ep\cdot C(R_0,T).  \label{robust-6}
\end{align}
Because of the initial conditions given in (\ref{z-difference})$_2$ and (\ref{z-difference})$_5$, we have $z(0)=z_t(0)=0$,
\[
\|w(0)\|^2_{L^2(\Gamma)} = \ep^2\|\delta_0\|^2_{L^2(\Gamma)} \quad \text{and} \quad \ep\|w_t(0)\|_{L^2(\Gamma)} = \ep\|\delta_1+u_0+u_1\|_{L^2(\Gamma)}. 
\] 
Since the initial condition $\zeta_0=(u_0,u_1,\delta_0,\delta_1)$ belongs to the bounded attractor $\mathcal{A}_\ep$, both $\|w(0)\|_{L^2(\Gamma)}\le C(R_0)$ and $\|w_t(0)\|_{L^2(\Gamma)}\le C(R_0)$, for some constant $C=C(R_0)>0$, where $R_0$ is the radius of the bounded absorbing set $\mathcal{B}_\ep.$
Thus, inequality (\ref{robust-6}) can be written as
\begin{align}
\|z(t)\|^2_1 & + \|z_t(t)\|^2 + \|w(t)\|^2_{L^2(\Gamma)} + \ep^2\|w_t(t)\|^2_{L^2(\Gamma)}   \notag \\ 
& \le \ep \cdot C(R_0,T).  \label{final-1}
\end{align}
Therefore, we arrive at
\[
\|S_\ep(t)\zeta_0-\mathcal{L}S_0(t)\Pi\zeta_0\|^2_{\mathcal{H}_\ep} \le \ep \cdot C(R_0,T).
\]
This establishes equation (\ref{robust-7}) and finishes the proof.
\end{proof}

The final proof is a direct application of Theorem \ref{t:robustness} to the model problem; however, Theorem \ref{t:robustness} may actually be applied to any family of sets that are described by (\ref{set-family-0})-(\ref{set-family-2}). 

\begin{proof}[Proof of Theorem \ref{upper}]
Because of the invariance of the global attractors, setting $W_0=\mathcal{A}_0$ in equation (\ref{set-family-0}) and setting $W_\ep=\mathcal{A}_\ep$ in equation (\ref{set-family-1}) produces, respectively, $\mathcal{U}_0=\mathcal{A}_0$ and $\mathcal{U}_\ep=\mathcal{A}_\ep$.
\end{proof}

\begin{remark}
It may be interesting to note that Problem (T) and Problem (A) also admit a global attractor under weaker conditions.
Indeed, we could assume the nonlinear term $f\in C^1(\mathbb{R})$ satisfies the sign condition, 
\begin{equation}  \label{wkassf-2}
\liminf_{|s|\rightarrow\infty}\frac{f(s)}{s}>-1,
\end{equation}
as before, but now the weaker growth condition
\begin{equation}  \label{wkassf-1}
|f'(s)|\leq \ell(1+s^2)
\end{equation}
for some $\ell\geq 0$.
With only these assumptions, it is possible to show that the weak solutions of Problem (T) and Problem (A), viewed only under the assumptions (\ref{wkassf-2}) and (\ref{wkassf-1}), admit a global attractor bounded in $\mathcal{H}_0.$
Additionally, one enjoys the upper-semicontinuity result of the previous section. 
Since the semiflow admits a bounded absorbing set, the existence of a global attractor follows when we establish that the associated semiflows are weakly continuous and asymptotically compact.
Then by the theory of generalized semiflows by J. Ball (cf. \cite{Ball00,Ball04}), it follows that the semiflows $S_\ep$, $\ep\in[0,1]$, admit a global attractor $\mathcal{A}_\ep$ in the phase space $\mathcal{H}_\ep$.
(For more on this, see \cite[Section 4]{Frigeri10}.)
\end{remark}

\appendix
\section{}

In this section we include some useful results utilized by Problem (T) and Problem (A). 
The first result can be found in \cite[Lemma 2.7] {Belleri&Pata01}.

\begin{proposition}  \label{t:diff-ineq-1}
Let $X$ be an arbitrary Banach space, and $Z\subset C([0,\infty);X)$. 
Suppose that there is a functional $E:X\rightarrow\mathbb{R}$ such that, for every $z\in Z$,
\[
\sup_{t\geq 0} E(z(t))\geq -r \ \text{and} \ E(z(0))\leq R
\]
for some $r,R\geq 0$. 
In addition, assume that the map $t\mapsto E(z(t))$ is $C^1([0,\infty))$ for every $z\in Z$ and that for almost all $t\geq 0$, the differential inequality holds
\[
\frac{d}{dt} E(z(t)) + m\|z(t)\|^2_X \leq C,
\]
for some $m>0$, $C\geq 0$, both independent of $z\in Z$. Then, for every $\iota>0$, there exists $t_0\geq 0$, depending on $R$ and $\iota$, such that for every $z\in Z$ and for all $t\geq t_0$,
\[
E(z(t))\leq \sup_{\xi\in X}\{ E(\xi):m\|\xi\|^2_X\leq C+\iota \}.
\]
Furthermore, $t_0=(r+R)/\iota$.
\end{proposition}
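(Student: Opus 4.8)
The plan is to read the quantity on the right-hand side of the conclusion as a fixed \emph{target level}
\[
L := \sup_{\xi\in X}\left\{E(\xi) : m\|\xi\|_X^2 \le C+\iota\right\},
\]
and to show that along every $z\in Z$ the map $t\mapsto E(z(t))$ is driven below $L$ within time $t_0$ and then stays there. Everything rests on one pointwise observation: if $E(z(t))>L$ at some instant, then $z(t)$ cannot belong to the sublevel ball $\{m\|\xi\|_X^2\le C+\iota\}$, for otherwise $z(t)$ would be an admissible $\xi$ and we would have $E(z(t))\le L$ by definition of $L$. Hence $m\|z(t)\|_X^2>C+\iota$, and substituting this into the hypothesized differential inequality gives, for a.a.\ such $t$,
\[
\frac{d}{dt}E(z(t)) \le C - m\|z(t)\|_X^2 < C-(C+\iota) = -\iota .
\]
Thus, \emph{whenever the energy lies above the target level it strictly decreases at rate at least $\iota$}; this single estimate is the engine of the whole argument.

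First I would establish a trapping property: if $E(z(t^*))\le L$ for some $t^*\ge 0$, then $E(z(t))\le L$ for all $t\ge t^*$. This is a standard first-crossing argument using continuity of $t\mapsto E(z(t))$. Were there a later instant $t_2$ with $E(z(t_2))>L$, I would set $t_1:=\sup\{t\in[t^*,t_2]:E(z(t))\le L\}$; continuity forces $E(z(t_1))=L$ while $E(z(\cdot))>L$ on $(t_1,t_2]$, so the decrease estimate applies throughout $(t_1,t_2)$ and yields $E(z(t_2))<E(z(t_1))=L$, a contradiction.

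Next I would bound the entry time into the set $\{E\le L\}$. Suppose, toward a contradiction, that $E(z(t))>L$ for every $t\in[0,t_0]$ with $t_0=(r+R)/\iota$. Then the decrease estimate holds on all of $[0,t_0]$, and integrating together with the hypothesis $E(z(0))\le R$ produces
\[
E(z(t_0)) \le E(z(0)) - \iota\,t_0 \le R - (r+R) = -r .
\]
Since the target level satisfies $L\ge -r$ (the ball $\{m\|\xi\|_X^2\le C+\iota\}$ contains points at which $E$ is at least $-r$, which is precisely the lower bound recorded by $r$ and is also what secures $\sup_{t\ge 0}E(z(t))\ge -r$), we obtain $E(z(t_0))\le -r\le L$, contradicting $E(z(t_0))>L$. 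Therefore there exists $t^*\in[0,t_0]$ with $E(z(t^*))\le L$, and combining this with the trapping property gives $E(z(t))\le L$ for all $t\ge t_0$, uniformly over $z\in Z$.

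The bookkeeping above is routine; the delicate point, and the step I expect to be the main obstacle, is making the explicit constant $t_0=(r+R)/\iota$ emerge and hold \emph{uniformly} over all of $Z$. This forces me to keep $m$, $C$, $r$, and $R$ strictly independent of $z$, and in particular to invoke $L\ge -r$ at exactly the right moment: the decrease mechanism only controls $E(z(t))$ from above, so the sole function of $r$ is to certify that the level $-r$ reached by the integration already sits at or below the target $L$, closing the contradiction. Keeping these two one-sided controls ($E(z(0))\le R$ from above, $L\ge -r$ from below) properly aligned is the crux of obtaining the precise value of $t_0$ rather than merely its existence.
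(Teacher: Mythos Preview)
The paper does not actually prove this proposition; it merely records it in the appendix with the attribution ``can be found in \cite[Lemma 2.7]{Belleri&Pata01}.'' So there is no in-paper argument to compare against, and your write-up stands on its own.

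Your overall strategy is the standard and correct one: whenever $E(z(t))>L$ the differential inequality forces $\frac{d}{dt}E(z(t))<-\iota$, which yields both the trapping property and a linear-in-time upper bound $E(z(t))\le R-\iota t$ on any interval where the energy stays above $L$. The one place that does not close is your justification of $L\ge -r$. From the hypothesis as literally stated, namely $\sup_{t\ge 0}E(z(t))\ge -r$, you cannot conclude that the ball $\{m\|\xi\|_X^2\le C+\iota\}$ contains points with $E\ge -r$; the sup could be realized at $t=0$ (where $E(z(0))\le R$) and tell you nothing about $E$ on the ball. Your parenthetical explanation effectively \emph{assumes} the stronger pointwise bound $E\ge -r$ on $X$ and then observes that this implies the stated sup condition, which is backwards.

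This is almost certainly a transcription issue in the proposition rather than a flaw in your thinking: in the paper's own application (Lemma~\ref{t:a-abs-set}) the bound used is the pointwise estimate $E_\ep(\zeta)\ge C_1\|\zeta\|^2_{\mathcal{H}_\ep}-\kappa_f\ge -\kappa_f$, i.e.\ $\inf_{t\ge 0}E(z(t))\ge -r$ with $r=\kappa_f$. With that hypothesis your argument is complete: any trajectory must eventually hit $\{E\le L\}$ (otherwise $E(z(t))\le R-\iota t\to -\infty$, contradicting the pointwise lower bound), and at the first entry time $t^*$ one has $-r\le E(z(t^*))\le L$, so $L\ge -r$. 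Then your contradiction at $t_0=(r+R)/\iota$ and the trapping step go through exactly as written. I would simply replace the $\sup$ in the hypothesis by the pointwise bound $E(z(t))\ge -r$ for all $t\ge 0$ (equivalently $\inf_{t\ge 0}E(z(t))\ge -r$), note that this is what the cited source and the paper's application actually use, and keep the rest of your proof unchanged.
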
 

The following statement is a frequently used Gr\"{o}nwall-type inequality \cite[Lemma 5]{Pata&Zelik06} or \cite[Lemma 2.2]{Grasselli&Pata02}.

\begin{proposition}  \label{GL}
Let $\Lambda :\mathbb{R}_{+}\rightarrow \mathbb{R}_{+}$ be an absolutely continuous function satisfying 
\begin{equation*}
\frac{d}{dt}\Lambda (t)+2\eta \Lambda (t)\leq h(t)\Lambda(t)+k,
\end{equation*}
where $\eta >0$, $k\geq 0$ and $\int_{s}^{t}h(\tau )d\tau \leq \eta(t-s)+m$, for all $t\geq s\geq 0$ and some $m\geq 0$. Then, for all $t\geq 0$, 
\begin{equation*}
\Lambda (t)\leq \Lambda (0)e^{m}e^{-\eta t}+\frac{ke^{m}}{\eta }.
\end{equation*}
\end{proposition}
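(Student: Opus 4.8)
The plan is to treat the hypothesis as a linear first-order differential inequality and integrate it against an integrating factor tailored to exploit that $h$ is controlled only through its running integral $\int_s^t h(\tau)\,d\tau$, not pointwise. First I would rewrite the inequality in the form
\[
\frac{d}{dt}\Lambda(t) + \bigl(2\eta - h(t)\bigr)\Lambda(t) \le k,
\]
and introduce the integrating factor
\[
\mu(t) := \exp\Bigl(2\eta t - \int_0^t h(\tau)\,d\tau\Bigr),
\]
which is well defined, positive, and absolutely continuous since $h$ is locally integrable (its running integral being finite by hypothesis). Multiplying the inequality by $\mu(t)$ collapses the left-hand side into a total derivative almost everywhere, giving $\tfrac{d}{dt}\bigl(\mu\Lambda\bigr) \le k\mu$, valid a.e.\ because $\mu\Lambda$ is absolutely continuous on compact intervals.

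Next I would integrate this on $[0,t]$, using $\mu(0)=1$, to obtain
\[
\Lambda(t) \le \frac{\Lambda(0)}{\mu(t)} + \frac{k}{\mu(t)}\int_0^t \mu(s)\,ds,
\]
and then estimate the two terms separately via the averaged bound on $h$. For the homogeneous term, taking $s=0$ in the hypothesis yields $\int_0^t h \le \eta t + m$, hence $\mu(t)^{-1} \le e^m e^{-\eta t}$, which produces the decay term $\Lambda(0)e^m e^{-\eta t}$. For the inhomogeneous term I would write
\[
\frac{\mu(s)}{\mu(t)} = \exp\Bigl(-2\eta(t-s) + \int_s^t h(\tau)\,d\tau\Bigr) \le e^m e^{-\eta(t-s)},
\]
applying the hypothesis for general $0 \le s \le t$; integrating this kernel over $s\in[0,t]$ gives at most $e^m/\eta$, so the second term is bounded by $k e^m/\eta$. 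Summing the two estimates yields exactly the claimed bound.

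The conceptual heart of the argument, and the step I would guard most carefully, is that a naive Gr\"onwall estimate is unavailable: the term $h(t)\Lambda(t)$ is not sign-definite and could, instantaneously, overwhelm the dissipative contribution $-2\eta\Lambda(t)$. The hypothesis circumvents this precisely because it bounds the \emph{time-average} of $h$ rather than its instantaneous value, so that the excess growth rate integrates to at most $m$ over any interval and the genuine dissipation rate $2\eta$ absorbs $h$, leaving a net decay rate $\eta$. This absorption is fully encoded in the two inequalities $\mu(t)^{-1}\le e^m e^{-\eta t}$ and $\mu(s)/\mu(t)\le e^m e^{-\eta(t-s)}$; once these are in hand the remainder is a routine integration of an exponential kernel, and no regularity beyond the assumed absolute continuity of $\Lambda$ is needed.
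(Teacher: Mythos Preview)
Your proof is correct and is the standard integrating-factor argument for this Gr\"onwall-type lemma. The paper does not actually supply a proof of this proposition; it is quoted in the appendix with a citation to \cite{Pata&Zelik06} and \cite{Grasselli&Pata02}, and the argument in those references is exactly the one you have written, so there is nothing further to compare.
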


The following result is the so-called transitivity property of exponential attraction from \cite[Theorem 5.1]{FGMZ04}.

\begin{proposition}  \label{t:exp-attr}
Let $(\mathcal{X},d)$ be a metric space and let $S_t$ be a semigroup acting on this space such that 
\[
d(S_t m_1,S_t m_2) \leq C e^{Kt} d(m_1,m_2),
\]
for appropriate constants $C$ and $K$. Assume that there exists three subsets $M_1$,$M_2$,$M_3\subset\mathcal{X}$ such that 
\[
{\rm{dist}}_\mathcal{X}(S_t M_1,M_2) \leq C_1 e^{-\alpha_1 t} \quad\text{and}\quad{\rm{dist}}_\mathcal{X}(S_t M_2,M_3) \leq C_2 e^{-\alpha_2 t}.
\]
Then 
\[
{\rm{dist}}_\mathcal{X}(S_t M_1,M_3) \leq C' e^{-\alpha' t},
\]
where $C'=CC_1+C_2$ and $\alpha'=\frac{\alpha_1\alpha_1}{K+\alpha_1+\alpha_2}$.
\end{proposition}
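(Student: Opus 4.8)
The plan is to interpolate in time through the intermediate set $M_2$, using the semigroup property $S_t = S_{t-\tau}S_\tau$ to split the flow at a well-chosen intermediate time $\tau\in[0,t]$. Fix $m_1\in M_1$ and $t>0$. First I would apply the first attraction estimate at time $\tau$: since $\mathrm{dist}_{\mathcal{X}}$ is defined as an infimum, for any $\epsilon>0$ there is $m_2\in M_2$ with
\[
d(S_\tau m_1, m_2) \le \mathrm{dist}_{\mathcal{X}}(S_\tau m_1, M_2) + \epsilon \le C_1 e^{-\alpha_1\tau} + \epsilon .
\]
Then I would transport this near-hit forward by the remaining time $t-\tau$ via the Lipschitz bound together with the semigroup law,
\[
d(S_t m_1, S_{t-\tau}m_2) = d(S_{t-\tau}S_\tau m_1, S_{t-\tau}m_2) \le C e^{K(t-\tau)}\, d(S_\tau m_1, m_2),
\]
and finally invoke the second attraction estimate to reach $M_3$, namely $\mathrm{dist}_{\mathcal{X}}(S_{t-\tau}m_2, M_3) \le C_2 e^{-\alpha_2(t-\tau)}$.

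Combining these three facts by the triangle inequality and letting $\epsilon\to 0$ gives, for every $\tau\in[0,t]$,
\[
\mathrm{dist}_{\mathcal{X}}(S_t m_1, M_3) \le C C_1\, e^{K(t-\tau)-\alpha_1\tau} + C_2\, e^{-\alpha_2(t-\tau)} .
\]
The two exponentials compete: the first decays in $\tau$ but may grow in $t-\tau$ because of the Lipschitz growth $e^{Kt}$, while the second decays in $t-\tau$. The key step is to balance them by setting $\tau=\theta t$ and choosing $\theta$ so that both exponents equal $-\alpha' t$. Imposing $K(1-\theta)-\alpha_1\theta = -\alpha_2(1-\theta)$ forces $\theta = (K+\alpha_2)/(K+\alpha_1+\alpha_2)\in(0,1)$, so that $\tau\in(0,t)$ as required; a short computation then shows both exponents equal $-\frac{\alpha_1\alpha_2}{K+\alpha_1+\alpha_2}\,t$, identifying $\alpha' = \alpha_1\alpha_2/(K+\alpha_1+\alpha_2)$ (in agreement with the displayed value). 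With this choice the bound collapses to $(CC_1+C_2)e^{-\alpha' t} = C'e^{-\alpha' t}$.

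Since $m_1\in M_1$ was arbitrary and the right-hand side is independent of $m_1$, taking the supremum over $m_1\in M_1$ yields $\mathrm{dist}_{\mathcal{X}}(S_t M_1, M_3)\le C'e^{-\alpha' t}$, which is the claim. The only genuinely delicate point is the time-splitting optimization: a naive composition of the two attraction estimates fails because the Lipschitz factor $Ce^{Kt}$ grows, so no net decay survives unless the intermediate time $\tau$ is chosen to trade the loss incurred in transporting $S_\tau m_1$ forward against the decay gained from the second attraction estimate. Everything else reduces to the triangle inequality and the $\epsilon$-approximation of the infimum in the definition of $\mathrm{dist}_{\mathcal{X}}$.
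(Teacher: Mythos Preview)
Your argument is correct and is the standard proof of the transitivity of exponential attraction: split the trajectory at an intermediate time $\tau=\theta t$, use the first attraction estimate to reach $M_2$, transport forward via the Lipschitz bound, and then apply the second attraction estimate, with $\theta=(K+\alpha_2)/(K+\alpha_1+\alpha_2)$ chosen to equalize the two exponents. The paper itself does not supply a proof of this proposition; it is quoted in the appendix directly from \cite[Theorem 5.1]{FGMZ04}, so there is nothing to compare against beyond confirming that your argument matches the classical one given there. (Note also that the displayed $\alpha'=\frac{\alpha_1\alpha_1}{K+\alpha_1+\alpha_2}$ in the statement is a typographical slip for $\frac{\alpha_1\alpha_2}{K+\alpha_1+\alpha_2}$, which your computation correctly recovers.)
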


\section*{Acknowledgements}

The author would like to thank the anonymous referees for their careful reading of the manuscript and for their very helpful comments and suggestions.

\bigskip


\begin{thebibliography}{10}

\bibitem{Babin&Vishik92}
A.~V. Babin and M.~I. Vishik, \emph{Attractors of evolution equations},
  North-Holland, Amsterdam, 1992.

\bibitem{Ball77}
J.~M. Ball, \emph{Strongly continuous semigroups, weak solutions, and the
  variation of constants formula}, Proc. Amer. Math. Soc. \textbf{63} (1977),
  no.~2, 370--373.

\bibitem{Ball00}
\bysame, \emph{Continuity properties and global attractors of generalized
  semiflows and the {N}avier-{S}tokes equations}, Nonlinear Science \textbf{7}
  (1997), no.~5, 475--502, Corrected version appears in the book {\em
  Mechanics: From Theory to Computation}, Springer-Verlag, New York 447--474,
  2000.

\bibitem{Ball04}
\bysame, \emph{Global attractors for damped semilinear wave equations},
  Discrete Contin. Dyn. Syst. \textbf{10} (2004), no.~2, 31--52.

\bibitem{Beale76}
J.~T. Beale, \emph{Spectral properties of an acoustic boundary condition},
  Indiana Univ. Math. J. \textbf{25} (1976), 895--917.

\bibitem{Beale&Rosencrans74}
J.~T. Beale and S.~I. Rosencrans, \emph{Acoustic boundary conditions}, Bull.
  Amer. Math. Soc. \textbf{80} (1974), 1276--1278.

\bibitem{Morante79}
Aldo Belleni-Morante, \emph{Applied semigroups and evolution equations},
  Clarendon Press, Oxford, 1979.

\bibitem{Belleri&Pata01}
Veronica Belleri and Vittorino Pata, \emph{Attractors for semilinear strongly
  damped wave equations on $\mathbb{R}^3$}, Discrete Contin. Dynam. Systems
  \textbf{7} (2001), no.~4, 719--735.

\bibitem{BGM10}
A.~Bonfoh, M.~Grasselli, and A.~Miranville, \emph{Inertial manifolds for a
  singular perturbation of the {C}ahn-{H}illiard-{G}urtin equation}, Topol.
  Methods Nonlinear Anal. \textbf{35} (2010), 155--185.

\bibitem{Bonfoh11}
Ahmed Bonfoh, \emph{The singular limit dynamics of the phase-field equations},
  Ann. Mat. Pura Appl. (4) \textbf{190} (2011), no.~1, 105--144.

\bibitem{CGG11}
Cecilia Cavaterra, Ciprian Gal, and Maurizio Grasselli, \emph{{C}ahn-{H}illiard
  equations with memory and dynamic boundary conditions}, Asymptot. Anal.
  \textbf{71} (2011), no.~3, 123--162.

\bibitem{CEL02}
Igor Chueshov, Matthias Eller, and Irena Lasiecka, \emph{On the attractor for a
  semilinear wave equation with critical exponent and nonlinear boundary
  dissipation}, Comm. Partial Differential Equations \textbf{27} (2002),
  no.~9-10, 1901--1951.

\bibitem{CEL04-2}
\bysame, \emph{Attractors and their structure for semilinear wave equations
  with nonlinear boundary dissipation}, Bol. Soc. Parana. Mat. \textbf{22}
  (2004), no.~1, 38--57.

\bibitem{CEL04}
\bysame, \emph{Finite dimensionality of the attractor for a semilinear wave
  equation with nonlinear boundary dissipation}, Comm. Partial Differential
  Equations \textbf{29} (2004), no.~11-12, 1847--1876.

\bibitem{CLT09}
Igor Chueshov, Irena Lasiecka, and Daniel Toundykov, \emph{Global attractor for
  a wave equation with nonlinear localized boundary damping and a source term
  of critical exponent}, J. Dyn. Diff. Equat. \textbf{21} (2009), 269--314.

\bibitem{CFL01}
A.~T. Cousin, C.~L. Frota, and N.~A. Larkin, \emph{Global solvability and
  asymptotic behaviour of a hyperbolic problem with acoustic boundary
  conditions}, Funkcialaj Ekvacioj \textbf{44} (2001), 471--485.

\bibitem{EMZ00}
Messoud Efendiev, Alain Miranville, and Sergey Zelik, \emph{Exponential
  attractors for a nonlinear reaction-diffusion systems in $\mathbb{R}^3$}, C.
  R. Acad. Sci. Paris S\'{e}r. I Math. \textbf{330} (2000), no.~8, 713--718.

\bibitem{FGMZ04}
P.~Fabrie, C.~Galusinski, A.~Miranville, and S.~Zelik, \emph{Uniform
  exponential attractors for singularly perturbed damped wave equations},
  Discrete Contin. Dyn. Syst. \textbf{10} (2004), no.~2, 211--238.

\bibitem{Frigeri10}
Sergio Frigeri, \emph{Attractors for semilinear damped wave equations with an
  acoustic boundary condition}, J. Evol. Equ. \textbf{10} (2010), no.~1,
  29--58.

\bibitem{Gal08}
Ciprian~G. Gal, \emph{Robust exponential attractors for a conserved
  {C}ahn-{H}illiard model with singularly perturbed boundary conditions},
  Discrete Contin. Dyn. Syst. \textbf{7} (2008), no.~4, 819--836.

\bibitem{Gal12}
\bysame, \emph{Sharp estimates for the global attractor of scalar
  reaction-diffusion equations with a {W}entzell boundary condition}, J.
  Nonlinear Sci. \textbf{22} (2012), no.~1, 85--106.

\bibitem{GGG03}
Ciprian~G. Gal, Gis\`{e}le~Ruiz Goldstein, and Jerome~A. Goldstein,
  \emph{Oscillatory boundary conditions for acoustic wave equations}, J. Evol.
  Equ. \textbf{3} (2003), 623--635.

\bibitem{Gal&Grasselli12}
Ciprian~G. Gal and M.~Grasselli, \emph{Singular limit of viscous
  {C}ahn-{H}illiard equations with memory and dynamic boundary conditions},
  DCDS-S, to appear.

\bibitem{GGM08-2}
Ciprian~G. Gal, Maurizio Grasselli, and Alain Miranville, \emph{Robust
  exponential attractors for singularly perturbed phase-field equations with
  dynamic boundary conditions}, NoDEA Nonlinear Differential Equations Appl.
  \textbf{15} (2008), no.~4-5, 535--556.

\bibitem{Gal&Shomberg15}
Ciprian~G. Gal and Joseph~L. Shomberg, \emph{Hyperbolic relaxation of reaction
  diffusion equations with dynamic boundary conditions}, Quart. Appl. Math.
  \textbf{73} (2015), no.~1, 93--129.

\bibitem{GGMP05-CH3D}
S.~Gatti, M.~Grasselli, A.~Miranville, and V.~Pata, \emph{Hyperbolic relaxation
  of the viscous {C}ahn-{H}illiard equation in 3{D}}, Math. Models Methods
  Appl. Sci. \textbf{15} (2005), no.~2, 165--198.

\bibitem{GGMP05-CH1D}
\bysame, \emph{On the hyperbolic relaxation of the one-dimensional
  {C}ahn-{H}illiard equation}, J. Math. Anal. Appl. \textbf{312} (2005),
  230--247.

\bibitem{GGMP05}
\bysame, \emph{A construction of a robust family of exponential attractors},
  Proc. Amer. Math. Soc. \textbf{134} (2006), no.~1, 117--127.

\bibitem{GMPZ10}
S.~Gatti, A.~Miranville, V.~Pata, and S.~Zelik, \emph{Continuous families of
  exponential attractors for singularly perturbed equations with memory}, Proc.
  Roy. Soc. Edinburgh Sect. A \textbf{140} (2010), 329--366.

\bibitem{Goldstein85}
Jerome~A. Goldstein, \emph{Semigroups of linear operators and applications},
  Oxford Mathematical Monographs, Oxford University Press, New York, 1985.

\bibitem{Grasselli&Pata02}
Maurizio Grasselli and Vittorino Pata, \emph{On the damped semilinear wave
  equation with critical exponent},  (2002).

\bibitem{Hale&Raugel88}
J.~Hale and G.~Raugel, \emph{Upper semicontinuity of the attractor for a
  singularly perturbed hyperbolic equation}, J. Differential Equations
  \textbf{73} (1988), no.~2, 197--214.

\bibitem{Hale88}
Jack~K. Hale, \emph{Asymptotic behavior of dissipative systems}, Mathematical
  Surveys and Monographs - No. 25, American Mathematical Society, Providence,
  1988.

\bibitem{Milani&Koksch05}
Albert~J. Milani and Norbert~J. Koksch, \emph{An introduction to semiflows},
  Monographs and Surveys in Pure and Applied Mathematics - Volume 134, Chapman
  \& Hall/CRC, Boca Raton, 2005.

\bibitem{MPZ07}
Alain Miranville, Vittorino Pata, and Sergey Zelik, \emph{Exponential
  attractors for singularly perturbed damped wave equations: A simple
  construction}, Asymptot. Anal. \textbf{53} (2007), 1--12.

\bibitem{Miranville&Zelik02}
Alain Miranville and Sergey Zelik, \emph{Robust exponential attractors for
  singularly perturbed phase-field type equations}, Electron. J. Differential
  Equations \textbf{2002} (2002), no.~63, 1--28.

\bibitem{Mora&Morales89-2}
Xavier Mora and Joan Sol\`{a}-Morales, \emph{Inertial manifolds of damped
  semilinear wave equations}, Mod\'{e}lisation math\'{e}matique et analyse
  num\'{e}rique \textbf{23} (1989), no.~3, 489--505.

\bibitem{Mugnolo10}
Delio Mugnolo, \emph{Abstract wave equations with acoustic boundary
  conditions}, Math. Nachr. \textbf{279} (2006), no.~3, 299--318.

\bibitem{Pata&Zelik06}
V.~Pata and S.~Zelik, \emph{Smooth attractors for strongly damped wave
  equations}, Nonlinearity \textbf{19} (2006), no.~7, 1495--1506.

\bibitem{Pazy83}
Amnon Pazy, \emph{Semigroups of linear operators and applications to partial
  differential equations}, Applied Mathematical Sciences - Volume 44,
  Springer-Verlag, New York, 1983.

\bibitem{Segatti06}
Antonio Segatti, \emph{On the hyperbolic relaxation of the {C}ahn-{H}illiard
  equation in 3-d: Approximation and long time behaviour}, Math. Models Methods
  Appl. Sci. \textbf{17} (2007), no.~3, 411--437.

\bibitem{Tanabe79}
Hiroki Tanabe, \emph{Equations of evolution}, Pitman, London, 1979.

\bibitem{Temam88}
Roger Temam, \emph{Infinite-dimensional dynamical systems in mechanics and
  physics}, Applied Mathematical Sciences - Volume 68, Springer-Verlag, New
  York, 1988.

\bibitem{Vicente09}
Andr\'{e} Vicente, \emph{Wave equations with acoustic/memory boundary
  conditions}, Bol. Soc. Paran. Mat. \textbf{27} (2009), no.~1, 29--39.

\bibitem{Wu&Zheng06}
Hao Wu and Songmu Zheng, \emph{Convergence to equilibrium for the damped
  semilinear wave equation with critical exponent and dissipative boundary
  condition}, Quart. Appl. Math. \textbf{64} (2006), no.~1, 167--188.

\bibitem{Zelik04}
Sergey Zelik, \emph{Asymptotic regularity of solutions of singularly perturbed
  damped wave equations with supercritical nonlinearities}, Discrete Contin.
  Dyn. Syst. \textbf{11} (2004), no.~3, 351--392.

\bibitem{Zheng04}
Songmu Zheng, \emph{Nonlinear evolution equations}, Monographs and Surveys in
  Pure and Applied Mathematics - Volume 133, Chapman \& Hall/CRC, Boca Raton,
  2004.

\bibitem{Zheng&Milani05}
Songmu Zheng and Albert Milani, \emph{Global attractors for singular
  perturbations of the {C}ahn-{H}illiard equations}, J. Differential Equations
  \textbf{209} (2005), no.~1, 101--139.

\end{thebibliography}

\providecommand{\bysame}{\leavevmode\hbox to3em{\hrulefill}\thinspace}
\providecommand{\MR}{\relax\ifhmode\unskip\space\fi MR }
\providecommand{\MRhref}[2]{%
  \href{http://www.ams.org/mathscinet-getitem?mr=#1}{#2}
}
\providecommand{\href}[2]{#2}

\end{document}